\documentclass[11pt]{article}

\usepackage{amsmath,amssymb,amsthm}
\usepackage[a4paper,margin=1in]{geometry}
\usepackage{booktabs}
\usepackage{enumitem}
\usepackage{microtype}
\usepackage[title,titletoc]{appendix}
\usepackage[hidelinks]{hyperref}

\newtheorem{theorem}{Theorem}[section]
\newtheorem{proposition}[theorem]{Proposition}
\newtheorem{lemma}[theorem]{Lemma}
\newtheorem{corollary}[theorem]{Corollary}
\newtheorem{conjecture}[theorem]{Conjecture}
\theoremstyle{definition}

\theoremstyle{remark}
\newtheorem{remark}[theorem]{Remark}

\DeclareMathOperator{\tr}{tr}

\DeclareMathOperator{\diag}{diag}
\DeclareMathOperator{\rank}{rank}
\DeclareMathOperator{\Res}{Res}
\DeclareMathOperator{\pp}{pp}

\newcommand{\R}{\mathbb{R}}

\newcommand{\norm}[1]{\lVert #1 \rVert}
\newcommand{\M}{\mathcal M}
\newcommand{\one}{\mathbf 1}
\newcommand{\ph}{\varphi}
\allowdisplaybreaks[1]
\hypersetup{pdftitle={The Converse Problem for the Morley Tetrahedron},pdfauthor={Quang Hung Tran}}

\title{The Converse Problem for the Morley Tetrahedron:\\Counterexamples, Conjectures, and Partial Results}
\author{Quang Hung Tran}
\date{}

\begin{document}
\maketitle

\begin{abstract}
An earlier paper in Acta Mathematica Hungarica showed that the Morley construction preserves equality of opposite edge pairs and proposed two converse conjectures. We disprove both. A nonisosceles tetrahedron $T_1$ and an isosceles but nonregular tetrahedron $T_2$ have regular Morley tetrahedra, and an analytic curve of nonisosceles tetrahedra has isosceles Morley tetrahedra. The tetrahedron $T_2$ has edges $AB=CD=1$ and $AC=AD=BC=BD=\sqrt{(21+4\sqrt6)/45}$. Both examples have four equal cross edges. We conjecture that every tetrahedron with a regular Morley tetrahedron has this property, and prove it whenever the original tetrahedron has a nontrivial symmetry. Within the class with four equal cross edges, only the regular tetrahedron, $T_1$ and $T_2$ have regular Morley tetrahedra. The sextic defining $T_1$ has Galois group $S_6$, so $T_1$ cannot be expressed by radicals. The proof-critical computer-assisted checks use exact rational arithmetic.
\end{abstract}

\noindent
\textbf{Keywords.} Morley's theorem, Morley tetrahedron, isosceles tetrahedron, disphenoid, dihedral angle, reflection symmetry, interval arithmetic.

\medskip
\noindent
\textbf{2020 Mathematics Subject Classification.}
Primary 51M04; Secondary 51M20, 12F10, 65G30.

\tableofcontents

\section{Introduction}

Morley's trisector theorem states that the intersections of adjacent angle trisectors of a triangle form an equilateral triangle. The theorem goes back to Morley \cite{Morley}; see also Child \cite{Child}, Coxeter and Greitzer \cite{CoxeterGreitzer}, Oakley and Baker \cite{OakleyBaker}, Hashimoto \cite{Hashimoto}, and Grinberg and Orhon \cite{GrinbergOrhon} for various proofs and viewpoints. Several extensions of Morley's configuration have also been investigated; see, for example, \cite{Braude,DergiadesTran}. A recent synthetic proof is given in \cite{TranSynthetic}.

A three dimensional analogue was considered by Svrtan and Veljan \cite{SvrtanVeljan}. In \cite{TranActa} the author defined a Morley tetrahedron by trisecting the six dihedral angles of a tetrahedron.

We call a tetrahedron $ABCD$ \emph{isosceles} when its three pairs of opposite edges have equal lengths,
\[
AB=CD,\; AC=BD,\; AD=BC;
\]
regular tetrahedra are included. Definition~1 of \cite{TranActa} excluded regular tetrahedra from this terminology, but the proof of its direct theorem establishes the equalities of opposite edge pairs. We use that edge-equality statement here. Such tetrahedra are also called \emph{equifacial tetrahedra} or \emph{disphenoids}; see \cite{BrandtsKrizek,HajjaWalker,Leech}. Related tetrahedra with congruent pairs of facets are studied in \cite{Klain,TranVolume}.

\medskip

\noindent
\textbf{The direct theorem, in edge-equality form \cite{TranActa}.} If $ABCD$ is isosceles, then its Morley tetrahedron is isosceles.

\medskip

In \cite{TranActa} the author also proposed two converse statements.

\begin{conjecture}[The original converse conjecture]
\label{conj:old1} If the Morley tetrahedron of $ABCD$ is regular, respectively isosceles, then $ABCD$ is regular, respectively isosceles.
\end{conjecture}

Theorem 6 of \cite{TranActa} gives a more general construction, in which the trisecting planes are replaced by suitable $p$ secting planes depending on parameters $e,f,g>1$. The corresponding converse was proposed as well.

\begin{conjecture}[The original $p$ sector converse]
\label{conj:old2} The converse statement analogous to Conjecture~\ref{conj:old1} holds for the generalized $p$ sector tetrahedron of \cite[Theorem 6]{TranActa}.
\end{conjecture}

\subsection*{Results}

The paper is divided into three parts.

Part~\ref{part:counterexamples} contains the counterexamples. Theorem~\ref{thm:T1} gives a nonisosceles tetrahedron $T_1$ with a regular Morley tetrahedron, and Theorem~\ref{thm:T2} gives an isosceles one, $T_2$, in closed form. Theorem~\ref{thm:curve} gives a curve of nonisosceles tetrahedra whose Morley tetrahedra are isosceles but not regular. The generalized $p$ sector construction of \cite{TranActa} includes ordinary trisection at $e=f=g=3$, so both converse conjectures fail.

In Part~\ref{part:conjectures} we look for a statement that could replace them. Both $T_1$ and $T_2$ have a pair of opposite edges, say $AB$ and $CD$, such that the four cross edges are equal. We conjecture that every tetrahedron with a regular Morley tetrahedron has this property (Conjecture~\ref{conj:four}), and we discuss several variants.

Part~\ref{part:results} collects what we can prove. Suppose that the Morley tetrahedron of $ABCD$ is regular. If $ABCD$ is isosceles, or has two pairs of equal opposite edges, or has a reflection symmetry, then it has four equal cross edges (Theorems~\ref{thm:disphenoid}, \ref{thm:twopairs} and~\ref{thm:reflection}). If it has four equal cross edges, it is similar to the regular tetrahedron, to $T_1$ or to $T_2$ (Theorem~\ref{sym:thm:classification}). Together these results show that a tetrahedron with a nontrivial symmetry and a regular Morley tetrahedron is one of these three (Corollary~\ref{cor:symmetric}). A counterexample to Conjecture~\ref{conj:four}, if there is one, therefore has no symmetry at all. We also show that $T_1$ cannot be expressed by radicals (Proposition~\ref{prop:T1galois}) and that the three known solutions are isolated (Theorem~\ref{thm:localregular} and Section~\ref{sec:intervals}). The general case remains open. Possible approaches and open problems are discussed in Sections~\ref{tet:directions} and~\ref{sec:openproblems}.

\part{Counterexamples to the Acta converse conjectures}
\label{part:counterexamples}

We recall the construction in Section~\ref{sec:morley} and write the regularity of the Morley tetrahedron as a system of polynomial equations in Section~\ref{sec:gram}. Both counterexamples lie in a family of tetrahedra with two reflection symmetries, introduced in Section~\ref{sec:family}. The counterexamples are stated in Section~\ref{sec:counterexamples}, and the computations they rely on are given in Section~\ref{sec:intervals}.

\section{The Morley tetrahedron}\label{sec:morley}

Let $ABCD$ be a nondegenerate tetrahedron. For an edge $XY$, the two faces containing $XY$ form a dihedral angle. If $Z$ is one of the two vertices not on $XY$, we denote by
\[
(XY)_Z
\]
the trisecting plane through $XY$ which is closer to $Z$. Equivalently, if $\theta_{XY}$ is the interior dihedral angle along $XY$, then $(XY)_Z$ makes angle $\theta_{XY}/3$ with the face containing $XY$ and $Z$.

Following \cite{TranActa}, define the Morley tetrahedron
\[
\M(ABCD)=A'B'C'D'
\]
by
\begin{align}
A'&=(BC)_D\cap(CD)_B\cap(DB)_C, \notag\\
B'&=(AC)_D\cap(CD)_A\cap(DA)_C, \notag\\
C'&=(AB)_D\cap(BD)_A\cap(DA)_B, \notag\\
D'&=(AB)_C\cap(BC)_A\cap(CA)_B. \label{eq:morleydefinition}
\end{align}
Equivalently, $A'$ is the intersection of the three trisecting planes attached to the three edges of the face $BCD$ and lying closer to that face, and similarly for the other three vertices.

\begin{lemma}
\label{lem:equivariance} The Morley construction commutes with Euclidean isometries and with relabellings of the four vertices.
\end{lemma}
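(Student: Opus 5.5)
The plan is to argue directly from the definition \eqref{eq:morleydefinition}. The only ingredients are that the trisecting planes $(XY)_Z$ are determined intrinsically by the tetrahedron, and that the labelling of $\M(ABCD)$ is canonical: the Morley vertex attached to a vertex $V$ is built from the face opposite $V$. We treat the two parts of the statement separately.

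\emph{Isometries.} Let $g$ be a Euclidean isometry, possibly orientation reversing. Then $g$ maps lines to lines and planes to planes, and it preserves angles between planes. So $g$ maps the two faces through $XY$ to the two faces through $g(X)g(Y)$, and it preserves the interior dihedral angle: $\theta_{g(X)g(Y)}=\theta_{XY}$. Being interior is also preserved, because $g$ maps the tetrahedron, a convex body, onto the image tetrahedron. Hence the plane through $XY$ making angle $\theta_{XY}/3$ with the face $XYZ$, inside the dihedral wedge, is mapped to the plane through $g(X)g(Y)$ making the same angle with the face $g(X)g(Y)g(Z)$. In symbols, $g\bigl((XY)_Z\bigr)=(g(X)g(Y))_{g(Z)}$. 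Since $g$ is a bijection, it commutes with intersections of planes. Applying this to each line of \eqref{eq:morleydefinition} gives $g(A')=g(A)'$, and similarly for $B',C',D'$. Therefore $\M(g(A)g(B)g(C)g(D))=g(\M(ABCD))$, with vertices corresponding.

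\emph{Relabellings.} Take $\sigma\in S_4$ acting on the labels. Note that the plane $(XY)_Z$ depends only on the unordered edge $\{X,Y\}$ and on the vertex $Z$, so $(XY)_Z=(YX)_Z$. Next rewrite \eqref{eq:morleydefinition} uniformly: for a vertex $V$ with opposite face $\{P,Q,R\}$, define $V'=(PQ)_R\cap(QR)_P\cap(RP)_Q$. One checks that each of the four lines of \eqref{eq:morleydefinition} has exactly this form. For instance, for $C'$ the face is $\{A,B,D\}$, and $(DA)_B=(AD)_B$. This formula is symmetric under permutations of $\{P,Q,R\}$, because a permutation merely reorders the three planes being intersected. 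Hence $V'$ depends only on the set of the other three vertices. So relabelling the vertices by $\sigma$ relabels the Morley vertices by the same $\sigma$: the Morley vertex attached to $\sigma(V)$ is $V'$.

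The main point needing care is not a computation but well-definedness. We have to confirm that "interior dihedral angle" and "closer to $Z$" are intrinsic notions, so that they are preserved under reflections as well as rotations, and that the three planes in each line of \eqref{eq:morleydefinition} meet in a single point. The paper implicitly assumes the latter, as in \cite{TranActa}. Under that assumption, equivariance holds wherever the construction is defined, and the argument above is complete.
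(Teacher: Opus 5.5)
Your proposal is correct and follows the same route as the paper. Isometries preserve dihedral angles, trisecting planes, the choice of the trisector nearer a given face, and intersections; and writing each Morley vertex as the intersection of the three trisectors attached to the edges of the opposite face makes the relabelling part evident. One small correction: the paper does not merely assume that the three planes are concurrent, since Section~\ref{sec:gram} derives this from \eqref{eq:distances}, and in any case concurrency is not needed for equivariance, because a bijection commutes with intersections.
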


\begin{proof}
The construction uses only dihedral angles, their trisecting planes, the choice of the trisector closer to a specified face, and intersections of planes. All these operations commute with Euclidean isometries.

The description of a Morley vertex as the intersection of the three trisectors attached to the three edges of the opposite face is symmetric under relabelling.
\end{proof}

\section{The Gram matrix formulation}
\label{sec:gram}

Write the vertices as $A_1,A_2,A_3,A_4$, and let $F_i$ be the face opposite $A_i$. Denote its area by $S_i$, its inward unit normal by $n_i$, and the volume of the tetrahedron by $V$. If $\theta_{ij}$ is the interior dihedral angle between $F_i$ and $F_j$, set
\[
q_{ii}=1,\; q_{ij}=2\cos(\theta_{ij}/3)\;(i\ne j),
\; K=(q_{ij}),\; \ph(x)=\frac{3x-x^3}{2}.
\]
Thus $1<q_{ij}<2$ for $i\ne j$. Notice that $\theta_{ij}$ is the angle along the edge complementary to $A_iA_j$. The normal Gram matrix is
\begin{equation}
H=(n_i\cdot n_j)=\ph[K]=\frac{3K-K^{\circ3}}2,
\label{eq:normalgram}
\end{equation}
where the brackets and the superscript $\circ3$ indicate entrywise operations. The equilibrium of the face area vectors gives
\begin{equation}
H\succeq0,\; \rank H=3,\; HS=0,
\; S=(S_1,S_2,S_3,S_4)^T>0.
\label{eq:positivekernel}
\end{equation}

Let $\ell_j(X)$ be the inward signed distance to $F_j$, and let $M_i$ be the Morley vertex corresponding to $F_i$. The sine rule in a normal section gives
\begin{equation}
\ell_j(M_i)=t_iq_{ij},\;
t_i=\ell_i(M_i)=\frac{3V}{(KS)_i}>0.
\label{eq:distances}
\end{equation}
The distance ratio is $\sin(2\theta/3)/\sin(\theta/3)=2\cos(\theta/3)$. The second equality follows from the identity
\[
\sum_j S_j\ell_j(X)=3V.
\]
Conversely, the positive numbers in \eqref{eq:distances} are the distances of the interior point with barycentric coordinates $S_jt_iq_{ij}/(3V)$ to the faces. In particular, the three trisecting planes defining $M_i$ do meet in one point.

\begin{lemma}
\label{lem:negative}
The matrix $K$ is strictly negative definite on $\one^\perp$, where $\one=(1,1,1,1)^T$. Its inertia is $(1,3)$; in particular, it is invertible and $\det K<0$.
\end{lemma}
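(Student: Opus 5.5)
The plan is a deformation argument. It reduces the statement to a single nondegeneracy fact, which I would then try to prove with Schoenberg's criterion.

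\emph{Reduction of the inertia claim.} First, the inertia statement follows from the definiteness statement. Every entry of $K$ is positive, so $\one^TK\one>0$. If $K$ is negative definite on the $3$-dimensional subspace $\one^\perp$, the Courant--Fischer principle gives at least three negative eigenvalues. The vector $\one$ gives at least one positive eigenvalue. Hence the inertia is $(1,3)$, $K$ is invertible, and $\det K<0$. It therefore suffices to prove that the quadratic form $x\mapsto x^TKx$ is negative definite on $\one^\perp$.

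\emph{Rewriting the form.} Put $J=\one\one^T$ and $P=K-J$. Then $P$ has zero diagonal and off-diagonal entries $p_{ij}=q_{ij}-1\in(0,1)$. For $x\in\one^\perp$ we have $x^TKx=x^TPx$. By Schoenberg's theorem, a zero-diagonal symmetric matrix $P$ is negative definite on $\one^\perp$ exactly when the $p_{ij}$ are the squared mutual distances of four affinely independent points in $\R^3$. So the goal is to show that
\[
d_{ij}=\sqrt{2\cos(\theta_{ij}/3)-1}
\]
are the edge lengths of a nondegenerate tetrahedron. Concretely, this means checking the facial triangle inequalities and the positivity of the Cayley--Menger determinant.

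\emph{Homotopy fallback.} I would pursue this in parallel with a continuity argument, since that only needs nondegeneracy. The space of nondegenerate tetrahedra up to similarity is connected, and $K$ depends continuously on the tetrahedron. The signature of the form restricted to $\one^\perp$ can change along a path only if that restricted form becomes singular. This happens precisely when there is some $x\in\one^\perp\setminus\{0\}$ with $Kx\in\operatorname{span}(\one)$.

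At the regular tetrahedron all the $q_{ij}$ equal a common value $q\in(1,2)$, so $K=(1-q)I+qJ$. Its eigenvalue on $\one^\perp$ is $1-q<0$. Hence the form is negative definite there, and it stays so everywhere once nondegeneracy is known.

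\emph{The main obstacle.} In either route the hard step is the same: excluding a degenerate direction using only the constraints on the dihedral angles. The available constraints are $H=\ph[K]\succeq0$ of rank $3$ with a positive kernel vector $S$, as in \eqref{eq:positivekernel}, together with $0<\theta_{ij}<\pi$.

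For the Schoenberg route I would first verify the triangle inequalities $d_{ij}+d_{jk}>d_{ik}$ on each facial triple. At a vertex, the three dihedral angles there satisfy $\theta_{ij}+\theta_{jk}+\theta_{ik}>\pi$, and the map $\alpha\mapsto\sqrt{2\cos\alpha-1}$ is concave and decreasing on $(0,\pi/3)$; I would try to combine these two facts.

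The Cayley--Menger condition I expect to be the real difficulty. The first thing I would try is to express the Cayley--Menger determinant of the $d_{ij}^2$ through the entries of $H$, using $\ph(q)=-\cos\theta$. Then I would use $\det H=0$, together with the positivity of the $3\times3$ principal minors of $H$, to force the correct sign.
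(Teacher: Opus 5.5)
\textbf{There is a genuine gap: the central step is never proved.} Your reduction of the inertia and determinant claims to negative definiteness on $\one^\perp$ is correct; it is exactly how the paper finishes. Your Schoenberg reformulation is also a true equivalence. Neither route, however, establishes the definiteness itself:
\begin{itemize}
\item The homotopy route needs nondegeneracy of the restricted form at every tetrahedron. Together with the sign at the regular tetrahedron, that is just a restatement of the lemma, so it gains nothing.
\item The Schoenberg route needs the Cayley--Menger sign. You yourself call this the real difficulty and only sketch it as something to try. It is not clear how it would go. The $q_{ij}$ are roots of the cubics $\ph(q)=H_{ij}$, so the Cayley--Menger determinant of the $q_{ij}-1$ is not a polynomial in the entries of $H$. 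It is not evident that $\det H=0$ together with positive $3\times3$ minors of $H$ forces any sign on it.
\end{itemize}
As written, the proposal is a plan with the key step missing.

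\textbf{The missing idea is a positivity property of the function linking $H$ and $K$.} Put $f(h)=2\cos\bigl(\tfrac13\arccos(-h)\bigr)$. Then $K=f[H]$ entrywise, including the diagonal since $f(1)=1$.
\begin{itemize}
\item One has $f(h)=\sqrt3-\sum_{m\ge1}c_mh^m$ with all $c_m>0$ and $c_1=1/3$. To see this, substitute $y=\sqrt3-f(h)$ into $\ph(f(h))=h$ to get $h=3y(1-y/\sqrt3)(1-y/(2\sqrt3))$, and apply Lagrange inversion.
\item The series converges absolutely on $[-1,1]$, with $\sum_m c_m=\sqrt3-1$.
\item Hence $K=\sqrt3\,\one\one^T-\sum_m c_mH^{\circ m}$, and each Hadamard power $H^{\circ m}$ is positive semidefinite by the Schur product theorem.
\item For $0\ne z\perp\one$,
\[ z^TKz=-\sum_{m\ge1}c_m\,z^TH^{\circ m}z\le-\tfrac13\,z^THz<0. \]
The last inequality holds because $\ker H=\R S$ with $S>0$ contains no nonzero vector orthogonal to $\one$.
\end{itemize}
In your language, this is exactly the embedding you were looking for. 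The identity $q_{ij}-1=\tfrac12\sum_m c_m\norm{n_i^{\otimes m}-n_j^{\otimes m}}^2$ exhibits $q_{ij}-1$ as a squared Hilbert-space distance. The $m=1$ term, built from the affinely independent normals $n_i$, gives strictness. So the realizability comes from the signs of the Taylor coefficients, not from facial triangle inequalities and a Cayley--Menger computation.
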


\begin{proof}
Define $f(h)=2\cos(\arccos(-h)/3)$. We claim that
\[
f(h)=\sqrt3-\sum_{m\ge1}c_mh^m,\;
c_m>0,\; c_1=\frac13,
\]
with absolute convergence on $[-1,1]$. Put $y=\sqrt3-f(h)$. The equation $\ph(f(h))=h$ becomes
\[
h=3y\left(1-\frac{y}{\sqrt3}\right)
       \left(1-\frac{y}{2\sqrt3}\right).
\]
Lagrange inversion gives
\[
c_m=\frac{1}{m3^m}[u^{m-1}]
\left(1-\frac{u}{\sqrt3}\right)^{-m}
\left(1-\frac{u}{2\sqrt3}\right)^{-m}>0.
\]
The branch is analytic for $|h|<1$, since the only possible finite branch values of the cubic are $h=\pm1$. Letting $h\uparrow1$ gives $\sum_m c_m=\sqrt3-1$, which proves the asserted endpoint convergence.

Each $H^{\circ m}$ is the Gram matrix of the tensors $n_i^{\otimes m}$ and is positive semidefinite. For $0\ne z\perp\one$, therefore,
\[
z^TKz=-\sum_{m\ge1}c_mz^TH^{\circ m}z
\le-\frac13z^THz<0.
\]
The last inequality follows from $\ker H=\R S$ and $S>0$. Finally, $\one^TK\one>0$, so the remaining eigenvalue is positive.
\end{proof}

\begin{proposition}[Regularity equations]
\label{prop:regularity}
If the Morley tetrahedron is regular with edge length $L$, set
\[
d_i=\frac{t_i}{\sqrt2L},\; d=(d_i)^T,
\; D=\diag(d_1,d_2,d_3,d_4).
\]
Then
\begin{equation}
H=K(4D^2-dd^T)K.
\label{eq:regularity}
\end{equation}
Conversely, suppose $K$ is symmetric with $q_{ii}=1$, $1<q_{ij}<2$ for $i\ne j$, and satisfies \eqref{eq:normalgram}, \eqref{eq:positivekernel} and \eqref{eq:regularity} for some $d>0$. The tetrahedron determined by $H$ has regular Morley tetrahedron.
\end{proposition}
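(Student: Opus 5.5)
The plan is to express the Morley vertices through the face normals and to read \eqref{eq:regularity} as the statement that the Gram matrix of the Morley vertices is that of a regular tetrahedron. Place the origin anywhere. Let $N$ be the $3\times4$ matrix with columns $n_j$, so $H=N^TN$, and write $\ell_j(X)=n_j\cdot X+c_j$. Let $T=\diag(t_1,\dots,t_4)$. Then \eqref{eq:distances} says that the matrix $\Lambda=(\ell_j(M_i))_{i,j}$ equals $TK$. If $P$ is the $3\times4$ matrix with columns $M_i$, this reads
\[
P^TN+\one c^T=TK .
\]

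\emph{Direct direction.} Translate so that the centroid of $M_1,\dots,M_4$ is the origin. For a regular tetrahedron of edge $L$ centred at the origin we have
\[
G:=P^TP=\tfrac{L^2}{8}(4I-J)\quad\text{and}\quad PP^T=\tfrac{L^2}{2}I_3 ,
\]
where $J=\one\one^T$; the second identity holds because the vertices form a tight frame. Hence $N=\tfrac{2}{L^2}PP^TN=\tfrac2{L^2}P(TK-\one c^T)$. Since $P\one=0$, the $\one c^T$ term disappears, and
\[
H=N^TN=\tfrac{4}{L^4}\,KT\,G\,TK=\tfrac{1}{2L^2}\,K(4T^2-tt^T)K .
\]
This is exactly \eqref{eq:regularity} with $d=t/(\sqrt2L)$. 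This direction is routine.

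\emph{Converse: building the tetrahedron.} A positive semidefinite rank-$3$ matrix $H$ with $HS=0$ and $S>0$ is the Gram matrix of four unit vectors $n_i$ with $\sum S_in_i=0$. By Minkowski's theorem these are the inward normals and face areas of a tetrahedron, unique up to similarity. Its dihedral angles satisfy $\cos\theta_{ij}=-n_i\cdot n_j$. If $x=2\cos\alpha$, then $\ph(x)=-\cos3\alpha$. Moreover, $\ph$ is strictly decreasing on $(1,2)$. Therefore the only solution of $\ph(q)=n_i\cdot n_j$ in $(1,2)$ is $q=2\cos(\theta_{ij}/3)$, so the given $K$ is the $K$ of this tetrahedron. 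Consequently \eqref{eq:distances} holds with $t_i=3V/(KS)_i$.

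\emph{Converse: identifying $d$.} This is the step I expect to need care: the $d$ appearing in \eqref{eq:regularity} is arbitrary and must be matched with $t$. From $HS=0$ and the invertibility of $K$ (Lemma~\ref{lem:negative}) we get $(4D^2-dd^T)KS=0$, that is,
\[
4d_i(KS)_i=d_i\,(d^TKS)\quad\text{for every } i .
\]
Since $d_i>0$, each $(KS)_i$ equals $d^TKS/4$; in particular all $(KS)_i$ are equal. Hence $t_i=3V/(KS)_i$ is the same for every $i$. Because $d$ is not otherwise determined by \eqref{eq:regularity}, I expect this constraint to force $d$ to be constant as well, so that $d$ is a positive multiple of $t$. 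This is the point I would check first, since the whole converse rests on it. Granting it, define $L>0$ by $t=\sqrt2L\,d$, so $T=\sqrt2LD$.

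\emph{Converse: the edge lengths.} Fix $i\ne k$ and set $v=M_i-M_k$, so that $v^TN=(e_i-e_k)^TTK$. Choose $y$ with
\[
y^TK=\tfrac{\sqrt2L}{4}(e_i-e_k)^TD^{-1}.
\]
Using $D^{-1}d=\one$ and $\one^T(e_i-e_k)=0$, we compute
\[
y^TH=y^TK(4D^2-dd^T)K=\sqrt2L\,(e_i-e_k)^TDK=(e_i-e_k)^TTK=v^TN .
\]
Since $N$ has rank $3$, this gives $v=Ny$. Therefore
\[
|v|^2=y^THy=\tfrac{L^2}{8}(e_i-e_k)^TD^{-1}(4D^2-dd^T)D^{-1}(e_i-e_k)=\tfrac{L^2}{8}\cdot8=L^2 .
\]
All six Morley edges thus have length $L$. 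Combined with the previous paragraph, this gives the converse: the Morley tetrahedron is regular.
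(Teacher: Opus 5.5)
\textbf{The step you flagged contains an algebra error and relies on a false claim.} The direct direction is the paper's argument: centroid at the origin, $PP^T=\tfrac{L^2}{2}I_3$, $P\one=0$, hence $N=\tfrac{2}{L^2}PTK$. The problem is in ``identifying $d$''. The $i$-th entry of $(4D^2-dd^T)KS$ is $4d_i^2(KS)_i-d_i\,(d^TKS)$, not $4d_i(KS)_i-d_i\,(d^TKS)$. Dividing by $d_i>0$ therefore gives $d_i(KS)_i=d^TKS/4$, not $(KS)_i=d^TKS/4$.

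Your consequences are all false. The $(KS)_i$ need not be equal, the $t_i$ need not be equal, and $d$ need not be constant. For $T_1$ the certified solution in Section~\ref{sec:intervals} has $d=(a_0,b_0,b_0,a_0)$ with $a_0\ne b_0$. Equal $t_i$ for every solution would also make every such tetrahedron isosceles by Theorem~\ref{thm:equaldistances}, contradicting Theorem~\ref{thm:T1}. Your remark that $d$ is ``not otherwise determined'' by \eqref{eq:regularity} is also wrong: the diagonal of $K^{-1}HK^{-1}=4D^2-dd^T$ is $(3d_i^2)_i$. As written, the converse rests on granting a false statement.

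\textbf{The fix is one line and gives exactly what you need.} The corrected identity reads $(KS)_i=c/d_i$ with $c=d^TKS/4$. Here $c>0$, because $K$ has positive entries and $S>0$, so $KS>0$. Hence $t_i=3V/(KS)_i=(3V/c)\,d_i$. So $t$ is a positive multiple of $d$ with no constancy needed, and you may define $L>0$ by $t=\sqrt2L\,d$. This is precisely the paper's step $KS=c(1/d_i)_i$, $t_i=\alpha d_i$.

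From there your argument goes through: choose $y$ with $y^TK=\tfrac{\sqrt2L}{4}(e_i-e_k)^TD^{-1}$, then $y^TH=v^TN$, $v=Ny$, and $|v|^2=y^THy=L^2$. This is a vertex-side version of the paper's argument. The paper instead shows that the Gram matrix of the barycentric gradients of the Morley tetrahedron is $T^{-1}K^{-1}HK^{-1}T^{-1}=\alpha^{-2}(4I_4-\one\one^T)$. Your route has the small advantage of producing the edge length $L$ directly. Your check, via the strict monotonicity of $\ph$ on $(1,2)$, that the given $K$ is the $K$ of the reconstructed tetrahedron is a point the paper leaves implicit.
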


\begin{proof}
Take the centroid of the regular Morley tetrahedron as origin, and write its vertices as $m_i$. Then
\[
\sum_i m_i=0,\;
m_i\cdot m_j=\frac{L^2}{2}\delta_{ij}-\frac{L^2}{8},
\; \sum_i m_im_i^T=\frac{L^2}{2}I_3.
\]
Using \eqref{eq:distances} gives
\[
n_j=\frac{2}{L^2}\sum_i t_iq_{ij}m_i.
\]
Taking scalar products proves \eqref{eq:regularity}.

For the converse, a positive semidefinite matrix $H$ with diagonal $1$, rank $3$, and a positive kernel vector determines four unit normals which positively span $\R^3$. The half spaces $n_i\cdot X\ge-1$ bound a nondegenerate tetrahedron with these inward normals; this is the simplex case of the standard facet-normal reconstruction, cf.~\cite{Schneider}. Its shape is uniquely determined by $H$: translations and a common dilation account for the four support parameters. The positivity of the kernel also implies that every three normals are independent.

By Lemma~\ref{lem:negative}, $K$ is invertible. Equation \eqref{eq:regularity} and the kernel condition imply
\[
KS=c(1/d_i)_i\;(c>0).
\]
Consequently the actual distances in \eqref{eq:distances} satisfy $t_i=\alpha d_i$ for a common $\alpha>0$. Put $T=\diag(t_i)$ and $B=K^{-1}HK^{-1}$. The Gram matrix of the barycentric gradients of the Morley tetrahedron is
\[
T^{-1}BT^{-1}=\alpha^{-2}(4I_4-\one\one^T).
\]
This is the Gram matrix of barycentric gradients of a regular tetrahedron, so all its edges are equal. The matrix of distance coordinates is invertible because $K$ and $T$ are invertible; thus the Morley tetrahedron is nondegenerate.
\end{proof}

In particular, regularity is equivalent to
\begin{equation}
\frac{B_{ij}}{\sqrt{B_{ii}B_{jj}}}=-\frac13\;(i\ne j).
\label{eq:normalizedgram}
\end{equation}
Indeed, \eqref{eq:normalizedgram} gives $B=4D^2-dd^T$ on taking $d_i=\sqrt{B_{ii}/3}$. Its polynomial form is $9B_{ij}^2=B_{ii}B_{jj}$ together with $B_{ij}<0$.

We shall repeatedly use the edge formula
\begin{equation}
|A_kA_l|=\frac{2S_iS_j\sin\theta_{ij}}{3V},
\; \{i,j,k,l\}=\{1,2,3,4\}.
\label{eq:edge}
\end{equation}
It follows by expressing the volume in terms of the two face altitudes to their common edge. Also,
\[
\sin^2\theta_{ij}=1-\ph(q_{ij})^2
=\frac{(q_{ij}^2-1)^2(4-q_{ij}^2)}4.
\]

\section{A symmetric two parameter family}\label{sec:family}

For $\rho>0$ and $0<\sigma<\pi/2$, let
\[
T(\rho,\sigma)=ABCD
\]
be given by
\[
A=\left(-\frac12,0,0\right), B=\left(\frac12,0,0\right),
\]
and
\[
C=(0,\rho\cos\sigma,\rho\sin\sigma), D=(0,\rho\cos\sigma,-\rho\sin\sigma).
\]
Its edge lengths are
\[
AB=1, AC=AD=BC=BD=\lambda=\sqrt{\frac14+\rho^2},
CD=\mu=2\rho\sin\sigma.
\]

The reflections
\[
(x,y,z)\longmapsto(-x,y,z)
\]
and
\[
(x,y,z)\longmapsto(x,y,-z)
\]
preserve the vertex set, so by Lemma~\ref{lem:equivariance} they are also symmetries of the Morley tetrahedron. Consequently
\[
|A'C'|=|B'C'|=|A'D'|=|B'D'|.
\]

Define
\[
f(\rho,\sigma)=|A'B'|^2-|C'D'|^2,
g(\rho,\sigma)=|A'B'|^2-|A'C'|^2.
\]
The Morley tetrahedron of $T(\rho,\sigma)$ is regular exactly when
\[
f(\rho,\sigma)=g(\rho,\sigma)=0,
\]
and, if it is not regular, it is isosceles exactly when
\[
f(\rho,\sigma)=0.
\]

\section{Counterexamples to the original conjectures}
\label{sec:counterexamples}

\begin{theorem}
\label{thm:T1} There exists a nondegenerate tetrahedron $T_1$ satisfying
\[
AB=1, AC=AD=BC=BD=\lambda_*,
CD=\mu_*,
\]
where
\[
\lambda_*\approx1.5793920543,
\]
and
\[
\mu_*\approx2.3518923337,
\]
whose Morley tetrahedron is regular. The tetrahedron $T_1$ is neither regular nor isosceles.
\end{theorem}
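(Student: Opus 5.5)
The plan is to find the tetrahedron inside the two parameter family $T(\rho,\sigma)$ of Section~\ref{sec:family}. There the regularity of the Morley tetrahedron reduces to the two equations $f(\rho,\sigma)=g(\rho,\sigma)=0$. So it suffices to exhibit a zero $(\rho_*,\sigma_*)$ of the map $(f,g)$ in the open domain $\rho>0$, $0<\sigma<\pi/2$, and then to read off $\lambda_*=\sqrt{1/4+\rho_*^2}$ and $\mu_*=2\rho_*\sin\sigma_*$.

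\textbf{Step 1: explicit formulas.} I will write $f$ and $g$ explicitly. The face normals of $T(\rho,\sigma)$ are elementary functions of $(\rho,\sigma)$, and so are the dihedral angles $\theta_{ij}$ and the entries $q_{ij}=2\cos(\theta_{ij}/3)$.

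By \eqref{eq:distances}, each Morley vertex $M_i$ is the unique point with prescribed signed distances $t_iq_{ij}$ to the four faces, where $t_i=3V/(KS)_i$. The formula applies because the planes are concurrent. Hence the squared edges of $\M(T(\rho,\sigma))$ are continuous, indeed real analytic, in $(\rho,\sigma)$. Equivalently, I can use $B=K^{-1}HK^{-1}$ and the criterion \eqref{eq:normalizedgram}. By the two reflection symmetries, only the two conditions $9B_{12}^2=B_{11}B_{22}$ and $9B_{13}^2=B_{11}B_{33}$ survive, with the appropriate labelling; this is just the pair $f=g=0$.

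\textbf{Step 2: locating the root rigorously.} Numerically, Newton's method started near $\lambda\approx1.58$, $\mu\approx2.35$ gives an approximate root. To make this rigorous I will apply the Poincar\'e--Miranda theorem, or equivalently an interval Newton (Krawczyk) test, on a small rectangle $R=[\rho_0-\varepsilon,\rho_0+\varepsilon]\times[\sigma_0-\varepsilon,\sigma_0+\varepsilon]$ around the numerical root.

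The check is as follows. After a linear change of variables by the approximate inverse Jacobian, the first component must have opposite signs on the two vertical sides of $R$, and the second component opposite signs on the two horizontal sides. These sign conditions are verified by interval arithmetic, with enclosures of all trigonometric quantities, as announced for Section~\ref{sec:intervals}. Alternatively, one can eliminate the trigonometry by using the variables $q_{ij}$ directly, which are algebraic under $\ph$, and certify the root by exact rational evaluation of sign changes of polynomial resultants. The existence of a zero in $R$ then follows.

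I expect this step to be the main obstacle. The functions are compositions of arccos, division by three and cos, so a sharp enclosure needs care near the root. The Jacobian must be nonsingular there, so that a Krawczyk-type contraction holds; I would verify this first in floating point and then in intervals.

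\textbf{Step 3: remaining properties.} Nondegeneracy holds automatically, since $\rho_*>0$ and $0<\sigma_*<\pi/2$ keep $C$ and $D$ off the line $AB$ and apart from each other. The Morley tetrahedron is nondegenerate by Proposition~\ref{prop:regularity}.

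Finally, $T_1$ is not isosceles, because isosceles would force $AB=CD$, that is $\mu_*=1$, while the enclosure gives $\mu_*\approx2.35\ne1$. A fortiori $T_1$ is not regular. The enclosure of $R$ also yields the stated decimal values of $\lambda_*$ and $\mu_*$ to the displayed precision, provided $\varepsilon$ is taken small enough, say $10^{-11}$.
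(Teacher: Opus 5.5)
Your plan is sound, but it certifies the root differently from the paper. The paper does not certify the zero in $(\rho,\sigma)$. It applies a Krawczyk-type contraction to the ten polynomial equations \eqref{eq:tenequations} in the unknowns $(q_{ij},d_i)$, on a box of radius $10^{-5}$ about a rational centre, so only exact rational arithmetic is needed. The price is that a zero of $F$ is not automatically a tetrahedron. The paper must therefore also verify \eqref{eq:intervalgeometry} on the box and use the converse half of Proposition~\ref{prop:regularity}. It places the solution in the family $T(\rho,\sigma)$ only afterwards, from the invariance of the box and the equations under $(1\,4)$ and $(2\,3)$.

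Your route has realizability, nondegeneracy and both reflections built in from the start, and it reduces everything to a $2\times2$ problem. In exchange it needs enclosures of transcendental quantities. It also gives at most uniqueness inside the symmetric family, whereas the ten-dimensional box proves that $T_1$ is isolated among all shapes.

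The obstacle you anticipate in Step~2 mostly disappears if you use the parameters $(b,c)$ of Section~\ref{sec:intervals}. There $h_p,h_s,h_q$, the face areas and the $t_i$ involve only square roots. Each $q$ is then the unique root in $(1,2)$ of $x^3-3x+2h=0$, which rational bisection encloses. Your ``alternatively'' is essentially the paper's second, exact existence proof in Theorem~\ref{sym:thm:classification}, via the resultant \eqref{sym:eq:resultant} and the sextic $P$.

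There are two small points to fix.
\begin{itemize}
\item In Step~3, what you need is that the four points are not coplanar, which follows from $\sin\sigma\cos\sigma\ne0$. Being off the line $AB$ and distinct is not enough: at $\sigma=\pi/2$ both hold, yet the points are coplanar.
\item The aside in Step~1 is not an exact restatement of \eqref{eq:normalizedgram}. Its polynomial form needs the signs $B_{ij}<0$, and the orbit $\{B_{34}\}$ is dropped without comment. Since you work with $f=g=0$ anyway, nothing depends on it.
\end{itemize}
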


\begin{proof}
In Section~\ref{sec:intervals} we show that the regularity equations have exactly one solution in a small box around the values labelled $T_1$, and that this solution comes from a nondegenerate tetrahedron. The box and the equations are invariant under two disjoint transpositions, so the solution is invariant as well and the tetrahedron has the two corresponding reflections. After relabelling and scaling it belongs to the family $T(\rho,\sigma)$ of Section~\ref{sec:family}.

The formulas in Section~\ref{sec:intervals} give
\[
1.57<\lambda_*<1.59,\; 2.35<\mu_*<2.36.
\]
Hence $T_1$ is neither regular nor isosceles. The decimal values in the statement are approximations, and the parameters are
\[
\rho_*\approx1.4981586235,\; \sigma_*\approx0.9025793893.
\]
An exact algebraic specification of $T_1$ is given in Theorem~\ref{sym:thm:classification}.
\end{proof}

\subsection{A disphenoid counterexample}

For $c>0$, consider
\[
A=\left(-\frac12,0,0\right), B=\left(\frac12,0,0\right),
\]
and
\[
C=\left(0,c,\frac12\right), D=\left(0,c,-\frac12\right).
\]
Then
\[
AB=CD=1,
AC=AD=BC=BD= \sqrt{c^2+\frac12}.
\]
Thus every tetrahedron in this family is isosceles.

\begin{theorem}
\label{thm:T2} Let
\[
c_0^2=\frac{8\sqrt6-3}{90}.
\]
The isosceles tetrahedron $T_2$ of the family above with $c=c_0$ has a regular Morley tetrahedron. Its edges are
\[
AB=CD=1,\;
AC=AD=BC=BD=\lambda_\dagger=\sqrt{\frac{21+4\sqrt6}{45}}\approx0.8272841641,
\]
so $T_2$ is isosceles but not regular.
\end{theorem}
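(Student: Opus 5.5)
The plan is to use the large symmetry group of this disphenoid to reduce regularity of $\M(T_2)$ to one scalar equation. That equation is then solved with the Gram matrix formulation of Proposition~\ref{prop:regularity}.

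\textbf{Step 1: symmetry and the shape of $H$ and $K$.} Besides the two reflections of Section~\ref{sec:family}, the family has the isometry $(x,y,z)\mapsto(z,c-y,x)$. It maps $A\mapsto D$, $B\mapsto C$, $C\mapsto B$, $D\mapsto A$. The resulting symmetry group of order $8$ swaps the pair of edges $AB$, $CD$ and acts transitively on the four cross edges. All four faces are congruent, so $S$ is a multiple of $\one$. There are only two dihedral angles: $\alpha$ along $AB$ and $CD$, and $\beta$ along the cross edges. Write $p=2\cos(\alpha/3)$ and $q=2\cos(\beta/3)$. Then
\[
K=I+pP+qQ,\qquad H=I+\ph(p)P+\ph(q)Q,
\]
where $P$ is the permutation matrix of the fixed-point-free involution of the indices that pairs the two faces meeting along $AB$ (equivalently, the two faces meeting along $CD$), and $Q=J-I-P$. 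All these matrices commute and have the common eigenvectors
\[
\one,\quad u=(1,1,-1,-1)^T,\quad v=(1,-1,1,-1)^T,\quad w=(1,-1,-1,1)^T,
\]
ordering the indices so that $P$ swaps $1\leftrightarrow2$ and $3\leftrightarrow4$. The kernel condition $H\one=0$ reads
\[
1+\ph(p)+2\ph(q)=0 .
\]
It is the familiar relation $\cos\alpha+2\cos\beta=-1$ for disphenoids of this type.

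\textbf{Step 2: reduce \eqref{eq:regularity} to one equation.} Since $S\propto\one$ and $K\one=s\one$ with $s=1+p+2q$, \eqref{eq:distances} gives equal $t_i$. So $d=\delta\one$, and \eqref{eq:regularity} becomes
\[
H=\delta^2\bigl(4K^2-s^2J\bigr).
\]
On $\one$ both sides vanish, using the kernel condition. On $u$ the eigenvalues are $1+\ph(p)-2\ph(q)$ and $(1+p-2q)^2$. On $v$ and $w$ they are $1-\ph(p)$ and $(1-p)^2$. The converse part of Proposition~\ref{prop:regularity} therefore requires exactly
\[
\frac{1+\ph(p)-2\ph(q)}{(1+p-2q)^2}=\frac{1-\ph(p)}{(1-p)^2}.
\]
The common value is $4\delta^2$. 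It is positive because $\ph(p)=\cos\alpha<1$ and $p\ne1$, so $d>0$ is automatic.

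\textbf{Step 3: solve and identify $c_0$.} Substituting $\ph(p)=-1-2\ph(q)$ gives one polynomial equation in $p$ and $q$, to be solved together with the kernel relation, with $1<p,q<2$. I would compute the dihedral angles of the family directly: the face normals give $\cos\alpha$ and $\cos\beta$ as rational functions of $c^2$. Then I would check that at $c_0^2=(8\sqrt6-3)/90$ the corresponding $p,q$ satisfy the equation above. The natural route is to eliminate by resultants and show that the minimal polynomial of $c^2$ coming from the system has $(8\sqrt6-3)/90$ as a root. Finally I would confirm, by an exact sign check, that this root gives the branch with $1<p,q<2$. The edge value follows from $\lambda_\dagger^2=c_0^2+\tfrac12=(21+4\sqrt6)/45\neq1$, so $T_2$ is not regular.

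\textbf{Main obstacle.} The main difficulty is the exact verification in Step 3. The quantities $p$ and $q$ are roots of cubics $\ph(x)=\cos\theta$ over $\mathbb Q(\sqrt6)$, so a naive substitution involves cube roots. I would first try to avoid these by working with $p,q$ as unknowns. I would clear the $\ph$'s and compute a resultant of the two polynomial equations together with the relation linking $\ph(p)$ to $c^2$. This yields a polynomial in $c^2$ with rational coefficients. I would then factor it exactly and isolate the quadratic factor $8100x^2+540x-375$, whose roots are $(-3\pm8\sqrt6)/90$, with interval arithmetic ruling out the spurious roots.
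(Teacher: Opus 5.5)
Your Steps 1 and 2 are the paper's own reduction. You have the same $K$ and $H$ and the same eigenvectors, and after substituting $1+\ph(p)=-2\ph(q)$ you reach the same single condition $(p+2)(1+p-2q)^2=-8\ph(q)$ with $\delta^2=(p+2)/8$. There is one harmless sign slip: $H_{ij}=n_i\cdot n_j=-\cos\theta_{ij}$, so $\ph(p)=-\cos\alpha$ and the kernel relation is $\cos\alpha+2\cos\beta=1$. The regular case $\tfrac13+\tfrac23=1$ confirms this, and your argument only uses $\ph$, so nothing breaks.

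The routes diverge in Step 3, and the obstacle you anticipate there does not occur. At $c=c_0$ one has $\ph(p)=96\sqrt6-235$ and $\ph(q)=117-48\sqrt6$, and both cubics $x^3-3x+2h$ split over $\mathbb{Q}(\sqrt6)$:
\begin{itemize}
\item the $q$-cubic has the root $q_0=2\sqrt6-3$;
\item the $p$-cubic equals $(x+2\sqrt6-5)\bigl(x^2+(5-2\sqrt6)x+46-20\sqrt6\bigr)$, and the quadratic factor has a root $p_0\in(1,2)$.
\end{itemize}
The paper writes down $q_0$ and $p_0$ and checks these two values of $\ph$ by reduction. It identifies them with the actual entries of $K$ using the monotonicity of $\ph$ on $(1,2)$. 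It then verifies $(p_0+2)(1+p_0-2q_0)^2=384\sqrt6-936=-8\ph(q_0)$ by reducing modulo the quadratic. No resultant, interval arithmetic or branch selection is needed.

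Your elimination to a polynomial in $c^2$ can be made rigorous, but it costs more and needs a step you only gesture at. The vanishing of the eliminant at $c_0^2$ shows only that \emph{some} pair among the nine complex root pairs of the two cubics satisfies the system. You still have to show that the eight non-physical pairs fail. You could do this by rigorous numerical evaluation, or by computing gcds over $\mathbb{Q}(\sqrt6)$, which leads straight back to the paper's factorization. Ruling out other roots of the eliminant is irrelevant to this statement, and a sign check alone cannot certify an equality.

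If you want elimination anyway, eliminate $p$ between the kernel relation and the regularity equation and leave $c$ out entirely. That resultant, in the proof of Theorem~\ref{t2:thm:classification}, has the factor $q^2+6q-15$. It produces $q_0=2\sqrt6-3$ directly and also yields the classification of isosceles solutions.
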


\begin{proof}
Put $r=\sqrt6$ and
\begin{equation}\label{t2:eq:exactpq}
q_0=2r-3,\;
p_0=r-\frac52+\frac12\sqrt{60r-135}.
\end{equation}
The number $p_0$ is the positive root of
\begin{equation}\label{t2:eq:pquadratic}
p^2+(5-2r)p+46-20r=0.
\end{equation}
The quadratic is negative at $1$ and positive at $2$, and its constant term is negative, so $1<p_0<2$. Also $1<q_0<2$ because $2<r<5/2$. Reducing powers of $p_0$ by \eqref{t2:eq:pquadratic} gives $p_0^3=3p_0+470-192r$, and a direct expansion gives $q_0^3=102r-243$. Hence
\begin{equation}\label{t2:eq:phi}
\ph(p_0)=96r-235,\; \ph(q_0)=117-48r.
\end{equation}

In the coordinates above with $c=c_0$, the inward unit normals of the faces opposite $A$ and $B$ have scalar product
\[
\frac{1/4-c_0^2}{1/4+c_0^2}=96\sqrt6-235=\ph(p_0),
\]
and the same holds for the faces opposite $C$ and $D$. For a face opposite $A$ or $B$ and a face opposite $C$ or $D$ the scalar product is
\[
-\frac{1/4}{1/4+c_0^2}=117-48\sqrt6=\ph(q_0).
\]
Since $\ph$ is strictly decreasing on $(1,2)$, formula \eqref{eq:normalgram} shows that the matrix $K$ of Section~\ref{sec:gram} is
\begin{equation}\label{t2:eq:K}
K=\begin{pmatrix}
1&p_0&q_0&q_0\\p_0&1&q_0&q_0\\q_0&q_0&1&p_0\\q_0&q_0&p_0&1
\end{pmatrix}.
\end{equation}
The row sums of $H=\ph[K]$ vanish, because
\begin{equation}\label{t2:eq:row}
1+\ph(p_0)+2\ph(q_0)=0.
\end{equation}
On $(1,1,-1,-1)^T$ the eigenvalues of $K$ and $H$ are $1+p_0-2q_0$ and $1+\ph(p_0)-2\ph(q_0)=-4\ph(q_0)$. On the plane spanned by $(1,-1,0,0)^T$ and $(0,0,1,-1)^T$ they are $1-p_0$ and $1-\ph(p_0)=(p_0-1)^2(p_0+2)/2$. Put $\tau_0^2=(p_0+2)/8$. Reduction by \eqref{t2:eq:pquadratic} gives
\begin{equation}\label{t2:eq:tau}
(p_0+2)(1+p_0-2q_0)^2=384r-936=-8\ph(q_0).
\end{equation}
Hence $H$ and $\tau_0^2K(4I_4-\one\one^T)K$ agree on all four eigenvectors, the constant vector being annihilated by both. This is \eqref{eq:regularity} with $d_1=\dots=d_4=\tau_0$. Proposition~\ref{prop:regularity} shows that the Morley tetrahedron is regular. Finally
\[
AC^2=c_0^2+\frac12=\frac{21+4\sqrt6}{45}\ne1.
\]
The interval argument in Section~\ref{sec:intervals} additionally proves that this solution is isolated among all tetrahedron shapes.
\end{proof}

\begin{theorem}
\label{thm:curve} There exists a real analytic one parameter family of nonisosceles tetrahedra whose Morley tetrahedra are isosceles and nonregular.
\end{theorem}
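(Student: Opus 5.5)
We will work inside the family $T(\rho,\sigma)$ of Section~\ref{sec:family} and apply the implicit function theorem to the single equation $f(\rho,\sigma)=0$ near the point $(\rho_*,\sigma_*)$ defining $T_1$. By the two reflection symmetries, the Morley tetrahedron of $T(\rho,\sigma)$ satisfies $|A'C'|=|B'C'|=|A'D'|=|B'D'|$. Hence on the zero set of $f$ it is isosceles, with opposite pairs $A'B'$, $C'D'$ equal and the four cross edges equal. Since $f(\rho_*,\sigma_*)=g(\rho_*,\sigma_*)=0$ by Theorem~\ref{thm:T1}, we obtain a curve through $T_1$ if $\nabla f(\rho_*,\sigma_*)\neq0$.

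\emph{Analyticity.} The dihedral angles of $T(\rho,\sigma)$ are real analytic in $(\rho,\sigma)$ on the open domain of nondegenerate tetrahedra, and each lies strictly in $(0,\pi)$. So the $q_{ij}=2\cos(\theta_{ij}/3)$ are analytic. The distances $t_i=3V/(KS)_i$ in \eqref{eq:distances} are analytic because $(KS)_i>0$. The Morley vertices are then obtained by solving the linear system $\ell_j(M_i)=t_iq_{ij}$, which has analytic coefficients and an invertible matrix. Therefore $f$ and $g$ are real analytic, and the analytic implicit function theorem gives a real analytic curve $\gamma(s)$ with $f(\gamma(s))=0$ and $\gamma(0)=(\rho_*,\sigma_*)$.

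\emph{Main obstacle: the gradient condition.} We must show $\nabla f(\rho_*,\sigma_*)\ne0$, and moreover that $g$ is not identically zero along $\gamma$. For the gradient, the interval arithmetic of Section~\ref{sec:intervals} can bound $\partial f/\partial\rho$ or $\partial f/\partial\sigma$ away from zero on the small box around $T_1$. For the second point, I would use the Jacobian of $(f,g)$: the uniqueness of $T_1$ as a solution of the regularity system in the box would be certified by a Krawczyk or Newton test, which shows that the Jacobian of $(f,g)$ is invertible there. This gives two conclusions. First, both gradients are nonzero. Second, $\nabla g$ is not parallel to $\nabla f$, so $dg/ds(0)\neq0$ and $g(\gamma(s))\ne0$ for small $s\neq0$. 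The Morley tetrahedra on $\gamma(s)$ for $s\ne0$ are therefore isosceles but not regular. If the Section~\ref{sec:intervals} checks are phrased in Gram coordinates rather than $(\rho,\sigma)$, I would restrict that Jacobian to the symmetric slice, which is invariant under the two transpositions, and transport it to the $(\rho,\sigma)$ chart.

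\emph{Nonisosceles.} A tetrahedron $T(\rho,\sigma)$ has edges $AB=1$, $CD=\mu$, and four cross edges equal to $\lambda$. It is isosceles exactly when $\mu=1$, and it is regular exactly when in addition $\lambda=1$. At $T_1$ we have $\mu_*\approx2.35\ne1$, so by continuity $\mu\ne1$ for small $s$. Shrinking the parameter interval therefore gives a real analytic one-parameter family of nonisosceles tetrahedra whose Morley tetrahedra are isosceles and nonregular. These tetrahedra are pairwise non-similar because $\gamma'(0)\neq0$ and $(\rho,\sigma)$ determines the shape after the normalisation $AB=1$.
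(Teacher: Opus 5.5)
Your plan matches the paper's. It applies the implicit function theorem to $f=0$ at $T_1$ using a certified nonvanishing derivative. The paper certifies $\partial f/\partial b\in(-0.001840,-0.001839)$ in the chart $b=\rho\sin\sigma$, $c=\rho\cos\sigma$; this chart is a local diffeomorphism, so $\nabla f\ne0$ in your chart as well. It then uses Section~\ref{sec:intervals} to exclude regularity on the curve, and gets $\mu\ne1$ by continuity. Your analyticity argument is a useful addition, since the paper does not spell it out.

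\emph{The gap.} The step that is not justified as written is the claim that the Krawczyk test shows $\partial(f,g)/\partial(\rho,\sigma)$ to be invertible. Section~\ref{sec:intervals} certifies nonsingularity of the $10\times10$ Jacobian of $F$ in the variables $(q_{ij},d_i)$. There $q$ need not come from a tetrahedron and the $d_i$ are auxiliary unknowns. The part of that system invariant under the two reflections is still $5$-dimensional, so passing to a $2\times2$ Jacobian in $(\rho,\sigma)$ is not a change of chart. Your claim is true, but it needs an argument.
\begin{enumerate}
\item Write $F=0$ as $K^{-1}\varphi[K]K^{-1}=4D^2-dd^T$.
\item The map sending a rank-$3$ matrix $B$ with positive kernel vector $k$ to the shape of the tetrahedron with barycentric Gram matrix $\operatorname{diag}(k)B\operatorname{diag}(k)$ is a submersion. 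Its fibre over the regular shape is $\{4D^2-dd^T\}$.
\item Suppose a nonzero tangent vector $v$ to the tetrahedron hypersurface $\det\varphi[K]=0$ were killed by the differential of the Morley shape map. Then $v$ would move $K^{-1}\varphi[K]K^{-1}$ along that fibre, and so would combine with some $\delta d$ into a kernel vector of $DF$.
\item Hence the Morley shape map has invertible differential at $T_1$. Equivariance under the two reflections makes this differential an isomorphism between the $2$-dimensional fixed subspaces.
\item Finally, $f/|A'C'|^2$ and $g/|A'C'|^2$ are local coordinates on symmetric Morley shapes near the regular one.
\end{enumerate}
Alternatively, you would need a new Krawczyk test run on $(f,g)$ itself.

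\emph{The simpler fix.} The paper avoids all of this. The contraction argument directly gives \emph{uniqueness} of the zero of $F$ in the box. Suppose $g(\gamma(s))=0$ for some small $s\ne0$. That tetrahedron would then have a regular Morley tetrahedron, and continuity of its $q_{ij}$ and $d_i$ in the parameters would produce a second zero of $F$ in the box. This gives $g\ne0$ on the punctured curve without any transversality. Your stronger conclusion $dg/ds(0)\ne0$ is not needed for the theorem.
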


\begin{proof}
Use the equivalent parameters
\[
A=(-a,0,0),\ B=(a,0,0),\ C=(0,c,b),\ D=(0,c,-b),
\]
where $a=1/2$, $b=\rho\sin\sigma$ and $c=\rho\cos\sigma$. By Section~\ref{sec:intervals}, $\partial f/\partial b\ne0$ at $T_1$, so the implicit function theorem gives a real analytic curve $f=0$ through $T_1$.

Section~\ref{sec:intervals} also shows that no other tetrahedron close to $T_1$ has a regular Morley tetrahedron. Indeed, the dihedral parameters of the tetrahedron with parameters $(b,c)$ depend continuously on $(b,c)$, and so do the normalized distances $d_i$ when the Morley tetrahedron is regular; a regular example close to $T_1$ would therefore give a second zero in the uniqueness box. Hence $g\ne0$ at all points of the curve near $T_1$ other than $T_1$ itself, and the Morley tetrahedra at these points are isosceles but not regular. Finally, $AB\ne CD$ near $T_1$, so these tetrahedra are not isosceles.
\end{proof}

\begin{corollary}
The two converse conjectures proposed in \cite{TranActa} are false.
\end{corollary}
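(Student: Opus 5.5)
The corollary follows by checking the hypotheses of each conjecture against the examples already constructed. Conjecture~\ref{conj:old1} has a regular part and an isosceles part, and I will refute them separately.

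For the regular part, Theorem~\ref{thm:T1} gives a nondegenerate tetrahedron $T_1$ whose Morley tetrahedron is regular. The same theorem shows $T_1$ is not regular, and indeed not even isosceles. Theorem~\ref{thm:T2} gives a second counterexample, $T_2$. It is isosceles but has $AC^2=(21+4\sqrt6)/45\ne1=AB^2$, so it is not regular, yet its Morley tetrahedron is regular.

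For the isosceles part, there are two routes. First, every regular tetrahedron is isosceles in the edge-equality sense adopted here, so $T_1$ already has an isosceles Morley tetrahedron while not being isosceles. If one insists on the stricter reading that excludes regular tetrahedra, as in Definition~1 of \cite{TranActa}, I would instead use Theorem~\ref{thm:curve}. It supplies nonisosceles tetrahedra whose Morley tetrahedra are isosceles and nonregular, and this refutes the isosceles statement under either convention.

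For Conjecture~\ref{conj:old2}, the key observation is that the generalized $p$-sector construction of \cite[Theorem 6]{TranActa} with parameters $e=f=g=3$ is ordinary trisection. Its output is therefore exactly the Morley tetrahedron $\M(ABCD)$ defined in \eqref{eq:morleydefinition}. A universal converse over all admissible $e,f,g>1$ must in particular hold at $e=f=g=3$, and there the counterexamples above apply verbatim.

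The only point needing care is this specialization. One must confirm from the definition in \cite{TranActa} that, at parameter value $3$, the plane selected on each edge is the trisector closer to the prescribed face, matching the convention $(XY)_Z$. If the generalized construction is parametrized with the opposite orientation, the relevant vertex equations would be permuted. By Lemma~\ref{lem:equivariance} this amounts to a relabelling, which changes neither regularity nor isoscelity, so the conclusion stands.
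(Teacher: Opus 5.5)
Your argument is correct and is essentially the paper's: Theorem~\ref{thm:T1} (or $T_2$) refutes the regular case, and Theorem~\ref{thm:curve} refutes the isosceles case under either convention. For Conjecture~\ref{conj:old2}, the paper settles the specialization point you flag by citing Definition~4 and Remark~7 of \cite{TranActa}, which give $\M_{3,3,3}=\M$ for every tetrahedron.

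You should drop your ``opposite orientation'' fallback, which is wrong and also unnecessary. Taking the other trisector at every edge is not a relabelling: the point attached to $F_i$ would have face distances proportional to $(1/q_{ij})_j$ rather than $(q_{ij})_j$, so Lemma~\ref{lem:equivariance} does not apply.
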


\begin{proof}
Theorem~\ref{thm:T1} disproves the regular case of Conjecture~\ref{conj:old1}, and Theorem~\ref{thm:curve} disproves its isosceles case. By Definition~4 and Remark~7 of \cite{TranActa}, the admissible choice $e=f=g=3$ makes every $p$ secting plane in the construction of Theorem~6 the corresponding trisecting plane. Conjecture~2 of \cite{TranActa} applies that construction to an arbitrary tetrahedron. Thus $\M_{3,3,3}(ABCD)=\M(ABCD)$ for every $ABCD$, and the same examples disprove both cases of Conjecture~\ref{conj:old2}.
\end{proof}

\begin{remark}
The direct edge-equality statement remains valid. Theorem~\ref{thm:T2} shows, however, that a nonregular isosceles tetrahedron can have a regular Morley tetrahedron. Thus the nonregularity in Definition~1 of \cite{TranActa} cannot be required of the output in Theorem~3 there. Theorem~\ref{thm:curve} supplies the counterexamples to the converse of the edge-equality statement.
\end{remark}

\section{Certified existence and local uniqueness}
\label{sec:intervals}

This section contains the computations used in Theorems~\ref{thm:T1}--\ref{thm:curve}. All decimal numbers below are exact rational numbers. The argument is the usual combination of the contraction mapping theorem with interval arithmetic; see \cite{Krawczyk,Neumaier,Rump}.

\subsection{The equations and the three centers}

Use the coordinate order
\[
x=(q_{12},q_{13},q_{14},q_{23},q_{24},q_{34},d_1,d_2,d_3,d_4).
\]
Set $u=Kd$ and, for $i\le j$, define
\begin{equation}
F_{ij}(x)=4\sum_{k=1}^4d_k^2q_{ik}q_{jk}-u_i u_j-\ph(q_{ij}).
\label{eq:tenequations}
\end{equation}
Thus $F(x)=0$ is precisely \eqref{eq:regularity}. Order its components as
\[
(F_{11},F_{12},F_{13},F_{14},F_{22},F_{23},F_{24},F_{33},F_{34},F_{44}).
\]
The three rational centers have the common form
\begin{equation}
x_0=(u_0,u_0,v_0,w_0,u_0,u_0,a_0,b_0,b_0,a_0),
\label{eq:centerform}
\end{equation}
with the entries given in the following table.

\begin{center}
\small
\begin{tabular}{lrrrrr}
\toprule
 & $u_0$ & $v_0$ & $w_0$ & $a_0$ & $b_0$\\
\midrule
Regular & 1.833986596704 & 1.833986596704 & 1.833986596704 & 0.692277635482 & 0.692277635482\\
$T_1$ & 1.844867969052 & 1.648726879892 & 1.892477377433 & 0.675344993308 & 0.697538294418\\
$T_2$ & 1.898979485567 & 1.679329660736 & 1.679329660736 & 0.678171222916 & 0.678171222916\\
\bottomrule
\end{tabular}
\end{center}

Around each center take the closed box
\[
\mathcal I=\{x:\norm{x-x_0}_\infty\le r\},
\; r=10^{-5}.
\]
The matrix $J_0=DF(x_0)$ is nonsingular at all three centers. Define
\[
C=J_0^{-1}.
\]
Its entries are rational numbers determined by \eqref{eq:tenequations} and the table.

Here are the derivative formulas used in the calculation. For $a<b$, let $E^{ab}$ have entries $1$ in positions $(a,b),(b,a)$ and zero elsewhere. Then
\begin{align}
\frac{\partial F_{ij}}{\partial q_{ab}}
={}&4\sum_k d_k^2\bigl(E^{ab}_{ik}q_{jk}+q_{ik}E^{ab}_{jk}\bigr)
-(E^{ab}d)_i u_j-u_i(E^{ab}d)_j
-\ph'(q_{ij})E^{ab}_{ij},\notag\\
\frac{\partial F_{ij}}{\partial d_a}
={}&8d_aq_{ia}q_{ja}-q_{ia}u_j-u_iq_{ja}.
\label{eq:tenjacobian}
\end{align}
These formulas give $J_0$ by rational substitution. On substituting the intervals $[x_{0,k}-r,x_{0,k}+r]$ into \eqref{eq:tenjacobian}, ordinary interval addition and multiplication give a matrix of intervals $J(\mathcal I)$ containing every $DF(x)$ for $x\in\mathcal I$. For all three rows of the table, this calculation gives
\begin{equation}
\norm{I-CJ(\mathcal I)}_\infty<\frac7{100},
\; \norm{CF(x_0)}_\infty<10^{-11}.
\label{eq:contractionbounds}
\end{equation}
For a matrix of intervals, the norm is the largest row sum of the largest absolute endpoints of the entries. Thus \eqref{eq:contractionbounds} is a finite list of comparisons between rational numbers determined by the table and the formulas above.

The map $x\mapsto x-CF(x)$ is a contraction on $\mathcal I$, and its displacement from the center is at most
\[
10^{-11}+\frac7{100}r<r.
\]
It therefore has a unique fixed point $x_*$ in $\mathcal I$, and since $C$ is invertible, $x_*$ is the unique zero of $F$ in $\mathcal I$. Moreover
\begin{equation}
\norm{x_*-x_0}_\infty\le\frac{\norm{CF(x_0)}_\infty}{1-7/100}<1.1\cdot10^{-11}.
\label{eq:sharpbox}
\end{equation}

The same interval operations, using the determinant formula and Cramer's rule, give throughout each box
\begin{equation}
1<q_{ij}<2,\; d_i>0,\; \det K<0,
\; K^{-1}(1/d_i)_i>0.
\label{eq:intervalgeometry}
\end{equation}
At the zero of $F$, the matrix $H=K(4D^2-dd^T)K$ is positive semidefinite of rank $3$. The last vector in \eqref{eq:intervalgeometry} belongs to its kernel and has positive coordinates. Proposition~\ref{prop:regularity} therefore gives a nondegenerate tetrahedron whose Morley tetrahedron is regular.

Every permutation of the vertices preserves the regular center. The centers of $T_1$ and $T_2$ are preserved by $(1\,4)$ and $(2\,3)$, and that of $T_2$ also by $(1\,2)(3\,4)$. These permutations preserve the boxes and the equations, so the unique solution in each box has the same symmetries. In particular, up to similarity and relabelling, each of the three tetrahedra is the only example in its neighbourhood.

\subsection{The edge lengths and the analytic curve}

Relabel the two reflections as $(1\,2),(3\,4)$, and put $p=q_{12}$ and $s=q_{34}$. Denote the common value of the four cross entries by $q$, and set $h_p=\ph(p)$ and $h_s=\ph(s)$. The original tetrahedron has coordinates
\[
A=(-a,0,0),\; B=(a,0,0),\;
C=(0,c,b),\; D=(0,c,-b),
\]
where $a=1/2$ and
\begin{equation}
c^2=a^2\frac{1-h_p}{1+h_p},\;
b^2=c^2\frac{1+h_s}{1-h_s}.
\label{eq:reconstruction}
\end{equation}
For $T_1$, this means $p=q_{23}$ and $s=q_{14}$ in \eqref{eq:centerform}; for $T_2$, take $p=q_{14}=q_{23}$. The edge $CD$ has length $2b$, and each cross edge has length $\sqrt{a^2+b^2+c^2}$. By \eqref{eq:sharpbox}, the true values of $p$ and $s$ lie within $1.1\cdot10^{-11}$ of the entries of the table. Substituting these intervals gives
\[
1.57<\lambda_*<1.59,\; 2.35<\mu_*<2.36
\]
for $T_1$, and
\[
0.82<\lambda_\dagger<0.84
\]
for $T_2$. In the latter case $p=s$, and \eqref{eq:reconstruction} gives $b=a$. These are the bounds used in Theorem~\ref{thm:T1}; for $T_2$ they agree with the exact value in Theorem~\ref{thm:T2}.

We finally check the derivative used in Theorem~\ref{thm:curve}. Put
\[
U=(a^2+c^2)^{1/2},\; W=(b^2+c^2)^{1/2}.
\]
The two distinct face areas are $bU$ and $aW$. Formula \eqref{eq:distances} and the two reflections give
\[
\begin{aligned}
t_A&=\frac{2abc}{(1+p)bU+2qaW},&
t_C&=\frac{2abc}{2qbU+(1+s)aW},\\
|A'B'|^2&=\frac{(p-1)^2t_A^2U^2}{c^2},&
|C'D'|^2&=\frac{(s-1)^2t_C^2W^2}{c^2}.
\end{aligned}
\]
With $a,c$ fixed, the normal Gram entries satisfy
\[
h_p=\frac{a^2-c^2}{a^2+c^2},\;
h_s=\frac{b^2-c^2}{b^2+c^2},\;
h_q=-\frac{ab}{UW}.
\]
Hence
\[
p_b=0,\;
s_b=\frac{4bc^2}{(b^2+c^2)^2\ph'(s)},\;
q_b=-\frac{ac^2}{UW^3\ph'(q)}.
\]
We differentiate the formulas for $|A'B'|^2$ and $|C'D'|^2$ with respect to $b$ and substitute the intervals given by \eqref{eq:sharpbox} for $T_1$. This gives
\[
-0.001840<\frac{\partial f}{\partial b}<-0.001839.
\]
Only rational operations and square roots occur, and a square root of a positive rational interval is enclosed by rational endpoints whose squares bound the interval. In particular $\partial f/\partial b\ne0$, as needed in the proof of Theorem~\ref{thm:curve}.

\part{New conjectures}
\label{part:conjectures}

The counterexamples are not arbitrary. Both $T_1$ and $T_2$ have four equal cross edges, or equivalently two perpendicular reflection planes interchanging disjoint pairs of vertices. This suggests a weaker converse, in which regularity of the Morley tetrahedron forces some symmetry of the original tetrahedron but not regularity. In this part we state this and related conjectures. The partial results are proved in Part~\ref{part:results}.

\section{What the counterexamples suggest}
\label{sec:suggest}

In both exceptional examples there is a pair of opposite edges, $AB$ and $CD$, such that the other four edges have a common length. The plane through $C$, $D$ and the midpoint of $AB$ is then the perpendicular bisector plane of $AB$, and the reflection in it interchanges $A$ and $B$ and fixes $C$ and $D$. In the same way the perpendicular bisector plane of $CD$ contains $A$ and $B$. The two planes are perpendicular, because $AB\perp CD$. Conversely, two such reflections force the four cross edges to be equal. In $T_2$ the edges $AB$ and $CD$ are equal as well; in $T_1$ they are not.

Section~\ref{sec:intervals} proves existence and uniqueness near the three known examples, but says nothing about the rest of the shape space.

\section{The conjectures}
\label{sec:conjectures}

\begin{conjecture}[Four equal edges conjecture]
\label{conj:four} Let $ABCD$ be a nondegenerate tetrahedron. If its Morley tetrahedron is regular, then, after relabelling the vertices,
\[
AC=AD=BC=BD.
\]
In other words, there is a pair of opposite edges such that the remaining four edges, which form a skew quadrilateral, all have the same length. Equivalently, the tetrahedron has two reflection symmetries interchanging the endpoints of these two opposite edges.
\end{conjecture}

These examples suggest a complete classification.

\begin{conjecture}[Classification]\label{conj:classification}
Up to similarity and relabelling, the only tetrahedra with regular Morley tetrahedron are the regular tetrahedron and the two exceptional similarity classes represented by $T_1$ and $T_2$.
\end{conjecture}

A tetrahedron with four equal cross edges has two reflection symmetries. The next conjecture asks only for one symmetry of any kind.

\begin{conjecture}[Symmetry conjecture]\label{conj:symmetry}
If the Morley tetrahedron of $ABCD$ is regular, then some isometry other than the identity maps the vertex set $\{A,B,C,D\}$ onto itself.
\end{conjecture}

\begin{conjecture}[One pair of equal opposite edges]
\label{conj:onepair}
Let $ABCD$ be a nondegenerate tetrahedron satisfying $AB=CD$. If its Morley tetrahedron is regular, then, after a possible relabelling,
\[
AC=AD=BC=BD.
\]
\end{conjecture}

Conjecture~\ref{conj:onepair} is a special case of Conjecture~\ref{conj:four}. Note that its conclusion does not say that the remaining two edges are equal: after relabelling, $T_1$ satisfies $AB=CD$ and has four equal cross edges, but it is not isosceles.

In the notation of Section~\ref{sec:gram}, Conjecture~\ref{conj:four} can be stated purely in terms of matrices.

\begin{conjecture}[Positivity lemma]
\label{conj:positivity}
Let $K$ be symmetric with $q_{ii}=1$ and $1<q_{ij}<2$ for $i\ne j$. Suppose that $H=\ph[K]$ satisfies \eqref{eq:positivekernel}, and that $B=K^{-1}HK^{-1}$ satisfies \eqref{eq:normalizedgram}. Then, after a permutation of the indices,
\[
q_{13}=q_{14}=q_{23}=q_{24}.
\]
\end{conjecture}

Conjecture~\ref{conj:positivity} is equivalent to Conjecture~\ref{conj:four}. Indeed, the displayed equalities make $H$ invariant under $(1\,2)$ and $(3\,4)$, so its positive kernel vector satisfies $S_1=S_2$ and $S_3=S_4$, and formula \eqref{eq:edge} gives four equal cross edges. Conversely, four equal cross edges give two reflections, and the displayed equalities follow by equivariance. Proposition~\ref{prop:regularity} provides the passage from matrices to tetrahedra.

Finally, we state the analogue for the sector construction of \cite{TranActa}.

\begin{conjecture}[$p$ sector four equal edges conjecture]
\label{conj:psector} Let $\M_{e,f,g}(ABCD)$ denote the generalized $p$ sector tetrahedron of \cite[Theorem 6]{TranActa}, for admissible parameters $e,f,g>1$. If $\M_{e,f,g}(ABCD)$ is regular, then $ABCD$ has, after relabelling, four equal edges.
\end{conjecture}

\subsection*{Relations between the conjectures}

Clearly Conjecture~\ref{conj:four} implies Conjecture~\ref{conj:onepair}. The results of Part~\ref{part:results} show more:
\begin{itemize}
\item Conjectures~\ref{conj:four} and~\ref{conj:classification} are equivalent (Corollary~\ref{sym:cor:equivalence});
\item Conjectures~\ref{conj:four} and~\ref{conj:symmetry} are equivalent (Corollary~\ref{cor:symmetric});
\item Conjectures~\ref{conj:four} and~\ref{conj:positivity} are equivalent (see above);
\item under $AB=CD$, Conjecture~\ref{conj:onepair} is equivalent to the equality of the dihedral angles along $AB$ and $CD$ (Proposition~\ref{prop:cotangent}).
\end{itemize}
In particular, to prove Conjecture~\ref{conj:four} it is enough to find one reflection symmetry; Theorem~\ref{thm:reflection} then gives the second one.

\part{Results, possible approaches, and open problems}
\label{part:results}

In this part we prove the conjectures of Part~\ref{part:conjectures} under additional assumptions. Section~\ref{sec:bounds} contains a priori bounds and a criterion for reflection symmetry. Isosceles tetrahedra are treated in Section~\ref{sec:isosceles}. Section~\ref{sec:symmetry} deals with two pairs of equal opposite edges, with a single reflection, with four equal cross edges, and finally with an arbitrary symmetry. The case $AB=CD$ is reformulated in Section~\ref{sec:onepair}, and Section~\ref{sec:nearregular} proves local uniqueness at the regular tetrahedron. We summarize the situation in Section~\ref{sec:summary} and discuss possible approaches and open problems in Sections~\ref{tet:directions} and~\ref{sec:openproblems}.

\section{Bounds and a reflection criterion}
\label{sec:bounds}

We begin with bounds on the distances $t_i$ of \eqref{eq:distances}.

\begin{proposition}
\label{prop:distancebounds}
If $r_{\mathrm{in}}$ is the inradius of the original tetrahedron, then
\[
\frac{r_{\mathrm{in}}}{\sqrt3}<t_i<\frac{2r_{\mathrm{in}}}{3},
\; \frac{\max_i t_i}{\min_i t_i}<\frac2{\sqrt3}.
\]
These inequalities do not require the Morley tetrahedron to be regular.
\end{proposition}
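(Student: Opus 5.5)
The plan is to reduce both inequalities to scalar trigonometric inequalities, one for each dihedral angle. By \eqref{eq:distances}, $t_i=3V/(KS)_i$. Write $\Sigma=\sum_jS_j$; the identity $\sum_jS_j\ell_j(X)=3V$ applied at the incenter gives $r_{\mathrm{in}}=3V/\Sigma$. Hence
\[
\frac{t_i}{r_{\mathrm{in}}}=\frac{\Sigma}{(KS)_i},
\]
and it suffices to prove the strict bounds
\[
\tfrac32\,\Sigma<(KS)_i<\sqrt3\,\Sigma\qquad(i=1,\dots,4).
\]
The ratio bound then follows by dividing the two estimates: $\max_i t_i/\min_i t_i<\sqrt3\Sigma/(\tfrac32\Sigma)=2/\sqrt3$. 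Regularity of the Morley tetrahedron is never used.

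To compare $(KS)_i=S_i+\sum_{j\ne i}q_{ij}S_j$ with $\Sigma$, I would eliminate $S_i$ using the projection identity. This is the $i$-th row of $HS=0$: since $n_i\cdot n_j=-\cos\theta_{ij}$, it reads
\[
S_i=\sum_{j\ne i}S_j\cos\theta_{ij}.
\]
Substituting, and writing $x=\theta_{ij}/3\in(0,\pi/3)$ and $c=\cos x\in(1/2,1)$ (so $q_{ij}=2c$ and $\cos\theta_{ij}=4c^3-3c$), both bounds become $\sum_{j\ne i}S_j\,g(c_{ij})>0$ for a cubic $g$.

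For the lower bound I need $q_{ij}-\tfrac32-\tfrac12\cos\theta_{ij}>0$. This expression equals
\[
-2c^3+\tfrac72c-\tfrac32=(1-c)\bigl(2c^2+2c-\tfrac32\bigr).
\]
The second factor vanishes at $c=1/2$ and is increasing on $(1/2,1)$, so the product is strictly positive for $c\in(1/2,1)$.

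For the upper bound I need $\sqrt3+(\sqrt3-1)\cos\theta_{ij}-q_{ij}>0$, that is,
\[
P(c)=\sqrt3+(\sqrt3-1)(4c^3-3c)-2c>0\quad\text{on }(1/2,1).
\]
Here $P(1/2)=0$ (the case $\theta=\pi$), so I would factor out $(c-\tfrac12)$ and check that the remaining quadratic factor is positive on $[1/2,1]$. For instance, one checks its value at $c=1/2$ and its monotonicity, or alternatively its discriminant. The value $P(1)=2\sqrt3-3>0$ serves as a sanity check. Since all $S_j>0$ and each term is strictly positive, both inequalities are strict.

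The main obstacle is this last polynomial check for the upper bound, since it involves $\sqrt3$ and the endpoint $c=1/2$ is a root. Everything else follows from \eqref{eq:distances}, \eqref{eq:positivekernel}, and the factorization above.
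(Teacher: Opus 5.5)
Your reduction to $\tfrac32\Sigma<(KS)_i<\sqrt3\,\Sigma$, the ratio bound, and the lower bound are correct. The lower bound is the paper's argument: with $q=2c$, your factor $(1-c)(2c^2+2c-\tfrac32)$ equals $-\tfrac14(q-1)(q-2)(q+3)$, which is the paper's inequality $x^3-7x+6<0$ on $(1,2)$.

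\textbf{The gap is the upper bound.} The polynomial check you deferred does not merely look delicate; it fails. The quadratic cofactor of $P$ equals $-2$ at $c=1/2$. In fact
\[
P(c)=4(\sqrt3-1)\bigl(c-\tfrac12\bigr)\bigl(c-\tfrac{\sqrt3}{2}\bigr)\bigl(c+\tfrac{\sqrt3+1}{2}\bigr),
\]
so $P(c)<0$ for $1/2<c<\sqrt3/2$, for instance $P(0.55)\approx-0.089$. Since $c=\sqrt3/2$ corresponds to $\theta_{ij}=\pi/2$, the termwise inequality fails whenever a dihedral angle at the face $F_i$ is obtuse. Such angles occur; $T_1$ and $T_2$ both have them. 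The sum $\sum_{j\ne i}S_jP(c_{ij})$ is still positive, because it equals $\sqrt3\Sigma-(KS)_i$. But your argument does not prove that.

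\textbf{How to repair it.} The multiplier that cancels the diagonal term is the wrong one. Keep the diagonal term and subtract $\tfrac13$ of the row relation $\sum_j S_j\ph(q_{ij})=0$ instead:
\[
\sqrt3\,\Sigma-(KS)_i=\sum_{j}S_j\Bigl(\sqrt3-q_{ij}-\tfrac13\ph(q_{ij})\Bigr)=\frac16\sum_{j}S_j\,(q_{ij}-\sqrt3)^2(q_{ij}+2\sqrt3).
\]
Every term is nonnegative, and the $j=i$ term (with $q_{ii}=1$) is strictly positive. This is the tangent line of $x^3$ at $\sqrt3$, a linearized version of the paper's argument. With the areas normalized to sum $1$, the paper uses strict convexity directly:
\[
y_i^3<\sum_j S_jq_{ij}^3=3y_i .
\]
The upper bound is a convexity statement. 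A linear combination chosen only to remove the $q=1$ term does not make every term nonnegative.
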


\begin{proof}
Normalize the face areas to sum to $1$ and put $y_i=(KS)_i$. Since $HS=0$, we have $\sum_jS_jq_{ij}^3=3y_i$. Strict convexity gives $y_i^3<3y_i$, hence $y_i<\sqrt3$. On the other hand,
\[
x^3-7x+6=(x-1)(x-2)(x+3)<0\;(1<x<2).
\]
Applying this to the off-diagonal terms gives $3y_i<7y_i-6$, so $y_i>3/2$. Now $t_i=r_{\mathrm{in}}/y_i$.
\end{proof}

\begin{proposition}[A reflection criterion]
\label{prop:defect}
If the Morley tetrahedron is regular with edge length $L$, then
\begin{equation}
\varepsilon_{ij}:=q_{ij}+2-\frac{(t_i+t_j)^2}{L^2}\ge0.
\label{eq:defect}
\end{equation}
Equality holds if and only if the original tetrahedron has a reflection interchanging $A_i,A_j$ and fixing the other two vertices. Moreover,
\begin{equation}
\frac{(t_i+t_j)^2}{L^2(q_{ij}+2)}
+\frac{(t_i-t_j)^2}{L^2(2-q_{ij})}\le1.
\label{eq:ellipse}
\end{equation}
In particular, $d_i\le1/\sqrt2$.
\end{proposition}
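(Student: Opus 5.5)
The plan is to work, as in the proof of Proposition~\ref{prop:regularity}, with the vertices $m_1,\dots,m_4$ of the regular Morley tetrahedron, placed with its centroid at the origin. Fix $i\ne j$, and write $q=q_{ij}$, $h=\ph(q)$ and $w=m_i-m_j$, so that $|w|=L$. The inward distance $\ell_k(X)$ is $n_k\cdot X$ plus a constant. Hence \eqref{eq:distances} gives
\[
n_i\cdot w=t_i-qt_j,\qquad n_j\cdot w=qt_i-t_j,
\]
and therefore
\[
(n_i-n_j)\cdot w=-(q-1)(t_i+t_j),\qquad (n_i+n_j)\cdot w=(q+1)(t_i-t_j).
\]
Since $n_i,n_j$ are unit vectors, $n_i-n_j$ and $n_i+n_j$ are orthogonal. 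Their squared lengths factor as
\[
|n_i-n_j|^2=2-2\ph(q)=(q-1)^2(q+2),\qquad |n_i+n_j|^2=2+2\ph(q)=(q+1)^2(2-q).
\]

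For the inequalities, project $w$ onto the two orthogonal unit directions $(n_i\mp n_j)/|n_i\mp n_j|$ and use Bessel's inequality. The factors $(q-1)^2$ and $(q+1)^2$ cancel, which gives exactly \eqref{eq:ellipse}. Dropping the second term gives \eqref{eq:defect}, and this is just Cauchy--Schwarz for $(n_i-n_j)\cdot w$. For the bound on $d_i$, put $x=t_i+t_j$ and $y=t_i-t_j$. By \eqref{eq:ellipse} the point $(x,y)$ lies in an ellipse with semi-axes $L\sqrt{q+2}$ and $L\sqrt{2-q}$. Then $t_i=(x+y)/2$, and Cauchy--Schwarz gives $t_i\le \tfrac L2\sqrt{(q+2)+(2-q)}=L$, that is, $d_i\le1/\sqrt2$.

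For the equality case, suppose first that a reflection $R$ swaps $A_i,A_j$ and fixes $A_k,A_l$. Then $R$ swaps $F_i$ and $F_j$, so it swaps $n_i$ and $n_j$, and its mirror is perpendicular to $n_i-n_j$. By Lemma~\ref{lem:equivariance}, $R$ also swaps $M_i$ and $M_j$, so $w$ is perpendicular to the mirror too. Hence $w\parallel n_i-n_j$ and Cauchy--Schwarz is an equality, so $\varepsilon_{ij}=0$.

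The converse is the main step. Suppose $\varepsilon_{ij}=0$. Then $w\parallel n_i-n_j$, so $(n_i+n_j)\cdot w=0$, which forces $t_i=t_j$. Let $R$ be the reflection in the perpendicular bisector plane of $M_iM_j$. Because the Morley tetrahedron is regular, $R$ fixes $M_k$ and $M_l$. Since $n_i-n_j$ is perpendicular to the mirror, $R n_i=n_j$. The plane of $F_i$ is the plane with inward normal $n_i$ at distance $t_i$ from $M_i$, so $R$ carries it to the plane with normal $n_j$ at distance $t_i=t_j$ from $M_j$; this is the plane of $F_j$. Thus $R$ swaps the planes of $F_i$ and $F_j$, and $\ell_i\circ R=\ell_j$. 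Since $M_k$ is fixed by $R$, we get $\ell_i(M_k)=\ell_j(M_k)$, and \eqref{eq:distances} then gives $q_{ik}=q_{jk}$; similarly $q_{il}=q_{jl}$. It follows that $n_k\cdot w=t_i(q_{ik}-q_{jk})=0$, so $n_k$ lies in the mirror and $Rn_k=n_k$. The face $F_k$ is the plane with normal $n_k$ at distance $t_k$ from the fixed point $M_k$, so $F_k$ is $R$-invariant, and likewise $F_l$. Since $R$ swaps $F_i,F_j$ and fixes $F_k,F_l$, it swaps the vertices $A_i,A_j$ (each vertex being the intersection of the three faces containing it) and fixes $A_k,A_l$.

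The delicate point in this argument is showing that $R$ fixes the other two faces. It should be handled by passing through the equality $\ell_i(M_k)=\ell_j(M_k)$ to get $q_{ik}=q_{jk}$, rather than trying to read this off the normals directly.
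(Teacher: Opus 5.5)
Your proposal is correct and follows essentially the same route as the paper: the same two scalar products of $M_i-M_j$ with $n_i\mp n_j$, the same factorizations of $|n_i\mp n_j|^2$, and Cauchy--Schwarz with orthogonal projections for \eqref{eq:defect} and \eqref{eq:ellipse}. For the equality case you likewise derive $t_i=t_j$, then $q_{ik}=q_{jk}$ from $M_k$ lying on the perpendicular bisector of $M_iM_j$, then $n_k\perp M_iM_j$; your explicit check that $\ell_i\circ R=\ell_j$ just spells out the paper's remark that the plane $\ell_i=\ell_j$ is that bisector.
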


\begin{proof}
Set $v=M_j-M_i$ and $q=q_{ij}$. Formula \eqref{eq:distances} gives
\[
(n_i-n_j)\cdot v=(q-1)(t_i+t_j),
\; (n_i+n_j)\cdot v=(q+1)(t_j-t_i).
\]
The two normal sums are orthogonal and have squared lengths
\[
\|n_i-n_j\|^2=(q-1)^2(q+2),
\; \|n_i+n_j\|^2=(2-q)(q+1)^2.
\]
Cauchy's inequality proves \eqref{eq:defect}; summing the squared orthogonal projections of $v$ proves \eqref{eq:ellipse}. Minimizing its left side with respect to $t_j$ gives $t_i^2/L^2\le1$, or $d_i\le1/\sqrt2$.

If $\varepsilon_{ij}=0$, then $v$ is parallel to $n_i-n_j$, and the second scalar product implies $t_i=t_j$. The plane $\ell_i=\ell_j$ is the perpendicular bisector of $M_iM_j$. The other two Morley vertices lie in that plane, whence $q_{ik}=q_{jk}$ for $k\ne i,j$. Also $n_k\cdot v=0$ for these $k$. Reflection in the plane therefore interchanges $F_i,F_j$ and preserves the other two faces. It induces exactly the stated permutation of the original vertices. Conversely, such a reflection is preserved by the Morley construction, and $v$ is parallel to $n_i-n_j$, giving equality in Cauchy's inequality.
\end{proof}

When equality holds, the distances $t_i$ and $t_j$ can be computed.

\begin{corollary}\label{cor:reflectiondistance}
Suppose that the Morley tetrahedron is regular with edge length $L$. If the original tetrahedron has a reflection interchanging $A_i,A_j$ and fixing the other two vertices, then
\[
t_i=t_j=L\cos\frac{\theta_{ij}}6,
\;\text{equivalently}\;
d_i^2=d_j^2=\frac{q_{ij}+2}8 .
\]
\end{corollary}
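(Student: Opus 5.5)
The plan is to use the equality case of Proposition~\ref{prop:defect}. A reflection interchanging $A_i,A_j$ and fixing the other two vertices gives $\varepsilon_{ij}=0$. Its proof shows more: the displacement $v=M_j-M_i$ is parallel to $n_i-n_j$, and $t_i=t_j$. So I set $t:=t_i=t_j$ and $q:=q_{ij}$, and compute $|v|$ in two ways.

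\textbf{Step 1.} Since the Morley tetrahedron is regular, $|v|=L$. Because $v$ is parallel to $n_i-n_j$, the Cauchy inequality used in \eqref{eq:defect} is an equality:
\[
(q-1)(t_i+t_j)=|(n_i-n_j)\cdot v|=\|n_i-n_j\|\,L=(q-1)\sqrt{q+2}\,L .
\]
This is exactly the statement $\varepsilon_{ij}=0$, namely $(2t)^2=L^2(q+2)$.

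\textbf{Step 2.} Dividing by $4L^2$ gives $t^2/L^2=(q+2)/4$. Since $d_i=t_i/(\sqrt2 L)$, this becomes $d_i^2=d_j^2=(q_{ij}+2)/8$.

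\textbf{Step 3.} For the trigonometric form, substitute $q=2\cos(\theta_{ij}/3)$. The half-angle identity gives
\[
\frac{q+2}{4}=\frac{1+\cos(\theta_{ij}/3)}{2}=\cos^2\frac{\theta_{ij}}{6}.
\]
Both $t$ and $\cos(\theta_{ij}/6)$ are positive, the latter because $\theta_{ij}<\pi$. Taking square roots yields $t_i=t_j=L\cos(\theta_{ij}/6)$.

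The only point needing care is Step 1: one must check that the reflection really forces equality in \eqref{eq:defect}, not just the inequality. The converse direction of Proposition~\ref{prop:defect} provides this. By Lemma~\ref{lem:equivariance}, the reflection maps the Morley tetrahedron to itself and interchanges $M_i$ and $M_j$. Hence $v$ is orthogonal to the mirror, and therefore parallel to $n_i-n_j$, which is the normal direction of the mirror. This gives $(n_i+n_j)\cdot v=0$, so $t_i=t_j$ by the second scalar-product formula, since $q+1\neq0$. The sign of $(n_i-n_j)\cdot v$ is positive because $q>1$ and $t_i+t_j>0$. With these facts in place, the rest is direct algebra.
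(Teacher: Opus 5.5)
Your proposal is correct and follows the paper's proof: invoke the equality case of Proposition~\ref{prop:defect} to get $\varepsilon_{ij}=0$ and $t_i=t_j$, so $4t_i^2=L^2(q_{ij}+2)$, then use $q_{ij}+2=4\cos^2(\theta_{ij}/6)$. Your equivariance argument that $v$ is parallel to $n_i-n_j$ and $(n_i+n_j)\cdot v=0$ just spells out the converse direction that the paper's proof of that proposition only sketches.
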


\begin{proof}
By Proposition~\ref{prop:defect}, $\varepsilon_{ij}=0$ and $t_i=t_j$. Hence $4t_i^2=L^2(q_{ij}+2)$. Since $q_{ij}+2=2\cos(\theta_{ij}/3)+2=4\cos^2(\theta_{ij}/6)$, the claim follows.
\end{proof}

The relation $8a^2=p+2$ in the proofs of Theorems~\ref{t2:thm:classification}, \ref{thm:reflection} and~\ref{sym:thm:classification} is this identity.

\section{The isosceles case and equal face distances}
\label{sec:isosceles}

\begin{theorem}
\label{thm:disphenoid}
Let $ABCD$ be an isosceles tetrahedron. If its Morley tetrahedron is regular, then, after relabelling,
\[
AC=AD=BC=BD.
\]
Thus at least two of the three common lengths of opposite edge pairs coincide. In this case the remaining two edges also have equal lengths, because the original tetrahedron is isosceles.
\end{theorem}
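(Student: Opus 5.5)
The plan is to exploit the Klein four-group symmetry of an isosceles tetrahedron, reduce the regularity equation \eqref{eq:regularity} to three scalar equations, and observe that these force the three dihedral parameters to be roots of one cubic whose coefficients forbid three distinct roots.

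\textbf{Step 1: symmetry of $K$ and of the distances.} For an isosceles tetrahedron all four faces are congruent, so $S_1=S_2=S_3=S_4$. Moreover the three half-turns about the common perpendiculars of opposite edges realize the permutations $(1\,2)(3\,4)$, $(1\,3)(2\,4)$, $(1\,4)(2\,3)$. Hence opposite dihedral angles agree, and we may write
\[
q_{12}=q_{34}=p,\quad q_{13}=q_{24}=q,\quad q_{14}=q_{23}=r .
\]
Every row of $K$ is then a permutation of $(1,p,q,r)$. Since $S$ is constant, $(KS)_i$ is independent of $i$, so by \eqref{eq:distances} all $t_i$ coincide and $D=d\,I_4$ for a single $d>0$. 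Equation \eqref{eq:regularity} becomes
\[
H=d^2K(4I_4-\one\one^T)K .
\]

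\textbf{Step 2: diagonalize.} $K$ and $H=\ph[K]$ lie in the group algebra of the Klein four-group, so they share the character eigenvectors $\one$, $(1,1,-1,-1)^T$, $(1,-1,1,-1)^T$, $(1,-1,-1,1)^T$. On $(1,1,-1,-1)^T$ the eigenvalue of $K$ is $1+p-q-r$. The eigenvalue of $H$ is
\[
1+\ph(p)-\ph(q)-\ph(r)=2\bigl(1+\ph(p)\bigr),
\]
using the kernel condition $1+\ph(p)+\ph(q)+\ph(r)=0$. With the identity $1+\ph(x)=(2-x)(x+1)^2/2$ and $s=p+q+r$, the regularity equation on this eigenvector reads
\[
(2-p)(p+1)^2=4d^2(s-1-2p)^2 .
\]
The same equation holds with $p$ replaced by $q$ or by $r$.

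\textbf{Step 3: a cubic with negative root sum.} Thus $p,q,r$ are all roots of the single polynomial
\[
x^3+16d^2x^2-\bigl(3+16d^2(s-1)\bigr)x+4d^2(s-1)^2-2 ,
\]
which is monic of degree three, with $d$ and $s$ treated as fixed constants. Suppose $p,q,r$ were pairwise distinct. They would then be exactly its three roots, and Vieta would give $p+q+r=-16d^2<0$. This contradicts $p,q,r>1$, so two of $p,q,r$ coincide, say $q=r$ after relabelling.

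\textbf{Step 4: back to edges.} With $q=r$ we have $q_{13}=q_{14}=q_{23}=q_{24}$, and all $S_i$ are equal. The edge formula \eqref{eq:edge} then shows that the four edges $A_kA_l$ complementary to these index pairs have equal length. These four edges form the skew quadrilateral off one pair of opposite edges, which is the claim $AC=AD=BC=BD$ after relabelling. The remaining pair $AB=CD$ is equal because the tetrahedron is isosceles.

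\textbf{Main obstacle.} The delicate point is Step 3: one must make sure the polynomial is built with $s$ and $d$ genuinely common to all three equations, so that it is the same cubic for $p$, $q$ and $r$. One must also check that squaring has not lost information. It has not, because the derived equations are consequences of the original ones, and the argument only needs necessary conditions.
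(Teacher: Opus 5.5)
Your proposal is correct and follows the paper's proof essentially step for step. The paper likewise uses the half-turn symmetries to get equal face areas and equal $t_i$, evaluates the regularity equation on the three non-constant Klein-group eigenvectors to obtain $(2-u)(u+1)^2=4d^2(s-1-2u)^2$ for $u=p,q,r$, and derives a contradiction from Vieta's root sum $-16d^2<0$ of the resulting monic cubic. It then concludes with the edge formula \eqref{eq:edge}.
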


\begin{proof}
All four faces of an isosceles tetrahedron are congruent. The three rotations through $\pi$ that permute the vertices give
\[
K=\begin{pmatrix}
1&a&b&c\\ a&1&c&b\\ b&c&1&a\\ c&b&a&1
\end{pmatrix},\; S_1=S_2=S_3=S_4,
\]
where $a,b,c>1$. Put $\sigma=1+a+b+c$. By \eqref{eq:distances}, all $t_i$ are equal, so all $d_i$ equal some $\tau>0$.

On the vectors $(1,1,-1,-1)^T$, $(1,-1,1,-1)^T$, and $(1,-1,-1,1)^T$, the eigenvalues of $K$ are respectively $2a+2-\sigma$, $2b+2-\sigma$, and $2c+2-\sigma$. Since $H\one=0$, the corresponding eigenvalues of $H$ are $2+3a-a^3$, $2+3b-b^3$, and $2+3c-c^3$. Equation \eqref{eq:regularity} therefore gives
\[
2+3u-u^3=4\tau^2(2u+2-\sigma)^2
\;(u=a,b,c).
\]
If $a,b,c$ were distinct, they would be all three roots of the monic cubic
\[
X^3-3X-2+4\tau^2(2X+2-\sigma)^2.
\]
Vi\`ete's formula would give $a+b+c=-16\tau^2<0$, contrary to $a,b,c>1$. Hence two of $a,b,c$ coincide. Together with the equality of the face areas, formula \eqref{eq:edge} now gives four equal cross edges.
\end{proof}

In fact it suffices to know that the four distances $t_i$ are equal.

\begin{theorem}
\label{thm:equaldistances}
Suppose the Morley tetrahedron is regular and $t_1=t_2=t_3=t_4$. Then the original tetrahedron is isosceles and has four equal cross edges after relabelling.
\end{theorem}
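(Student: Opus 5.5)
Since all $t_i$ are equal, all normalized distances $d_i$ equal a common $\tau>0$. The plan is to use this to force the four face areas to be equal, and then hand over to Theorem~\ref{thm:disphenoid}.

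\textbf{Step 1: matrix form.} With $d_i=\tau$, equation \eqref{eq:regularity} becomes
\[
\ph[K]=H=\tau^2K(4I_4-\one\one^T)K .
\]
By \eqref{eq:distances}, equality of the $t_i$ means $KS=c\one$ for some $c>0$. By Lemma~\ref{lem:negative}, $K$ is invertible, so $S=cK^{-1}\one$. Writing $k_i$ for the $i$th row of $K$ and $r_i=(K\one)_i=1+\sum_{j\ne i}q_{ij}$ for its row sum, the diagonal and off-diagonal entries of the equation read
\[
4\norm{k_i}^2-r_i^2=\tau^{-2},\qquad 4\,k_i\cdot k_j-r_ir_j=\ph(q_{ij})\,\tau^{-2}.
\]

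\textbf{Step 2: equal areas.} The target is $S_1=S_2=S_3=S_4$. This is equivalent to $K\one$ being a multiple of $\one$, i.e.\ to all row sums $r_i$ being equal. It is also the classical characterization of isosceles tetrahedra: equal face areas, equivalently incenter equal to centroid. Once $r_i$ is constant, the tetrahedron is isosceles. The three half-turn symmetries then put $K$ in the form used in the proof of Theorem~\ref{thm:disphenoid}, and that theorem, via the Vi\`ete argument, gives four equal cross edges.

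\textbf{Step 3: proving the row sums are equal (main obstacle).} I would attack this geometrically, in the coordinates of the proof of Proposition~\ref{prop:regularity}. There $n_j=(2t/L^2)\sum_iq_{ij}m_i$, and the centroid $O$ of the Morley tetrahedron satisfies $\ell_j(O)=\tfrac14\sum_i\ell_j(M_i)=tr_j/4$.

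First I would compute $\sum_jS_j\ell_j(O)=3V$ and compare it with the Gram identities. Second, I would use the two orthogonal projections from Proposition~\ref{prop:defect}. Because $t_i=t_j$, the component of $M_j-M_i$ along $n_i+n_j$ vanishes, so every Morley edge $M_iM_j$ is orthogonal to $n_i+n_j$. Combining these six orthogonality relations with $\norm{M_i-M_j}=L$ should give
\[
L^2\ge\frac{4t^2}{q_{ij}+2}
\]
together with linear relations among the $n_k$. From these I would try to deduce that the vector $\sum_jn_j$, which is proportional to $\sum_i r_i m_i$, is orthogonal to every $m_i$, hence zero. Since the $m_i$ span $\R^3$ with the single relation $\sum_i m_i=0$, this forces all $r_i$ to be equal.

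If the geometric route stalls, the fallback is algebraic. Subtract the diagonal equations pairwise to get $4(\norm{k_i}^2-\norm{k_j}^2)=r_i^2-r_j^2$. Combine this with $HS=0$, i.e.\ $\sum_jS_j\ph(q_{ij})=0$ with $S=cK^{-1}\one$, to produce a polynomial system in the $q_{ij}$. Then show that any real solution with $1<q_{ij}<2$ has equal row sums, for instance by a convexity or Vi\`ete-type argument analogous to the one in Theorem~\ref{thm:disphenoid}. This step is where I expect the real difficulty to lie.
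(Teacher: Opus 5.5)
Your Steps 1--2 are correct and coincide with the end of the paper's proof. Since $KS=c\one$ and $K$ is invertible, constant row sums of $K$ are equivalent to equal face areas, hence to the tetrahedron being isosceles, and Theorem~\ref{thm:disphenoid} then gives four equal cross edges. If you write $q_{12},q_{34}=a\pm x$, $q_{13},q_{24}=b\pm y$, $q_{14},q_{23}=c\pm z$, then $r_1-r_2=2(y+z)$, $r_1-r_3=2(x+z)$ and $r_1-r_4=2(x+y)$, so your target is exactly $x=y=z=0$.

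The gap is Step~3, which is the whole content of the theorem, and you give no argument for it. The ingredients you list carry no new information:
\begin{itemize}
\item $\sum_jS_j\ell_j(O)=3V$ reduces to $tc=3V$, which is just the definition of $t$.
\item The relations $(M_j-M_i)\perp(n_i+n_j)$ and $\norm{M_j-M_i}=L$ are \eqref{eq:regularity} with constant $d$, restated.
\item The bounds $L^2\ge4t^2/(q_{ij}+2)$ are the defects $\varepsilon_{ij}\ge0$. Every solution satisfies them, so they cannot single out the symmetric ones.
\item $\sum_jn_j=0$ is a reformulation of the conclusion.
\end{itemize}
The algebraic fallback has the same problem. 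With $S\propto K^{-1}\one$ and constant $d$, the condition $HS=0$ holds automatically because $(4I_4-\one\one^T)\one=0$, so it adds no equation. Moreover, $1<q_{ij}<2$ alone does not make $K$ negative definite on $\one^\perp$; take $a=1.99$, $b=c=1.01$, $x=y=z=0$. That definiteness is the positivity input the argument actually needs.

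The missing idea is to work in the orthonormal Hadamard basis, with the constant vector first.
\begin{itemize}
\item There the compression of $K$ to $\one^\perp$ is $-A$, where $A=\begin{pmatrix}\mu_1&z&y\\z&\mu_2&x\\y&x&\mu_3\end{pmatrix}$ and $\mu_1=b+c-a-1$, with $\mu_2,\mu_3$ obtained cyclically. Lemma~\ref{lem:negative} gives $A\succ0$.
\item The regularity equation becomes $\widehat H=4\tau^2\widehat K\diag(0,1,1,1)\widehat K$. All diagonal entries of $H$ equal $1$, which is equivalent to $\widehat H_{01}+\widehat H_{23}=\widehat H_{02}+\widehat H_{13}=\widehat H_{03}+\widehat H_{12}=0$.
\item These three conditions read $(\mu_2+\mu_3-\mu_1)x=yz$ together with its two cyclic analogues.
\item If $xyz\ne0$, the three coefficients are forced to be positive, and then AM--GM gives $\det A\le0$, a contradiction. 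So two of $x,y,z$ vanish.
\item A single nonzero one, say $z$, forces $3c=a+b+1$. If $a\ne b$, this contradicts positivity of the principal $2\times2$ minor of $A$; if $a=b$, it leads to an impossible polynomial relation.
\end{itemize}
Without this, or some equivalent mechanism that exploits the definiteness of $K$ on $\one^\perp$, your proposal does not reach the theorem.
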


\begin{proof}
Write
\[
\begin{aligned}
q_{12}&=a+x,& q_{34}&=a-x,\\
q_{13}&=b+y,& q_{24}&=b-y,\\
q_{14}&=c+z,& q_{23}&=c-z.
\end{aligned}
\]
Here $a,b,c>1$. In the orthonormal Hadamard basis, with the constant vector first, $K$ becomes
\[
\widehat K=\begin{pmatrix}
\lambda_0&x&y&z\\
x&\lambda_1&-z&-y\\
y&-z&\lambda_2&-x\\
z&-y&-x&\lambda_3
\end{pmatrix},
\]
where $\lambda_0=1+a+b+c$ and
\[
\lambda_1=1+a-b-c,\;
\lambda_2=1-a+b-c,\;
\lambda_3=1-a-b+c.
\]
By Lemma~\ref{lem:negative}, $\mu_i=-\lambda_i>0$ and
\begin{equation}
A=\begin{pmatrix}\mu_1&z&y\\z&\mu_2&x\\y&x&\mu_3\end{pmatrix}\succ0.
\label{eq:equalA}
\end{equation}

Let $d_i=\tau>0$. In this basis, \eqref{eq:regularity} reads
\[
\widehat H=4\tau^2\widehat K\diag(0,1,1,1)\widehat K.
\]
The four diagonal entries of $H$ are equal. Equivalently,
\[
\widehat H_{01}+\widehat H_{23}
=\widehat H_{02}+\widehat H_{13}
=\widehat H_{03}+\widehat H_{12}=0.
\]
Multiplication gives
\begin{equation}
\begin{aligned}
(\mu_2+\mu_3-\mu_1)x&=yz,\\
(\mu_1+\mu_3-\mu_2)y&=xz,\\
(\mu_1+\mu_2-\mu_3)z&=xy.
\end{aligned}
\label{eq:equalxyz}
\end{equation}

Suppose first that $xyz\ne0$, and denote the three coefficients on the left by $\eta_1,\eta_2,\eta_3$. They have the same sign, and their sum is positive, so all are positive. Also
\[
x^2=\eta_2\eta_3,\; y^2=\eta_1\eta_3,\;
z^2=\eta_1\eta_2,\; xyz=\eta_1\eta_2\eta_3.
\]
Since $\mu_1=(\eta_2+\eta_3)/2$ and cyclically,
\[
\det A=-\frac38\left(\sum_{i\ne j}\eta_i^2\eta_j
-6\eta_1\eta_2\eta_3\right)\le0
\]
by the inequality between the arithmetic and geometric means. This contradicts \eqref{eq:equalA}. Thus one of $x,y,z$ vanishes, and \eqref{eq:equalxyz} implies that at least two vanish. Relabel so that $x=y=0$.

Assume $z\ne0$. The last equation in \eqref{eq:equalxyz} gives
\begin{equation}
3c=a+b+1.
\label{eq:equalc}
\end{equation}
If $a\ne b$, comparison of $\widehat H_{11}-\widehat H_{22}$ and $\widehat H_{03}$, using both \eqref{eq:normalgram} and \eqref{eq:regularity}, gives
\[
a^2+ab+b^2=3c^2+z^2.
\]
Put $m=(a+b)/2>1$ and $e=(a-b)/2$. The preceding equality and \eqref{eq:equalc} yield
\[
z^2-\lambda_1\lambda_2
=\frac{(m-1)(11m+7)}9+5e^2>0,
\]
contradicting the positive principal minor $\lambda_1\lambda_2-z^2$ in \eqref{eq:equalA}.

It remains to consider $a=b$. Set $r=c-1>0$ and $w=z^2/r^2>0$. Then
\[
a=b=1+\frac32r,\;
\lambda_1=\lambda_2=-r,\; \lambda_3=-2r.
\]
The trace equation gives $\tau^2=1/[3r^2(2+w)]$. Comparison of $\widehat H_{00}-\widehat H_{33}$ and of $\widehat H_{03}$ gives respectively
\[
r^2(r+2)=\frac{32(5+w)}{81(2+w)},
\;
3r^2(2+w)\bigl(6+r(3+w)\bigr)=16.
\]
Eliminating $r^2$ gives
\[
r(2w^2+16w+3)+12w+6=0,
\]
which is impossible. Therefore $z=0$ as well.

We have proved equality of the three pairs of opposite entries of $K$. Its row sums are constant, and \eqref{eq:regularity} gives $H\one=0$. Since the kernel of $H$ has dimension one, the four face areas are equal. Formula \eqref{eq:edge} proves that the original tetrahedron is isosceles, and Theorem~\ref{thm:disphenoid} completes the proof.
\end{proof}

\subsection{Exact disphenoid classification}
\begin{theorem}\label{t2:thm:classification}
Up to similarity and relabelling, exactly two isosceles tetrahedra have
a regular Morley tetrahedron: the regular tetrahedron and the tetrahedron
$T_2$ with
\[
 AB=CD=1,\;
 AC=AD=BC=BD=\sqrt{\frac{21+4\sqrt6}{45}}.
\]
Here \emph{isosceles} means that all three pairs of opposite edges
have equal lengths.
\end{theorem}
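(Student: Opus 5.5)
By Theorem~\ref{thm:disphenoid}, after relabelling we may assume the isosceles tetrahedron has four equal cross edges. Together with the opposite-edge equalities, this means it belongs to the family of Section~\ref{sec:counterexamples} with $AB=CD$: the coordinates $A=(-\tfrac12,0,0)$, $B=(\tfrac12,0,0)$, $C=(0,c,\tfrac12)$, $D=(0,c,-\tfrac12)$ with $c>0$, up to similarity. The matrix $K$ then has exactly the form \eqref{t2:eq:K}, with two parameters $p=q_{12}=q_{34}$ and $q=q_{13}=q_{14}=q_{23}=q_{24}$. The four faces are congruent, so all $t_i$ are equal and hence all $d_i$ equal some $\tau>0$.

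I will write the regularity condition \eqref{eq:regularity} on the eigenbasis used in the proof of Theorem~\ref{thm:T2}. On the constant vector, $H\one=0$ gives
\[
1+\ph(p)+2\ph(q)=0 .
\]
On $(1,1,-1,-1)^T$ we need
\[
-4\ph(q)=4\tau^2(1+p-2q)^2 ,
\]
and on $(1,-1,0,0)^T$ and $(0,0,1,-1)^T$ we need
\[
1-\ph(p)=4\tau^2(1-p)^2 ,
\]
that is, $(p+2)/2=4\tau^2$, so $8\tau^2=p+2$. This agrees with Corollary~\ref{cor:reflectiondistance}. Eliminating $\tau$ leaves two polynomial equations in $(p,q)$ with $1<p,q<2$:
\[
p^3-3p=2+6q-2q^3,\qquad (p+2)(1+p-2q)^2=4q^3-12q .
\]
The second equation is \eqref{t2:eq:tau} rewritten. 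One also needs the sign condition $B_{ij}<0$, but the converse half of Proposition~\ref{prop:regularity} makes this automatic once \eqref{eq:regularity} holds with $d>0$.

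The main work is to solve this system completely on the open square $(1,2)^2$. I would take the resultant with respect to $p$ (or with respect to $q$), factor it over $\mathbb Q$, and isolate its real roots in $(1,2)$ by Sturm sequences or exact rational interval checks, in the spirit of Section~\ref{sec:intervals}.

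I expect two relevant factors. One is the regular solution, $p=q=2\cos(\arccos(-1/3)/3)$, which is a root of the cubic with $\ph(q)=-1/3$. The other is the factor giving $q_0=2\sqrt6-3$, which means $q^2+6q-15=0$; at this root $p$ is determined as $p_0$ by \eqref{t2:eq:pquadratic}. Every other factor must be shown to have no root in $(1,2)$, or to produce $p\notin(1,2)$ when back-substituted. This exclusion step is the main obstacle: the resultant has moderate degree, and the spurious roots must be ruled out rigorously rather than numerically.

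Finally I will recover the shape. The disphenoid is determined up to similarity by $c$, through
\[
\ph(p)=\frac{1/4-c^2}{1/4+c^2}
\]
(as in the proof of Theorem~\ref{thm:T2}). The regular solution gives $c^2=1/4$, the regular tetrahedron. The other solution gives $c_0^2$ and $T_2$. Since the two edge ratios differ, these are two distinct similarity classes, which completes the classification.
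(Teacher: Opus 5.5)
Your route is the paper's route. Theorem~\ref{thm:disphenoid} puts $K$ in the two-parameter form with all $d_i=\tau$. The eigenvector computation then gives $8\tau^2=p+2$ and the same pair of equations
\[
A(p,q)=p^3+2q^3-3p-6q-2=0,\qquad B(p,q)=(p+2)(p+1-2q)^2+12q-4q^3=0,
\]
after which $p$ is eliminated.

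\textbf{The gap.} You stop at the step that constitutes the proof. You never compute the resultant, so your conclusion rests on the expectation that only the two ``right'' factors occur. The paper carries this out, and the step you call the main obstacle disappears:
\[
\Res_p(A,B)=-8q^2(q^2+6q-15)^2(3q^3-9q-2).
\]
Apart from $q^2$, which has no root in $(1,2)$, there is nothing to exclude, so no Sturm sequences or interval checks are needed.

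\textbf{Recovering $p$.} You should make the back-substitution explicit. Since $\ph$ is strictly decreasing on $(1,2)$, the kernel equation $1+\ph(p)+2\ph(q)=0$ determines $p$ uniquely from $q$.
\begin{itemize}
\item If $3q^3-9q-2=0$, then $\ph(q)=-1/3$, hence $\ph(p)=-1/3$ and $p=q$; this is the regular tetrahedron.
\item If $q=q_0=2\sqrt6-3$, then $p=p_0$. The identities in the proof of Theorem~\ref{thm:T2} show that $(p_0,q_0)$ also satisfies $B=0$ and is realized by $T_2$.
\end{itemize}
Citing the quadratic for $p_0$ alone does not justify this step; the monotonicity of $\ph$ is what rules out any other $p\in(1,2)$.

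\textbf{A small slip.} The regular value is $q=2\cos\bigl(\tfrac13\arccos\tfrac13\bigr)\approx1.834$, since the dihedral angle is $\arccos\tfrac13$. Your $2\cos\bigl(\tfrac13\arccos(-\tfrac13)\bigr)\approx1.608$ has $\ph(q)=+\tfrac13$ and is not a root of $3q^3-9q-2$. This does not affect the argument, because only the cubic factor is used.
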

\begin{proof}
By Theorem~\ref{thm:disphenoid}, after relabelling we may write
\begin{equation}\label{t2c:eq:K}
 K=\begin{pmatrix}
 1&p&q&q\\p&1&q&q\\q&q&1&p\\q&q&p&1
 \end{pmatrix},\; 1<p,q<2.
\end{equation}

The kernel condition is
\begin{equation}\label{t2c:eq:row}
 1+\ph(p)+2\ph(q)=0.
\end{equation}
On $(1,1,-1,-1)^T$, the eigenvalues of $K,H$ are
$1+p-2q,-4\ph(q)$. On the two-dimensional subspace spanned by
$(1,-1,0,0)^T,(0,0,1,-1)^T$, they are $1-p,1-\ph(p)$.
Since
\[
 1-\ph(p)=\frac{(p-1)^2(p+2)}2,
\]
regularity forces
\begin{equation}\label{t2c:eq:tau}
 \tau^2=\frac{p+2}{8},\;
 (p+2)(1+p-2q)^2=-8\ph(q).
\end{equation}

\medskip\noindent\emph{Elimination.}

Equations \eqref{t2c:eq:row} through \eqref{t2c:eq:tau} give
\begin{align*}
 A(p,q)&=p^3+2q^3-3p-6q-2=0,\\
 B(p,q)&=(p+2)(p+1-2q)^2+12q-4q^3=0.
\end{align*}
Their resultant in $p$ has the exact factorization
\begin{equation}\label{t2:eq:resultant}
 \Res_p(A,B)
 =-8q^2(q^2+6q-15)^2(3q^3-9q-2).
\end{equation}
It can be checked by computing the determinant of the Sylvester
matrix of the two cubics (script \texttt{check\_T2.py}).

Since $1<q<2$, there are only two possibilities.
If $3q^3-9q-2=0$, then $\ph(q)=-1/3$, and
\eqref{t2c:eq:row} gives $\ph(p)=-1/3$. The function $\ph$ is strictly
decreasing on $(1,2)$, so $p=q$; the original tetrahedron is regular.

Otherwise $q^2+6q-15=0$, and its only root in $(1,2)$ is $q_0=2\sqrt6-3$.
By \eqref{t2c:eq:row} and the strict monotonicity of $\ph$, the value
of $p$ is then uniquely determined; by \eqref{t2:eq:phi} it is the
number $p_0$ of \eqref{t2:eq:exactpq}. The proof of
Theorem~\ref{thm:T2} shows that this pair is realized by the
tetrahedron $T_2$. This completes the proof.
\end{proof}

\section{Symmetry results}\label{sec:symmetry}

\begin{theorem}[Two pairs of equal opposite edges]
\label{thm:twopairs}
Suppose the Morley tetrahedron of a nondegenerate tetrahedron is regular. If the original tetrahedron has two pairs of equal opposite edges, then it has four equal cross edges after relabelling.
\end{theorem}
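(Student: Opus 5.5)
Suppose $AB=CD$ and $AC=BD$. Such a tetrahedron always has a nontrivial symmetry: the half-turn about the common perpendicular of the midpoints of $AD$ and $BC$ realizes the vertex permutation $(A\,D)(B\,C)$. This isometry is not a reflection, so Theorem~\ref{thm:reflection} does not apply directly. Instead I use it to reduce the unknowns and then argue along the lines of Theorem~\ref{thm:equaldistances}.

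\emph{Step 1: reduce the data.} Label $A_1=A,\dots,A_4=D$. By Lemma~\ref{lem:equivariance}, the permutation $(1\,4)(2\,3)$ preserves $K$, $H$ and $S$, so $S_1=S_4$ and $S_2=S_3$. By \eqref{eq:distances} this gives $t_1=t_4$ and $t_2=t_3$, so $d_1=d_4=\alpha$ and $d_2=d_3=\beta$. In the $\pm$ decomposition $K$ is block diagonal: invariant $2\times2$ blocks on $\mathrm{span}\{e_1+e_4,e_2+e_3\}$ and on $\mathrm{span}\{e_1-e_4,e_2-e_3\}$, with $q_{12}=q_{34}$ and $q_{13}=q_{24}$ free and $q_{14},q_{23}$ separate. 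The same holds for $H=\ph[K]$ and for $4D^2-dd^T$.

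\emph{Step 2: the case $\alpha=\beta$.} Here all $t_i$ are equal, and Theorem~\ref{thm:equaldistances} immediately gives the isosceles case with four equal cross edges.

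\emph{Step 3: the case $\alpha\ne\beta$.} I would derive a contradiction, or an extra equality that forces a reflection. On the antisymmetric block, $dd^T$ vanishes, so \eqref{eq:regularity} reads $H_-=4K_-D_-^2K_-$ with $D_-=\diag(\alpha,\beta)$. Here $K_-$ and $H_-$ are explicit $2\times2$ matrices built from $1-q_{14}$, $1-q_{23}$ and $q_{12}-q_{13}$, and similarly for $\ph$. On the symmetric block, $H_+$ has $(S_1,S_2)$ in its kernel, and $H_+=K_+(4D_+^2-2\,ww^T)K_+$ with $w=(\alpha,\beta)^T$. Together these give six scalar equations in the six unknowns $q_{12},q_{13},q_{14},q_{23},\alpha,\beta$ (up to the scale already removed), plus the inequality constraints. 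The aim is to show that they force $q_{14}=q_{23}$, or $q_{12}=q_{13}$. The first gives all face areas equal and hence the isosceles case again. The second gives a reflection, after which Proposition~\ref{prop:defect} and Theorem~\ref{thm:reflection} finish. I would first try the determinant-and-trace identities of the antisymmetric $2\times2$ block. Writing $\det H_-=16\alpha^2\beta^2(\det K_-)^2$, and noting that $\det K_-<0$ by Lemma~\ref{lem:negative}, I expect a sign or AM--GM contradiction in the style of the proof of Theorem~\ref{thm:equaldistances}. The bound $d_i\le 1/\sqrt2$ and the ellipse inequality \eqref{eq:ellipse} should be useful here as side constraints.

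\emph{Main obstacle.} Step 3 is the hard part. Eliminating the variables is a genuine two-block polynomial system, and it may not collapse by positivity alone. If the determinant approach fails, I would fall back on a resultant computation: eliminate $\alpha^2$ and $\beta^2$ from the antisymmetric block, and then $S$ from the kernel condition. The goal is to factor the result into the factor $(q_{12}-q_{13})(q_{14}-q_{23})$ times factors with no roots in $(1,2)$. The absence of roots would be certified in exact arithmetic, as in the proof of Theorem~\ref{t2:thm:classification}.
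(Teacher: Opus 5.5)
Your Step~1 is the paper's own reduction. The paper labels so that the symmetry is $(1\,2)(3\,4)$ and builds $K$ from $p=q_{12}$, $s=q_{34}$, $q=q_{13}=q_{24}$, $r=q_{14}=q_{23}$, with $d=(a,a,b,b)^T$. The blocks then read $H_+=2K_+(a,-b)^T(a,-b)K_+$ and $H_-=4K_-\diag(a^2,b^2)K_-$. Your target dichotomy ($q_{14}=q_{23}$ or $q_{12}=q_{13}$) is the paper's ``$p=s$ or $q=r$'', and your endgame is correct. For $q_{12}=q_{13}$ you do not even need Theorem~\ref{thm:reflection}: $S_1=S_4$, $S_2=S_3$ and \eqref{eq:edge} give the four equal cross edges directly. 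But the dichotomy is the whole content of the theorem, and Step~3 does not prove it.

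Two things go wrong there. First, no contradiction is available in the case $\alpha\ne\beta$. $T_1$ has two pairs of equal opposite edges and $d_1=d_4\ne d_2=d_3$ (table in Section~\ref{sec:intervals}), so this case contains genuine solutions. Step~2 is valid, as an application of Theorem~\ref{thm:equaldistances}, but it does not touch the hard case. Any argument for Step~3 must use $q_{12}\ne q_{13}$ and $q_{14}\ne q_{23}$ essentially, since $T_1$ satisfies every side constraint you list, including $d_i\le1/\sqrt2$ and \eqref{eq:ellipse}. Second, your proposed mechanism rests on a sign error and would be empty anyway. $K_-$ is the compression of $K$ to a plane inside $\one^\perp$, so Lemma~\ref{lem:negative} makes it negative definite, and $\det K_-=(1-q_{14})(1-q_{23})-(q_{12}-q_{13})^2>0$, not $<0$. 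Moreover, $\det H_-=16\alpha^2\beta^2(\det K_-)^2$ is an identity between positive quantities and carries no sign information, whatever the sign of $\det K_-$.

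The paper closes this gap with a specific elimination followed by a multivariate positivity certificate. In its notation, assume $p>s$ and $q\ne r$, and put $P=p-1$, $R=s-1$, $h=q-r$, $k=q+r$, $u=P+R$, $v=PR$, $w=h^2/(PR)$.
\begin{enumerate}[label=(\roman*)]
\item The two diagonal minus equations are solved for $8a^2$ and $8b^2$; this part matches your fallback. Together with $h^2<PR$ this confines $w$ to $(0,1)$.
\item The off-diagonal minus equation, divided by $h\ne0$, gives $k^2$ as a rational function of $u,v,w$.
\item The plus block involves $ab$, not just $a^2,b^2$. Its diagonal entries are $[a(P+2)-bk]^2=(1-P)(P+2)^2/4$ and $[ak-b(R+2)]^2=(1-R)(R+2)^2/4$. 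Dividing by $P+2$ and $R+2$ and subtracting cancels $ab$, and division by $P-R\ne0$ leaves the single equation \eqref{eq:Ezero} in $(u,v,w)$.
\item The positive kernel of $H_+$ is used as an \emph{inequality}, $\ph(q)+\ph(r)<0$, that is, $\mathcal B(u,v,w)>0$.
\item Appendix~\ref{app:bernstein} proves incompatibility by an explicit identity $-E=B_0Q+R_0$ on $(0,1)^3$, with $Q\ge0$ and $R_0$ having nonnegative Bernstein coefficients.
\end{enumerate}

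Your fallback, as stated, does not reach this point. Eliminating $\alpha^2,\beta^2$ from the minus block and $S$ from the kernel condition yields only two relations among the four $q$'s, so there is no univariate polynomial to factor. You must bring in the plus-block equations. Even then, the paper's problem ends as an exclusion of one equation plus one inequality in three variables, not a root count in $(1,2)$ as in Theorem~\ref{t2:thm:classification}.
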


\begin{proof}
Relabel so that $A_1A_3=A_2A_4$ and $A_1A_4=A_2A_3$. The permutation $(1\,2)(3\,4)$ preserves all six edge lengths, hence is induced by an isometry. Equivariance gives
\begin{equation}
K=\begin{pmatrix}1&p&q&r\\p&1&r&q\\q&r&1&s\\r&q&s&1\end{pmatrix},
\; d=(a,a,b,b)^T,\; S=(\alpha,\alpha,\beta,\beta)^T,
\label{eq:twopairsK}
\end{equation}
with $1<p,q,r,s<2$ and $a,b,\alpha,\beta>0$. On the plus and minus eigenspaces of $(1\,2)(3\,4)$, the blocks of $K$ are
\[
K_+=\begin{pmatrix}1+p&q+r\\q+r&1+s\end{pmatrix},
\; K_-=\begin{pmatrix}1-p&q-r\\q-r&1-s\end{pmatrix}.
\]
Equation \eqref{eq:regularity} becomes
\begin{equation}
H_+=2K_+\binom{a}{-b}(a,-b)K_+,
\; H_-=4K_-\diag(a^2,b^2)K_-.
\label{eq:twopairsblocks}
\end{equation}

We show that $p=s$ or $q=r$. Otherwise, interchange the two vertex pairs so that $p>s$, and put
\[
P=p-1,\; R=s-1,\; h=q-r,\; k=q+r,
\; u=P+R,\; v=PR,\; w=\frac{h^2}{PR}.
\]
Then $0<R<P<1$, $h\ne0$, and $h^2<PR$ by Lemma~\ref{lem:negative}. The diagonal entries of the minus equation in \eqref{eq:twopairsblocks} give
\[
P^2(P+3)=8a^2P^2+8b^2h^2,
\; R^2(R+3)=8b^2R^2+8a^2h^2.
\]
Solving,
\begin{align}
8a^2&=\frac{R^2\{P^2(P+3)-h^2(R+3)\}}{P^2R^2-h^4},\notag\\
8b^2&=\frac{P^2\{R^2(R+3)-h^2(P+3)\}}{P^2R^2-h^4}.
\label{eq:twopairsab}
\end{align}
In particular,
\begin{equation}
0<w<\frac{R(R+3)}{P(P+3)}<1.
\label{eq:wbound}
\end{equation}
The off-diagonal minus equation, divided by $h$, gives
\begin{equation}
k^2=4+\frac{4(u^2+3u-2v)}{3(1+w)}-\frac{vw}{3}.
\label{eq:ksquared}
\end{equation}
The two diagonal plus equations are
\[
[a(P+2)-bk]^2=\frac{(1-P)(P+2)^2}{4},
\;
[ak-b(R+2)]^2=\frac{(1-R)(R+2)^2}{4}.
\]
Divide by $P+2$ and $R+2$ respectively, subtract to cancel $ab$, and use \eqref{eq:twopairsab}, \eqref{eq:wbound}, and \eqref{eq:ksquared}. After dividing by $P-R\ne0$, the resulting equation is
\begin{equation}
\begin{aligned}
0={}&[2u(2u+3-3w)-v(w^2+4w+11)]\\
&\cdot[v(u+5+w(u+1))+2wu(u+3)]\\
&-6(u+1)(v+2u+4)v(1-w)(1+w)^2.
\end{aligned}
\label{eq:Ezero}
\end{equation}

The positive kernel vector $(\alpha,\beta)^T$ of $H_+$, whose diagonal entries are positive, forces $\ph(q)+\ph(r)<0$. Since
\[
\ph(q)+\ph(r)=\frac{k(12-k^2-3h^2)}8,
\]
equation \eqref{eq:ksquared} makes this equivalent to
\begin{equation}
\mathcal B(u,v,w):=u^2+3u-6-6w+2v(w^2+w-1)>0.
\label{eq:Bpositive}
\end{equation}
The polynomial inequality in Appendix~\ref{app:bernstein} shows that \eqref{eq:wbound}, \eqref{eq:Ezero}, and \eqref{eq:Bpositive} are incompatible. Thus $p=s$ or $q=r$.

If $q=r$, formula \eqref{eq:edge} and the paired areas in \eqref{eq:twopairsK} immediately give the equality of the four cross edges. If $p=s$, the row sums of $H$ are equal. Their common value is zero, because $HS=0$ and $\one^TS>0$. Hence $S$ is constant, and \eqref{eq:edge} makes the tetrahedron isosceles. Theorem~\ref{thm:disphenoid} applies.
\end{proof}

\begin{theorem}[A single reflection]
\label{thm:reflection}
Suppose the Morley tetrahedron is regular. If the original tetrahedron has a reflection interchanging $A_1,A_2$ and fixing $A_3,A_4$, then it also has a reflection interchanging $A_3,A_4$ and fixing $A_1,A_2$. Consequently,
\[
A_1A_3=A_1A_4=A_2A_3=A_2A_4.
\]
\end{theorem}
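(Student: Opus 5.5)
The plan is to reduce everything to the matrix equation \eqref{eq:regularity} under the $(1\,2)$ symmetry, and then show that the extra symmetry $(3\,4)$ is forced.

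\emph{Normal form.} By Lemma~\ref{lem:equivariance} the reflection induces the permutation $(1\,2)$ on $K$, $S$ and $d$. Hence
\[
K=\begin{pmatrix}1&p&q&r\\p&1&q&r\\q&q&1&s\\r&r&s&1\end{pmatrix},\qquad d=(a,a,b,c)^T,\qquad S=(\alpha,\alpha,\beta,\gamma)^T .
\]
By Corollary~\ref{cor:reflectiondistance}, $8a^2=p+2$. The goal is to prove $q=r$. Once $q=r$, the matrix $H$ is invariant under $(3\,4)$, so its one-dimensional positive kernel gives $\beta=\gamma$. Formula \eqref{eq:edge} then yields the four equal cross edges, and the second reflection follows.

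\emph{Block decomposition.} The minus eigenspace of $(1\,2)$ is spanned by $(1,-1,0,0)^T$. On it, $K$ acts as $1-p$ and $H$ as $1-\ph(p)=(p-1)^2(p+2)/2$. Equation \eqref{eq:regularity} restricted there is exactly $8a^2=p+2$, so it carries no new information.

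The plus eigenspace is three-dimensional, with orthonormal basis $e=(1,1,0,0)^T/\sqrt2$, $e_3$, $e_4$. In this basis
\[
K_+=\begin{pmatrix}1+p&\sqrt2q&\sqrt2r\\ \sqrt2q&1&s\\ \sqrt2r&s&1\end{pmatrix},
\]
and $4D^2-dd^T$ restricts to a $3\times3$ matrix built from $a,b,c$. The equation $H_+=K_+(4D^2-dd^T)_+K_+$ gives six scalar equations in the six unknowns $p,q,r,s,b,c$. Here $a$ is already eliminated, and $H_+$ has entries $\ph$ of those values. The kernel condition supplies a further relation: $H_+$ is singular with a positive kernel vector.

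\emph{Forcing the symmetry.} The configuration is invariant under swapping $(q,b)\leftrightarrow(r,c)$ together with the indices $3\leftrightarrow4$. I would form the antisymmetric combinations, namely the differences of the $(e,e_3)$ and $(e,e_4)$ equations and of the $(e_3,e_3)$ and $(e_4,e_4)$ equations. Each should be divisible by $q-r$ or by $b-c$.

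Proposition~\ref{prop:defect} is the natural tool here. If $b=c$, the defect $\varepsilon_{34}$ vanishes, which is equivalent to the reflection $(3\,4)$. So it suffices to prove $t_3=t_4$, i.e.\ $b=c$, or directly $q=r$. Assuming $q\ne r$, I would divide out the factor $q-r$ and eliminate $b,c$ using the two diagonal equations, much as \eqref{eq:twopairsab} was obtained. This leaves polynomial relations in $p,q,r,s$.

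\emph{Main obstacle.} The real difficulty is the final incompatibility step. After elimination one expects a residual polynomial system, analogous to \eqref{eq:Ezero}, together with sign constraints. These constraints are $1<p,q,r,s<2$, the negativity of $K$ on $\one^\perp$ from Lemma~\ref{lem:negative}, positivity of the kernel vector (as in \eqref{eq:Bpositive}), the bound $d_i\le1/\sqrt2$, and the ratio bound of Proposition~\ref{prop:distancebounds}. One must show this system has no solution with $q\ne r$.

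I would first look for a clean factorization that gives a sign contradiction, in the spirit of the Vi\`ete or AM--GM arguments of Theorems~\ref{thm:disphenoid} and~\ref{thm:equaldistances}. Failing that, I would fall back on a certified Bernstein or interval positivity computation over the compact parameter box, as in Appendix~\ref{app:bernstein}.
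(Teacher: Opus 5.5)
\textbf{There is a genuine gap: the decisive step is left unproved, and the route you sketch for it does not work as stated.}

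Your normal form, the relation $8a^2=p+2$, and the closing step (from $q=r$ to $\beta=\gamma$ and four equal cross edges via \eqref{eq:edge}) agree with the paper. The problem is the step that carries the theorem: showing that no admissible solution breaks the $(3\,4)$ symmetry. You only describe this as something one must show.

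Your suggested mechanism fails. An equation anti-invariant under $(q,b)\leftrightarrow(r,c)$ vanishes on $\{q=r,\ b=c\}$. So it lies in the ideal generated by $q-r$ and $b-c$, but it need not be divisible by either; $qb-rc$ is an example. There is therefore no factor $q-r$ to divide out.

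The paper instead splits on $b\ne c$ versus $b=c$. For $b\neq c$ (Appendix~\ref{app:reflection}) it works in the regular reference frame $e_i$. There $n_3\pm n_4$ are perpendicular vectors of known lengths in a coordinate plane.
\begin{itemize}
\item This gives a signed parameter $T$ with $s=2(1-2T^2)/(1+2T^2)$, $q+r=\alpha R$ and $q-r=\beta U$.
\item After relabelling so that $b>c$, the sign $T>0$ is forced by $S_1>0$ and $\det K<0$.
\item The remaining conditions become three equations linear in $X=R^2$, $Y=U^2$, together with $|n_1|=1$.
\item Resultants in $p$ and a Sturm count leave the root of $2T^2+8T-1$, where $p=9/7$ violates $G_1=0$, and two roots of a degree-$18$ polynomial in $(0,1/\sqrt6)$.
\item The paper discards these two candidates with the face-area inequality $\ph(q)\ph(r)-\ph(s)\bigl(1+\ph(p)\bigr)/2>0$, evaluated in rational interval arithmetic, not by further algebra.
\end{itemize}
So positivity of the areas enters twice and in a specific quantitative form. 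Listing it among your constraints does not replace that computation. You should not expect a clean sign contradiction from the equations and box constraints alone.

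There is also a gap in the easy case. Proposition~\ref{prop:defect} does not say that $t_3=t_4$ gives $\varepsilon_{34}=0$; equal distances are necessary for equality, not sufficient. With $d_3=d_4=b$ one has $\varepsilon_{34}=s+2-8b^2$, and nothing you wrote forces this to vanish.

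The paper's argument here is short. Since $|n_3|=|n_4|=1$, we have $(n_3+n_4)\cdot(n_3-n_4)=0$. In the reference coordinates with $b=c$ this reads $4a^2\bigl(q+r-(1+s)b/a\bigr)(q-r)=0$. The first factor would make $n_3+n_4=0$, that is, $\ph(s)=-1$, which is impossible for $1<s<2$. Hence $q=r$.
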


\begin{proof}
The first reflection gives
\[
K=\begin{pmatrix}1&p&q&r\\p&1&q&r\\q&q&1&s\\r&r&s&1\end{pmatrix},
\; d=(a,a,b,c)^T.
\]
Appendix~\ref{app:reflection} excludes $b\ne c$ by exact polynomial elimination and the positivity of the face areas. If $b=c$, the identity
\[
(n_3+n_4)\cdot(n_3-n_4)=0
\]
in the coordinates used there gives $(q+r-(1+s)b/a)(q-r)=0$. The first factor cannot vanish: it would make $n_3+n_4=0$, contrary to $1<s<2$. Thus $q=r$. The matrix $H$ and its positive kernel are then also invariant under $(3\,4)$. The corresponding vertex permutation is an isometry by \eqref{eq:edge}, and is a reflection because it fixes two vertices and interchanges the other two.
\end{proof}

In terms of Proposition~\ref{prop:defect}, Theorem~\ref{thm:reflection} says that $\varepsilon_{12}=0$ implies $\varepsilon_{34}=0$. To prove Conjecture~\ref{conj:four} it would therefore be enough to show that one of the six defects $\varepsilon_{ij}$ vanishes. Note also that neither Theorem~\ref{thm:twopairs} nor Theorem~\ref{thm:reflection} covers the case of only one pair of equal opposite edges.

\subsection{Classification with four equal cross edges}\label{sym:section}

\begin{theorem}
\label{sym:thm:classification}
Suppose that $AC=AD=BC=BD$ and that $\M(ABCD)$ is regular.
Up to similarity and relabelling, precisely the following three
possibilities occur:
\begin{enumerate}
\item the regular tetrahedron;
\item the disphenoid $T_2$ with
\[
AB=CD=1,\;
AC=AD=BC=BD=\sqrt{\frac{21+4\sqrt6}{45}};
\]
\item a unique nonisosceles class $T_1$, with normalized edges
\[
AB=1,\; CD=\mu_*,\;
AC=AD=BC=BD=\lambda_*,
\]
where
\[
\lambda_*=1.579392054290007551\ldots,\;
\mu_*=2.351892333661864551\ldots.
\]
An exact algebraic specification of $T_1$ is given below.
\end{enumerate}
\end{theorem}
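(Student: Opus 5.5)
Under the hypothesis $AC=AD=BC=BD$ the tetrahedron has the two reflections interchanging $A,B$ and $C,D$. Label $A=A_1$, $B=A_2$, $C=A_3$, $D=A_4$. By Lemma~\ref{lem:equivariance} the data are then invariant under $(1\,2)$ and $(3\,4)$:
\[
K=\begin{pmatrix}1&p&q&q\\p&1&q&q\\q&q&1&s\\q&q&s&1\end{pmatrix},\quad d=(a,a,b,b)^T,\quad S=(\alpha,\alpha,\beta,\beta)^T.
\]
Corollary~\ref{cor:reflectiondistance}, applied to both reflections, gives $8a^2=p+2$ and $8b^2=s+2$. So the unknowns reduce to the three numbers $p,s,q\in(1,2)$.

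I would split \eqref{eq:regularity} along the invariant subspaces of the Klein group $\langle(1\,2),(3\,4)\rangle$.
\begin{itemize}
\item On $(1,-1,0,0)^T$ the relation is $1-\ph(p)=4a^2(1-p)^2$. This is automatic from $8a^2=p+2$, and likewise for $s$ on $(0,0,1,-1)^T$.
\item The remaining block lives on $\mathrm{span}\{(1,1,0,0)^T,(0,0,1,1)^T\}$. It is the $2\times2$ identity
\[
H_+=K_+\bigl(4\diag(a^2,b^2)\cdot 2-2\,(a,b)^T(a,b)\bigr)K_+,
\]
with $K_+=\begin{pmatrix}1+p&2q\\2q&1+s\end{pmatrix}$ and $H_+$ formed from $\ph$ of the entries (up to the normalization by the basis vectors' lengths).
\item This block gives three scalar equations in $(p,s,q)$. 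One of them can be replaced by the kernel condition $\det H_+=0$, i.e.\ $(1+\ph(p))(1+\ph(s))=4\ph(q)^2$. Positivity of $S$ additionally forces $\ph(q)<0$ and $1+\ph(p)>0$, $1+\ph(s)>0$.
\item I would clear the square roots $a=\sqrt{(p+2)/8}$ and $b=\sqrt{(s+2)/8}$. The natural route is to keep $a,b$ as variables with the quadratic relations $8a^2=p+2$, $8b^2=s+2$, giving a polynomial system in $(p,s,q,a,b)$ with positivity side conditions.
\end{itemize}

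Next I would eliminate by resultants, in the manner of Theorem~\ref{t2:thm:classification}. Two diagonal equations of the $+$ block are linear in $ab$ after expansion. First eliminate $ab$, then $q$ (via $\ph(q)$ from the kernel relation), obtaining a single polynomial relation $R(p,s)=0$ together with a second relation.

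A further resultant in $s$ gives a univariate polynomial in $p$. I expect it to factor into three kinds of pieces.
\begin{itemize}
\item A factor corresponding to $p=s$. This is the isosceles case: the row sums of $H$ are then equal, $S$ is constant, and Theorem~\ref{t2:thm:classification} yields exactly the regular tetrahedron and $T_2$.
\item An irreducible factor, presumably the sextic mentioned in the abstract, whose real roots in $(1,2)$ give $T_1$ and its relabelled copy with $p$ and $s$ interchanged.
\item Spurious factors. These are ruled out by exact real root isolation, by Sturm sequences on $(1,2)$, or by violation of the sign conditions ($d>0$, $\ph(q)<0$, $S>0$).
\end{itemize}

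For each surviving root, the converse half of Proposition~\ref{prop:regularity} confirms realization. Combined with the certified box of Section~\ref{sec:intervals}, this identifies the root with $T_1$. The edge values follow from \eqref{eq:reconstruction}, computed to the stated precision by root refinement.

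The main obstacle is this elimination step: handling the square roots and the sign bookkeeping so that no spurious branch survives, and proving that exactly one admissible nonisosceles solution exists up to the swap $p\leftrightarrow s$. I would first try Sturm sequences combined with checking the sign conditions at isolating intervals, all in exact rational arithmetic.
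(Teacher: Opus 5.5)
Your reduction matches the paper's: the invariant form of $K$, $d=(a,a,b,b)^T$, $8a^2=p+2$ and $8b^2=s+2$ (citing Corollary~\ref{cor:reflectiondistance} is a legitimate shortcut), and disposing of $p=s$ through Theorem~\ref{t2:thm:classification}. The gap is that the substance of the theorem, existence and above all uniqueness of the nonisosceles class, is left to an elimination you have not carried out and whose output you only guess.

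Before that, the $+$ block is misstated in a way that matters. On $\mathrm{span}\{(1,1,0,0)^T,(0,0,1,1)^T\}$ the middle factor is $4\diag(a^2,b^2)-2(a,b)^T(a,b)=2(a,-b)^T(a,-b)$, which has rank one. With your $8\diag(a^2,b^2)-2(a,b)^T(a,b)$ it would be positive definite. Since $K_+$ is indefinite ($K$ is positive on $\one$ and, by Lemma~\ref{lem:negative}, negative on $(1,1,-1,-1)^T$), $H_+$ would then be invertible and no kernel vector $S$ could exist. The rank-one form is exactly what you need. With $U=a(1+p)-2bq$ and $V=2aq-b(1+s)$ the block reads $U^2=(p+1)^2(2-p)/4$, $V^2=(s+1)^2(2-s)/4$, $UV=\ph(q)$. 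Then $\det H_+=0$ holds automatically and carries no information, while the kernel is spanned by $(V,-U)$, so $S>0$ is the sign condition $UV<0$. Replacing the off-diagonal equation by $\det H_+=0$ would keep only $UV=\pm\ph(q)$ and discard precisely this sign.

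The paper resolves what you call the main obstacle as follows.
\begin{enumerate}
\item Signs. From $UV<0$ it gets $q>\sqrt3$, and then $a(1+p)<3/\sqrt2<2bq$ gives $U<0<V$, which fixes every square root.
\item Substitution. Put $x=\sqrt{2(2-p)/(p+2)}$ and $y=\sqrt{2(2-s)/(s+2)}$. Then $U=-a(p+1)x$ and $V=b(s+1)y$. The two diagonal equations become $h(x)=h(y)=\gamma$ with $h(z)=(6-z^2)(1+z)/(2+z^2)^2$, together with an explicit formula for $q$. The off-diagonal equation becomes rational in $x,y,\gamma$.
\item Branch split. The factor $x-y$ of $h(x)-h(y)$ separates off the disphenoid branch.
\item Elimination. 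For $x\neq y$, the symmetric functions $u=x+y$, $v=xy$ satisfy $F=G=0$. $\Res_u(F,G)$ is the sextic $P(v)$ times factors without zeros in $(0,2/3)$. So the sextic is a polynomial in the symmetric quantity $v=xy$, not in $p$. Any univariate polynomial in $p$ you obtain must vanish at both parameters $p_*$ and $s_*$ of $T_1$.
\item Uniqueness. It rests on explicit certified facts. $P'<0$ on $(0,2/3)$ and $P$ changes sign there, so there is exactly one admissible $v$. The linear subresultant is $A(v)u+B(v)$ with $A<0$ there, so there is exactly one $u$. Finally one checks that $u>0$, that $x,y$ are real, distinct and in $(0,\sqrt{2/3})$, and that $1<q<2$.
\end{enumerate}

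Your route of eliminating the linearly occurring $ab$ against $(ab)^2=(p+2)(s+2)/64$ might be pushed through. But squaring admits the inadmissible sign $ab<0$, on top of the lost sign of $UV$. Until the polynomials are actually computed, every spurious root in the admissible ranges is excluded, and the surviving solutions are counted, the word ``unique'' in item~3 is unproved.
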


The edge assumption gives the reflections $(1\,2)$ and $(3\,4)$,
which are inherited by the Morley construction. Consequently
\begin{equation}
K=\begin{pmatrix}
1&p&q&q\\p&1&q&q\\q&q&1&s\\q&q&s&1
\end{pmatrix},\;
d=(a,a,b,b)^T,
\label{sym:eq:two}
\end{equation}
where $1<p,q,s<2$ and $a,b>0$.
Choose the regular reference vectors $e_i$ of Appendix~\ref{app:reflection}; the normals are $n_i=\sum_jq_{ij}d_je_j$. Write
\[
U=a(1+p)-2bq,\; V=2aq-b(1+s).
\]
The four normals are
\[
\begin{aligned}
n_1&=(\sqrt2a(1-p),0,U),&
n_2&=(-\sqrt2a(1-p),0,U),\\
n_3&=(0,\sqrt2b(1-s),V),&
n_4&=(0,-\sqrt2b(1-s),V).
\end{aligned}
\]
Comparison of $|n_1-n_2|^2$ and $|n_3-n_4|^2$ gives
\begin{equation}
8a^2=p+2,\; 8b^2=s+2.
\label{sym:eq:ab}
\end{equation}
Their unit lengths and their cross scalar products give
\begin{equation}
U^2=\frac{(p+1)^2(2-p)}4,\;
V^2=\frac{(s+1)^2(2-s)}4,\;
UV=\ph(q).
\label{sym:eq:UV}
\end{equation}

The positive area vector has the form $(\alpha,\alpha,\beta,\beta)$,
so $\alpha U+\beta V=0$. Thus $UV<0$ and $q>\sqrt3$.
Moreover
\[
a(1+p)<\frac3{\sqrt2},\;
2bq>2\sqrt{\frac38}\sqrt3=\frac3{\sqrt2}.
\]
Hence $U<0$ and $V>0$. These signs tell us which square roots to take below.

Set
\begin{equation}
x=\sqrt{\frac{2(2-p)}{p+2}},\;
y=\sqrt{\frac{2(2-s)}{s+2}}.
\label{sym:eq:xy}
\end{equation}
Then
\[
0<x,y<\sqrt{2/3},\;
p=\frac{2(2-x^2)}{2+x^2},\;
s=\frac{2(2-y^2)}{2+y^2},\;
a=\frac1{\sqrt{2+x^2}},\; b=\frac1{\sqrt{2+y^2}}.
\]
Equations \eqref{sym:eq:UV}, with the signs just established, become
\[
2bq=a(p+1)(1+x),\;
2aq=b(s+1)(1+y).
\]
Define
\[
h(z)=\frac{(6-z^2)(1+z)}{(2+z^2)^2}.
\]
We therefore obtain
\begin{equation}
h(x)=h(y)=\gamma,\;
q=\frac{\gamma}{2}\sqrt{(2+x^2)(2+y^2)}.
\label{sym:eq:h}
\end{equation}
The remaining equation $UV=\ph(q)$ is equivalent to
\begin{equation}
q^2-3=\frac{4\gamma xy}{(1+x)(1+y)}.
\label{sym:eq:cross}
\end{equation}
All divisions have positive denominators. Conversely,
\eqref{sym:eq:h} through \eqref{sym:eq:cross} with the indicated ranges reconstruct
the normals and a positive kernel: take face weights proportional
to $(V,V,-U,-U)$. Thus these equations are sufficient as well.

\subsubsection*{The unequal-parameter branch}

Assume $p\ne s$, or equivalently $x\ne y$. Put
\[
u=x+y,\; v=xy,\; 0<v<2/3.
\]
Subtracting $h(x)=h(y)$ and dividing by $x-y$ gives
\begin{equation}
\begin{split}
F(u,v)={}&-6u^3-(6v+4)u^2+(v^2+12v-28)u\\
&+v^3+2v^2-20v+24=0.
\end{split}
\label{sym:eq:F}
\end{equation}
To record the other equation compactly, set
\[
W=4+2u^2-4v+v^2,\;
Z=36-6u^2+12v+v^2,\; T=1+u+v,
\]
and
\[
\begin{split}
C(u,v)={}&6u^4+(6v-4)u^3+(-v^2-24v+20)u^2\\
&+(-v^3-22v^2+36v+24)u+2v^3+4v^2-40v+48.
\end{split}
\]
Here $W=(2+x^2)(2+y^2)$, $Z=(6-x^2)(6-y^2)$,
$T=(1+x)(1+y)$, and
\[
C=(6-x^2)(1+x)(2+y^2)^2
+(6-y^2)(1+y)(2+x^2)^2.
\]
Equation \eqref{sym:eq:cross} is therefore
\begin{equation}
G(u,v):=ZT^2W-12W^2T-8vC(u,v)=0.
\label{sym:eq:G}
\end{equation}

The following identities are checked in \texttt{check\_T1.py}. The resultant is
\begin{equation}
\Res_u(F,G)
=3538944(v-2)^4(v+2)^2(v^2-6)P(v),
\label{sym:eq:resultant}
\end{equation}
where
\begin{equation}
P(v)=v^6-132v^5+4116v^4-33792v^3
+116736v^2-212736v+20736.
\label{sym:eq:P}
\end{equation}
Every factor apart from $P$ is nonzero on $(0,2/3)$.

\begin{lemma}
The polynomial $P$ has exactly one root in $(0,2/3)$.
For this value of $v$, equations $F=G=0$ have exactly one common
value of $u$, and that value is positive.
\end{lemma}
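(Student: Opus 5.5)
The lemma has two parts: locate the roots of $P$ in $(0,2/3)$, and show that $F(\cdot,v_*)$ and $G(\cdot,v_*)$ share exactly one root, which is positive. Both reduce to exact rational computations in the spirit of \texttt{check\_T1.py}.

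\textbf{Existence of a root.} Evaluate $P$ at rational points. We have $P(0)=20736>0$. For $P(2/3)$ the dominant terms are
\[
-212736\cdot\tfrac23\approx-141824,\qquad 116736\cdot\tfrac49\approx 51883,
\]
and the smaller ones are $-33792\cdot\tfrac{8}{27}\approx-10013$ and $4116\cdot\tfrac{16}{81}\approx 813$. Hence $P(2/3)<0$, and there is a root in $(0,2/3)$. I will bisect with rational endpoints until the root $v_*$ is isolated in an interval of width about $10^{-12}$.

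\textbf{Uniqueness of the root.} I will show $P'<0$ on $[0,2/3]$, so $P$ is strictly decreasing there. We have
\[
P'(v)=6v^5-660v^4+16464v^3-101376v^2+233472v-212736 .
\]
The only positive terms are $233472v+16464v^3+6v^5$. At $v=2/3$ their sum is below
\[
155648+4878+1<212736,
\]
so $P'(v)<0$ on the whole interval, by monotonicity of each positive term. A Sturm sequence would give the same conclusion, but this bound already suffices.

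\textbf{The common root in $u$.} Fix $v=v_*$. The resultant identity \eqref{sym:eq:resultant} gives $\Res_u(F,G)(v_*)=0$. Since the leading coefficient $-6$ of $F$ in $u$ is a nonzero constant, $F(\cdot,v_*)$ and $G(\cdot,v_*)$ do share at least one root $u$.

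To single it out, I will compute the first subresultant $S_1(v)=\alpha(v)u+\beta(v)$ of $F$ and $G$ with respect to $u$ by exact polynomial arithmetic, with $\deg_u F=3$ and $G$ of higher degree in $u$. I will then check that $\alpha(v_*)\neq0$ by showing that $\gcd(\alpha,P)=1$, or alternatively by interval evaluation on the isolating interval. Given this, the gcd of the two specialized polynomials is linear, so the common root is unique and equals
\[
u_*=-\beta(v_*)/\alpha(v_*).
\]
Interval evaluation on the tiny interval around $v_*$ then shows $u_*>0$. As a cross-check I expect $u_*\approx$ the value of $x+y$ computed from the $T_1$ entries $p\approx1.8449$ and $s\approx1.6487$, through \eqref{sym:eq:xy}.

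\textbf{Fallback if $\alpha(v_*)=0$.} If this happens, I will instead count the real roots of $F(\cdot,v_*)$, which is a cubic. The discriminant and interval root isolation locate them, and then I evaluate $G$ at each root enclosure to show that it is nonzero on all but one of them.

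\textbf{Main obstacle.} The hard step is computing and certifying the subresultant, since $G$ has fairly high degree. The coefficient swell makes hand work impractical, so this step relies on the exact script.
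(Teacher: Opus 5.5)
Your plan is essentially the paper's proof. It shows $P'<0$ on $(0,2/3)$ by discarding the negative terms and uses the sign change of $P$ on $[0,2/3]$. It then uses the linear subresultant, which the paper gives explicitly as $-6144(v-2)(v+2)\{A(v)u+B(v)\}$ with $A<0$ on all of $(0,2/3)$ by the same crude bound, so your fallback is never needed; hence $u=-B(v)/A(v)>0$ because $B>0$ on $(0.10,0.11)$.

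There are two harmless numerical slips. Your tally for $P(2/3)$ leaves out the constant term $20736$; the exact value is $-57169664/729$, which is still negative. Also, the positive part of $P'(2/3)$ is $160527.01\ldots$, marginally above your stated bound $155648+4878+1$, though still far below $212736$.
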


\begin{proof}
For $0<v<2/3$, dropping the negative terms in $P'$ gives
\[
\begin{split}
P'(v)
&<6(2/3)^5+16464(2/3)^3+233472(2/3)-212736\\
&=-\frac{4228928}{81}<0.
\end{split}
\]
Also $P(0)=20736$ and $P(2/3)=-57169664/729$.
This proves the first assertion.

The linear subresultant of $F$ and $G$ is
\[
-6144(v-2)(v+2)\{A(v)u+B(v)\},
\]
where
\[
\begin{split}
A(v)={}&64v^6-2900v^5+31506v^4-98472v^3\\
&+73368v^2+395904v-852672,
\end{split}
\]
\[
\begin{split}
B(v)={}&64v^7-2737v^6+25186v^5-71268v^4\\
&+33144v^3+247008v^2-683136v+614400.
\end{split}
\]
Throughout $(0,2/3)$,
\[
A(v)<64(2/3)^6+31506(2/3)^4+73368(2/3)^2
+395904(2/3)-852672
=-\frac{400876352}{729}<0.
\]
Thus the linear subresultant is nonzero at the root of $P$.
Since the resultant vanishes, the common root is unique and is
\begin{equation}
u=-B(v)/A(v).
\label{sym:eq:u}
\end{equation}
Rational substitution gives
\[
P(0.1031379)>0,\; P(0.1031380)<0.
\]
In particular, $0.10<v<0.11$. On this interval
\[
B(v)>614400-683136(0.11)-71268(0.11)^4
-2737(0.11)^6>0.
\]
Hence $u>0$.
\end{proof}

The unordered pair $\{x,y\}$ is now uniquely determined as the roots
of $z^2-uz+v=0$. Exact interval substitution in \eqref{sym:eq:u} gives
\[
0.67384<u<0.67386,\;
0.2350<\min(x,y)<0.2351,\;
0.4387<\max(x,y)<0.4389.
\]
In particular, the roots are real, positive, distinct, and less than
$\sqrt{2/3}$. Equation \eqref{sym:eq:h} gives $1.84<q<1.85$.
Thus the construction is admissible and proves existence.

This gives an exact description of $T_1$:
take the unique root $v$ of \eqref{sym:eq:P} in $(0,2/3)$, set
$u=-B(v)/A(v)$, let $x<y$ be the roots of $z^2-uz+v$, and recover
$p,s,q$ from \eqref{sym:eq:xy} and \eqref{sym:eq:h}.
Numerically,
\[
\begin{aligned}
v&=0.103137987399601104094765707121\ldots,\\
u&=0.673845625206396229272045732691\ldots,\\
p&=1.892477377432828212380883568822\ldots,\\
s&=1.648726879892260334958056852158\ldots,\\
q&=1.844867969052390261064873920274\ldots .
\end{aligned}
\]
To recover the original edges, put $h_p=\ph(p)$, $h_s=\ph(s)$, and
\[
z^2=\frac{1-h_p}{4(1+h_p)},\;
b_0^2=z^2\frac{1+h_s}{1-h_s}.
\]
The vertices
\[
A=(-1/2,0,0),\; B=(1/2,0,0),\;
C=(0,z,b_0),\; D=(0,z,-b_0)
\]
have the required normal Gram matrix, and
\[
\mu_*=2b_0,\;
\lambda_*=\sqrt{\frac14+z^2+b_0^2}.
\]
Interchanging $x,y$ interchanges the two distinguished opposite edges,
so it gives the same similarity class.

\subsubsection*{The equal-parameter branch}
If $p=s$, the original tetrahedron is a disphenoid. Theorem~\ref{t2:thm:classification} gives precisely the regular tetrahedron and $T_2$. Together with the unequal-parameter branch, this completes the proof of Theorem~\ref{sym:thm:classification}.

\begin{corollary}\label{sym:cor:equivalence}
Conjectures~\ref{conj:four} and~\ref{conj:classification} are equivalent.
\end{corollary}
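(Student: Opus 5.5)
The plan is to prove the two implications separately.

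\emph{Conjecture~\ref{conj:classification} implies Conjecture~\ref{conj:four}.} Assume the classification, and let $ABCD$ have a regular Morley tetrahedron. Then $ABCD$ is similar, after relabelling, to the regular tetrahedron, to $T_1$ or to $T_2$. Each of these has four equal cross edges. For the regular tetrahedron this is trivial. For $T_1$ it holds by construction in Theorem~\ref{thm:T1}, and for $T_2$ it holds by Theorem~\ref{thm:T2}: there $AC=AD=BC=BD$. Similarity preserves ratios of edge lengths, and relabelling only changes which pair of opposite edges is distinguished. So the conclusion of Conjecture~\ref{conj:four} holds.

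\emph{Conjecture~\ref{conj:four} implies Conjecture~\ref{conj:classification}.} Assume Conjecture~\ref{conj:four}, and let $ABCD$ have a regular Morley tetrahedron. After relabelling we get $AC=AD=BC=BD$. Theorem~\ref{sym:thm:classification} then applies directly. Up to similarity and relabelling, $ABCD$ is the regular tetrahedron, $T_2$, or the class $T_1$.

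Conversely, the regular tetrahedron, $T_1$ and $T_2$ really do have regular Morley tetrahedra. This follows from the trisector theorem in the regular case by symmetry (or from the regular row of Section~\ref{sec:intervals}), and from Theorems~\ref{thm:T1} and~\ref{thm:T2}. Hence the list in Conjecture~\ref{conj:classification} is exactly the set of solutions. The three classes are distinct: $T_2$ is isosceles but not regular, and $T_1$ is not isosceles.

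I expect no real obstacle, since the content is carried by Theorem~\ref{sym:thm:classification}. The only point needing care is a small consistency check. The $T_1$ of Theorem~\ref{thm:T1}, obtained via the interval argument, must be the same similarity class as the unique nonisosceles class of Theorem~\ref{sym:thm:classification}. This follows from the uniqueness part of that theorem, together with the agreement of the numerical values of $p,s,q$ with the table in Section~\ref{sec:intervals}.
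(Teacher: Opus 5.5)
Your proposal is correct and matches the paper's proof: each of the three classes has four equal cross edges, and conversely Theorem~\ref{sym:thm:classification} classifies every solution with that edge property. Your extra check that the two descriptions of $T_1$ agree is harmless, but it needs no numerics. It follows from uniqueness alone, because the $T_1$ of Theorem~\ref{thm:T1} is nonisosceles, has four equal cross edges, and has a regular Morley tetrahedron.
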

\begin{proof}
Each of the three classes has four equal cross edges. Conversely, Theorem~\ref{sym:thm:classification} classifies all solutions with that edge property.
\end{proof}

\subsection{Tetrahedra with a nontrivial symmetry}

Combining the last three theorems, we can treat every tetrahedron that has a symmetry.

\begin{corollary}\label{cor:symmetric}
Suppose that $\M(ABCD)$ is regular and that some isometry other than the identity maps the vertex set of $ABCD$ onto itself. Then $ABCD$ has four equal cross edges, and up to similarity it is the regular tetrahedron, $T_1$ or $T_2$. Consequently Conjectures~\ref{conj:four} and~\ref{conj:symmetry} are equivalent, and a counterexample to Conjecture~\ref{conj:four} must have trivial symmetry group.
\end{corollary}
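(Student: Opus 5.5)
We classify the nontrivial isometries $g$ that preserve the vertex set, through the permutation $\pi$ of $\{A,B,C,D\}$ that $g$ induces. An isometry of a nondegenerate tetrahedron is determined by its action on the four affinely independent vertices, so $\pi\neq\mathrm{id}$. Replacing $g$ by a suitable power, we may assume $\pi$ has prime order, so it is a transposition, a double transposition, or a $3$-cycle. A $4$-cycle $\pi$ has square equal to a double transposition, so it falls under the second case.

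\emph{Transposition.} If $\pi=(A_1\,A_2)$, then $g$ fixes $A_3,A_4$ and the midpoint of $A_1A_2$. Thus $g$ fixes the plane through these three points pointwise, and being nontrivial it is the reflection in that plane. Theorem~\ref{thm:reflection} now gives the second reflection and four equal cross edges.

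\emph{Double transposition.} If $\pi=(1\,2)(3\,4)$, then $g$ preserves all six edge lengths, so $A_1A_3=A_2A_4$ and $A_1A_4=A_2A_3$. The tetrahedron therefore has two pairs of equal opposite edges, and Theorem~\ref{thm:twopairs} gives four equal cross edges.

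\emph{$3$-cycle.} If $\pi=(2\,3\,4)$ fixes $A_1$, then $A_1A_2=A_1A_3=A_1A_4$ and the face $A_2A_3A_4$ is equilateral. The reflection in the plane through $A_1$, $A_2$ and the midpoint of $A_3A_4$ swaps $A_3,A_4$ and fixes $A_1,A_2$. We are then back in the transposition case, and Theorem~\ref{thm:reflection} applies.

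In every case the tetrahedron has four equal cross edges, and Theorem~\ref{sym:thm:classification} identifies it up to similarity as the regular tetrahedron, $T_1$ or $T_2$.

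For the equivalence of the conjectures, observe that four equal cross edges give two reflections, so Conjecture~\ref{conj:four} implies Conjecture~\ref{conj:symmetry}. Conversely, Conjecture~\ref{conj:symmetry} supplies a nontrivial symmetry, and the argument above then yields four equal cross edges. The final assertion follows in the same way: a tetrahedron with nontrivial symmetry satisfies the conclusion of Conjecture~\ref{conj:four}, so any counterexample must have trivial symmetry group.

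The only delicate step is the reduction to prime order, together with checking that each case really matches the hypotheses of the cited theorems. In the transposition case, the point to verify is that an isometry fixing three non-collinear points and distinct from the identity is the reflection in their plane. The substantive work is already contained in Theorems~\ref{thm:twopairs}, \ref{thm:reflection} and~\ref{sym:thm:classification}.
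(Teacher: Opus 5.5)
Your proof is correct and follows the paper's argument: you do the same case analysis on the induced permutation, cite Theorems~\ref{thm:reflection}, \ref{thm:twopairs} and~\ref{sym:thm:classification} in the same way, and reduce the $3$-cycle case to a reflection as the paper does. The only difference is the $4$-cycle: you pass to its square, a double transposition handled by Theorem~\ref{thm:twopairs}, whereas the paper notes directly that a $4$-cycle makes all three opposite pairs equal and invokes Theorem~\ref{thm:disphenoid}; both routes work.
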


\begin{proof}
The isometry induces a nonidentity permutation $\pi$ of the vertices preserving all six edge lengths. Conversely, every permutation of the vertices preserving the six edge lengths is induced by an isometry, since the edge lengths determine the tetrahedron up to congruence. After relabelling, $\pi$ is one of $(1\,2)$, $(1\,2)(3\,4)$, $(1\,2\,3)$ or $(1\,2\,3\,4)$.

If $\pi=(1\,2)$, the isometry fixes $A_3$, $A_4$ and the midpoint of $A_1A_2$. These three points are not collinear, so the isometry is the reflection in the plane through them. Theorem~\ref{thm:reflection} gives four equal cross edges.

If $\pi=(1\,2)(3\,4)$, then $A_1A_3=A_2A_4$ and $A_1A_4=A_2A_3$. Theorem~\ref{thm:twopairs} applies.

If $\pi=(1\,2\,3)$, then $A_1A_2=A_2A_3=A_3A_1$ and $A_1A_4=A_2A_4=A_3A_4$. The transposition $(1\,2)$ then preserves all edge lengths. It is induced by an isometry, which is a reflection by the first case.

If $\pi=(1\,2\,3\,4)$, then $A_1A_2=A_2A_3=A_3A_4=A_4A_1$ and $A_1A_3=A_2A_4$. All three pairs of opposite edges are equal, and Theorem~\ref{thm:disphenoid} applies.

In every case the tetrahedron has four equal cross edges, and Theorem~\ref{sym:thm:classification} completes the proof.
\end{proof}

\subsection{The arithmetic of the three classes}

The three solutions are quite different arithmetically. For the regular tetrahedron all $q_{ij}$ are equal to the root of $3q^3-9q-2=0$ in $(1,2)$, and $T_2$ is described by square roots (Theorem~\ref{thm:T2}). For $T_1$ the situation is different.

\begin{proposition}\label{prop:T1galois}
The polynomial $P$ in \eqref{sym:eq:P} is irreducible over $\mathbb Q$ and its Galois group is the symmetric group $S_6$. Consequently the number $v$ specifying $T_1$ is not expressible by radicals, and no tetrahedron similar to $T_1$ has vertex coordinates expressible by radicals.
\end{proposition}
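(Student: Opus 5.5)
The plan has two parts: first show that $P$ has Galois group $S_6$ by reducing modulo a few good primes, then transfer non-solvability from $v$ to the coordinates of any tetrahedron similar to $T_1$.

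\emph{Irreducibility and the Galois group.} For irreducibility, factor $P$ modulo a small prime not dividing its discriminant; I would try $p=5,7,11,13,\dots$ in turn. If $P$ is irreducible modulo some such prime, then it is irreducible over $\mathbb Q$, and the Galois group contains a $6$-cycle. To reach $S_6$, I would use the standard criterion: a transitive subgroup of $S_6$ containing a transposition and a $5$-cycle is all of $S_6$. Here $6$ is not prime, so a transposition and a $p$-cycle alone do not suffice; I would combine them with transitivity, since a transitive group with a $5$-cycle is $2$-transitive, hence primitive, and a primitive group with a transposition is symmetric. So I would search for a prime $\ell\nmid\operatorname{disc}(P)$ where $P$ factors as (degree $5$)(degree $1$), which gives a $5$-cycle by Dedekind's theorem, and a prime where it factors as (degree $2$)(degree $1$)$^4$, or (degree $2$) times irreducible factors of odd degree, which gives a transposition after taking a suitable odd power of Frobenius. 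As a cross-check I would verify that $\operatorname{disc}(P)$ is not a square, which excludes containment in $A_6$. All of these steps are finite computations over $\mathbb F_\ell$ with exact integers and would go into a script like \texttt{check\_T1.py}. The main practical obstacle is finding primes with the right cycle types. Chebotarev density makes such primes plentiful if the group really is $S_6$, so I expect a short search up to a few hundred to succeed. Since $S_6$ is not solvable, $v$ is not expressible by radicals.

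\emph{Transfer to coordinates.} Suppose some tetrahedron similar to $T_1$ had vertex coordinates in a radical extension $R$ of $\mathbb Q$. Then $R$ contains its squared edge lengths, and hence the ratio $\mu_*^2/\lambda_*^2$, which is invariant under similarity. Dihedral angles are similarity invariant, so the cosines $h_p=\ph(p)$ and $h_s=\ph(s)$ are rational functions of the squared edges through the Gram formulas for face normals, and therefore lie in $R$. The harder direction is getting $v$ itself into a radical extension, since $p$ is a root of the cubic $\ph(X)=h_p$. Adjoining a root of a cubic is a radical extension by Cardano's formula, so $p$, $s$, and likewise $q$ (from $\ph(q)=h_q$) lie in a radical extension $R'\supseteq R$. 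Then $x$ and $y$ from \eqref{sym:eq:xy} are square roots of elements of $R'$, and $v=xy$ lies in a radical extension. This contradicts the previous paragraph. There is one subtlety to handle: the real root relevant to $T_1$ must be among the conjugates produced. This causes no problem, because the argument only needs that the specific number $v$ lies in some radical extension of $\mathbb Q$, and the minimal polynomial of $v$ is $P$, which is irreducible.
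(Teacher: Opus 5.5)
Your proposal is correct and follows the paper's proof essentially step for step. The paper also uses Dedekind's theorem: $P$ is irreducible modulo $23$ (giving a $6$-cycle), factors as quintic times linear modulo $7$ (a $5$-cycle fixing a point, hence $2$-transitivity and primitivity), and factors as cubic times quadratic times linear modulo $31$ (whose cube is a transposition), which yields $G=S_6$ exactly as in your argument. The transfer to coordinates is likewise the same: it passes through the unit normals, $\cos\theta_{ij}=-\ph(q_{ij})$, the cubic $x^3-3x-2\cos\theta_{ij}$, and $v=xy$. One small point: $h_p,h_s$ are not rational in the squared edges alone, since the face areas enter as square roots. This is harmless, because adjoining those square roots keeps you inside a radical extension.
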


\begin{proof}
We use Dedekind's theorem: if a prime $\ell$ divides neither the leading coefficient nor the discriminant of $P$, the degrees of the irreducible factors of $P$ modulo $\ell$ form the cycle type of an element of the Galois group. Modulo $23$, $P$ is irreducible. Modulo $7$ it factors as a quintic times a linear factor, and modulo $31$ as a cubic, a quadratic and a linear factor. The first fact shows that $P$ is irreducible over $\mathbb Q$ and that the Galois group $G$, as a transitive group of degree six, contains a $6$-cycle. The second gives a $5$-cycle fixing a point, so the point stabilizer is transitive on the remaining points; hence $G$ is $2$-transitive and primitive. The cube of an element of type $(3,2,1)$ is a transposition. A primitive permutation group containing a transposition is the full symmetric group. Thus $G=S_6$, which is not solvable, and $v$ is not expressible by radicals.

Suppose a tetrahedron similar to $T_1$ had coordinates expressible by radicals. Its unit normals, and hence $\cos\theta_{ij}=-\ph(q_{ij})$, would be expressible by radicals. Each $q_{ij}$ is a root of $x^3-3x-2\cos\theta_{ij}$, so the numbers $p,s$ of Theorem~\ref{sym:thm:classification} would be expressible by radicals, and so would $x,y$ in \eqref{sym:eq:xy} and $v=xy$, a contradiction.
\end{proof}

The factorizations modulo $7$, $23$ and $31$ are checked in \texttt{check\_galois\_T1.py}. So, unlike $T_2$, the tetrahedron $T_1$ has no expression in radicals, and the description in Theorem~\ref{sym:thm:classification} is the closed form one can hope for.

\section{The case with one pair of equal opposite edges}\label{sec:onepair}

\begin{proposition}
\label{prop:cotangent}
If $AB=CD=\ell$, then
\begin{equation}
24V\ell(\cot\theta_{CD}-\cot\theta_{AB})
=(AC^2-BD^2)(AD^2-BC^2),
\label{eq:cotangent}
\end{equation}
where $\theta_{XY}$ denotes the interior dihedral angle along $XY$. In particular,
\[
\theta_{AB}=\theta_{CD}
\;\Longleftrightarrow\; AC=BD\ \text{or}\ AD=BC.
\]
\end{proposition}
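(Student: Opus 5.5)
The plan is to reduce \eqref{eq:cotangent} to an identity among the six squared edge lengths. Fix an edge $XY$ and let $Z,W$ be the other two vertices. Put $u=Y-X$, $v=Z-X$, $w=W-X$. The vectors $u\times v$ and $u\times w$ are normal to the two faces through $XY$. Their scalar product, normalized by their lengths, is the cosine of the interior dihedral angle along $XY$. This holds because each of them equals $|u|$ times the component of $v$, respectively $w$, orthogonal to $u$, rotated by the same quarter turn about $u$.

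Since $|u\times v|=2S_{XYZ}$ and $|u\times w|=2S_{XYW}$, we get
\[
4S_{XYZ}S_{XYW}\cos\theta_{XY}=(u\times v)\cdot(u\times w)=|u|^2(v\cdot w)-(u\cdot v)(u\cdot w)
\]
by the Lagrange identity. On the other hand, the edge formula \eqref{eq:edge} gives $2S_{XYZ}S_{XYW}\sin\theta_{XY}=3V\,|ZW|$. Dividing the two relations yields
\[
6V\,|ZW|\cot\theta_{XY}=|u|^2(v\cdot w)-(u\cdot v)(u\cdot w).
\]
If $AB=CD=\ell$, then the edge opposite $AB$ and the edge opposite $CD$ both have length $\ell$. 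Hence
\[
24V\ell\,(\cot\theta_{CD}-\cot\theta_{AB})=4\bigl(\Phi_{CD}-\Phi_{AB}\bigr),
\]
where $\Phi_{XY}$ denotes the Gram expression on the right of the previous display.

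Next I will express every dot product through squared edges by the polarization rule $2(v\cdot w)=|XZ|^2+|XW|^2-|ZW|^2$. For $XY=AB$ (with $Z=C$, $W=D$) and for $XY=CD$ (with $Z=A$, $W=B$), $4\Phi_{XY}$ becomes an explicit quadratic form in the six squared lengths. I will then substitute $AB^2=CD^2=\ell^2$ and subtract. All terms involving $\ell^2$ must cancel. What remains should be the product $(AC^2-BD^2)(AD^2-BC^2)$. As a sanity check on signs, both sides vanish under the swaps $A\leftrightarrow B$ together with $C\leftrightarrow D$, and the right-hand side is antisymmetric under $C\leftrightarrow D$ alone, which the left-hand side must reproduce.

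This expansion is the only real work. The main risk is a sign or ordering slip in the polarizations, in particular keeping consistent which endpoint serves as the base vertex $X$. I would verify the final identity by a quick symbolic expansion, in the style of the paper's check scripts.

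For the equivalence, note that $V>0$ and $\ell>0$, and that $\cot$ is injective on $(0,\pi)$. Hence $\theta_{AB}=\theta_{CD}$ holds exactly when the left side of \eqref{eq:cotangent} vanishes. By the identity this happens exactly when $AC=BD$ or $AD=BC$.
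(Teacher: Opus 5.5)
Your argument is correct, and it reaches \eqref{eq:cotangent} by a different route from the paper.

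\emph{Comparison with the paper.} The paper first proves the Heron-type identity $8(S_A^2+S_B^2-S_C^2-S_D^2)=(AC^2-BD^2)(AD^2-BC^2)$. It then uses the closure relation $\sum_i S_in_i=0$ to rewrite the left side as $16S_AS_B\cos\theta_{CD}-16S_CS_D\cos\theta_{AB}$, and only then applies \eqref{eq:edge}.

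You instead compute each product $S\,S\cos\theta$ directly as a Gram expression, using Lagrange's identity. So you need neither Heron's formula nor the equilibrium of the area vectors. Since $4\Phi_{AB}=16S_CS_D\cos\theta_{AB}$ and $4\Phi_{CD}=16S_AS_B\cos\theta_{CD}$, your difference $4(\Phi_{CD}-\Phi_{AB})$ is exactly the paper's $8(S_A^2+S_B^2-S_C^2-S_D^2)$, evaluated another way.

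What each route buys: yours gives, after the correction below, a cotangent formula for each single edge, valid for every tetrahedron. The paper's reuses the area-vector setup of Section~\ref{sec:gram}, and the cancellation among the symmetric expressions $16S_i^2$ is almost immediate.

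The expansion you deferred does close. With $a=\ell^2$, $x=AC^2$, $y=AD^2$, $z=BC^2$, $t=BD^2$ one gets
\[
4\Phi_{AB}=2a(x+y-a)-(a+x-z)(a+y-t),\qquad
4\Phi_{CD}=2a(x+z-a)-(a+x-y)(a+z-t),
\]
and $4(\Phi_{CD}-\Phi_{AB})=(x-t)(y-z)$, as required.

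\emph{Two slips to correct.} First, \eqref{eq:edge} gives $2S_{XYZ}S_{XYW}\sin\theta_{XY}=3V\,|XY|$, with the common edge of the two faces, not $3V\,|ZW|$. In \eqref{eq:edge} the angle $\theta_{ij}$ lies along $A_kA_l$, which is the common edge of $F_i$ and $F_j$. Equivalently, $(u\times v)\times(u\times w)=\det(u,v,w)\,u$ has length $6V|u|$. So your general formula $6V\,|ZW|\cot\theta_{XY}=\Phi_{XY}$ is false in general. It is harmless here only because $AB=CD=\ell$.

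Second, your sign check is wrong. Both sides of \eqref{eq:cotangent} are invariant, not antisymmetric, under $C\leftrightarrow D$ alone. They are also invariant (rather than vanishing) under $A\leftrightarrow B$ together with $C\leftrightarrow D$. The genuine antisymmetry is under exchanging the two edges, $A\leftrightarrow C$ together with $B\leftrightarrow D$, which swaps $\theta_{AB}$ with $\theta_{CD}$ and $AD^2$ with $BC^2$. As you stated it, your check would have flagged the correct result as an error.
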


\begin{proof}
Heron's formula for the four faces, using $AB^2=CD^2$, gives
\[
8(S_A^2+S_B^2-S_C^2-S_D^2)
=(AC^2-BD^2)(AD^2-BC^2).
\]
For a direct check, put $a=\ell^2$, $x=AC^2$, $y=AD^2$, $z=BC^2$, $t=BD^2$. In the corresponding sum of the four expressions for $16S_i^2$, only $2(zt+xy-yt-xz)=2(x-t)(y-z)$ remains.

The equilibrium of the area normals gives
\[
S_A^2+S_B^2-S_C^2-S_D^2
=2S_AS_B\cos\theta_{CD}-2S_CS_D\cos\theta_{AB}.
\]
By \eqref{eq:edge}, both $S_AS_B\sin\theta_{CD}$ and $S_CS_D\sin\theta_{AB}$ equal $3V\ell/2$. Substitution proves \eqref{eq:cotangent}. The last assertion follows because cotangent is injective on $(0,\pi)$.
\end{proof}

Together with Theorem~\ref{thm:twopairs}, the proposition shows that Conjecture~\ref{conj:onepair} would follow from
\begin{equation}
\M(ABCD)\text{ regular},\; AB=CD
\;\Longrightarrow\; \theta_{AB}=\theta_{CD}.
\label{eq:missing}
\end{equation}
Conversely, suppose that $AB=CD$ and that the tetrahedron has four equal cross edges with respect to some pair of opposite edges. Then it has a second pair of equal opposite edges, and Proposition~\ref{prop:cotangent} gives $\theta_{AB}=\theta_{CD}$. Hence, for tetrahedra with $AB=CD$, Conjecture~\ref{conj:onepair} is equivalent to \eqref{eq:missing}; the implication itself is not proved here. With $A,B,C,D=A_1,A_2,A_3,A_4$, it asks for $q_{12}=q_{34}$.

\section{Near the regular tetrahedron}\label{sec:nearregular}

\begin{theorem}
\label{thm:localregular}
There is a neighborhood of the regular shape in the space of tetrahedra modulo similarities in which a regular Morley tetrahedron implies that the original tetrahedron is regular.
\end{theorem}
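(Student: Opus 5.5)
The plan is to apply the implicit function theorem to the regularity system \eqref{eq:tenequations} at the regular center, and to use the full symmetry group $S_4$ to reduce the nondegeneracy check to a few scalar computations. The unknowns are $x=(q_{ij},d_i)\in\R^{10}$ and the system $F(x)=0$ consists of ten equations. Any tetrahedron with regular Morley tetrahedron produces a zero of $F$ by Proposition~\ref{prop:regularity}, and the map from tetrahedron shapes to $(q_{ij})$ is continuous. So is the map to $d$, since $d_i=\sqrt{B_{ii}/3}$ with $B=K^{-1}HK^{-1}$ and $K$ invertible by Lemma~\ref{lem:negative}. Hence it suffices to show that the regular zero $x_0$ is an isolated zero of $F$. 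The certified computation of Section~\ref{sec:intervals} already gives a unique zero in the box of radius $10^{-5}$ around the regular center, since $J_0$ is nonsingular there. The conclusion then follows, because shapes near the regular one give data inside that box, so the only zero there is $x_0$ itself.

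To give a conceptual proof independent of the interval certificate, I would show directly that $J_0=DF(x_0)$ is nonsingular. At $x_0$ all $q_{ij}=q_0$, the root of $3q^3-9q-2=0$ in $(1,2)$, and all $d_i=\tau_0$. The Jacobian is $S_4$-equivariant, where $S_4$ acts on the six edge variables, the four vertex variables, and the ten equations indexed by unordered pairs $\{i,j\}$ with $i\le j$. I would decompose the spaces into isotypic components. The unknowns split as $\R^6=\mathbf 1\oplus V_3\oplus V_2$ and $\R^4=\mathbf 1\oplus V_3$. The equations split as the four diagonal entries $F_{ii}$, giving $\mathbf 1\oplus V_3$, plus the six off-diagonal entries, giving $\mathbf 1\oplus V_3\oplus V_2$. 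By Schur's lemma $J_0$ becomes a $2\times2$ block on the trivial part, a $2\times2$ block on $V_3$, and a $1\times1$ block on $V_2$. Nonsingularity reduces to three determinants, each computed from \eqref{eq:tenjacobian} by plugging in symmetric test vectors: the constant vectors, vectors such as $e_1-e_2$ together with the edge vector $E^{13}+E^{14}-E^{23}-E^{24}$, and the $V_2$ vector $E^{12}+E^{34}-E^{13}-E^{24}$. These determinants are algebraic numbers in $\mathbb Q(q_0)$, so their nonvanishing can be verified exactly by reducing modulo the cubic.

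I expect the trivial block to cause trouble. Scaling is already quotiented out through the normalization by $L$, so the trivial block should be nonsingular. But one must check that there is no hidden one-parameter family, which would correspond to a zero determinant. If that block turns out singular, the fallback is to note that the trivial-isotypic directions preserve full symmetry, and a fully symmetric $K$ forces all $q_{ij}$ equal with $\ph(q)=-1/3$, which already pins the point. The $V_2$ block is the other point of concern, since along the disphenoid direction the classification of Theorem~\ref{t2:thm:classification} shows that the regular point is a simple root of the resultant factor $3q^3-9q-2$. That simplicity gives a consistency check that the corresponding determinant is nonzero.
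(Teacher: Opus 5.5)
Your argument is correct, but it is organized differently from the paper's proof. The paper does not use the ten-variable system for this theorem. It parametrizes shapes directly by a symmetric $G$ with $\tr G=3$, writes the normalized Morley shape as an analytic map $\Phi(G)$, and shows $D\Phi_{I_3}=\kappa_2E_{\mathrm{diag}}+\kappa_3E_{\mathrm{off}}$ with $\kappa_2>0>\kappa_3$. Those two summands are exactly your $V_2$ and $V_3$ components. There is no trivial block and no $d$-component, because scale and the face distances are eliminated in advance, so only two scalars need checking. No continuity step is needed either, since the unknown is the shape itself. The price is differentiating $G^{1/2}$, the normals, $K$ and the barycentric matrix $P(G)$.

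Your route needs the continuity step from shapes to $(q,d)$ via $d_i=\sqrt{B_{ii}/3}$, which is correct. After that, every entry of $DF(x_0)$ is a polynomial read off from \eqref{eq:tenjacobian}, and the regular point is handled exactly like $T_1$ and $T_2$. Your first route is also valid: the regular center is one of the three boxes of Section~\ref{sec:intervals}, so the theorem already follows from that certificate. However, uniqueness in the box comes from the contraction bound \eqref{eq:contractionbounds} on all of $\mathcal I$, not merely from nonsingularity of $J_0$ at the center. What the paper's proof, and your second route, buy is independence from the computer.

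The block computation goes through. Use $3\tau_0^2(q_0-1)^2=1$, equivalently $8\tau_0^2=q_0+2$.
\begin{itemize}
\item Trivial block: restrict to the fixed plane $q_{ij}\equiv q$, $d_i\equiv\tau$ and take the Jacobian of $(q,\tau)\mapsto(F_{11},F_{12})$. Its determinant is $6\tau_0(q_0-1)^2\varphi'(q_0)\neq0$.
\item $V_3$ block: write $\epsilon_{ij}=f_i+f_j$ and $\delta d=\eta$ for the unknowns, and $\delta F_{ii}=h_i$, $\delta F_{ij}=g_i+g_j$ for the equations. The $2\times2$ block then has determinant $-\tau_0(q_0-1)^3(q_0+1)\neq0$.
\item $V_2$ block: $J_0$ acts by $8\tau_0^2(1-q_0)-\varphi'(q_0)=(q_0-1)^2/2\neq0$.
\end{itemize}
So your fallback is unnecessary. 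As stated it would not suffice in any case. Isolation among $S_4$-fixed zeros implies isolation among all zeros only after an equivariant Lyapunov--Schmidt step, showing that nearby zeros are fixed when the kernel lies in the fixed space. Likewise, the simplicity of the root of $3q^3-9q-2$ in the disphenoid resultant is only a heuristic, not a proof that the $V_2$ scalar is nonzero.
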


\begin{proof}
Let
\[
R=\begin{pmatrix}1&1&1\\1&-1&-1\\-1&1&-1\\-1&-1&1\end{pmatrix},
\]
with rows $r_i$. Parametrize centered tetrahedra by the rows of $X=RG^{1/2}$, where $G$ is symmetric positive definite and $\tr G=3$. The regular shape is $G=I_3$. Put
\[
\mu_i=(r_iG^{-1}r_i^T)^{1/2},\;
H_{ij}=\frac{r_iG^{-1}r_j^T}{\mu_i\mu_j},
\]
and obtain $K$ from $H$ as in Section~\ref{sec:gram}. The $\mu_i$ are proportional to the face areas. The barycentric coordinate matrix of the Morley vertices is
\[
P(G)=\diag\bigl((K\mu)_i^{-1}\bigr)K\diag(\mu_i).
\]
Consequently, if $M=P(G)X$ and $Y=R^TM/4$, the normalized shape of the Morley tetrahedron is
\begin{equation}
\Phi(G)=\frac{3YY^T}{\tr(YY^T)}.
\label{eq:shapemap}
\end{equation}
The map is real analytic near $I_3$, and the Morley tetrahedron is regular exactly when $\Phi(G)=I_3$.

Let
\[
q=2\cos\left(\frac13\arccos\frac13\right),\;
3q^3-9q-2=0,
\]
and put $\lambda=1+3q$, $h=(1-q)/\lambda$. At $G=I_3$ we have
\[
H_0=(4I_4-\one\one^T)/3,\;
K_0=(1-q)I_4+q\one\one^T,\; Y_0=hI_3.
\]
Let $E$ be a symmetric matrix with trace zero. Write it as the sum of a diagonal matrix and a matrix whose diagonal entries are zero. Differentiation gives
\begin{equation}
D\Phi_{I_3}(E)=\kappa_2 E_{\mathrm{diag}}+\kappa_3 E_{\mathrm{off}},
\label{eq:differential}
\end{equation}
where
\begin{equation}
\kappa_2=\frac{8-9(q-1)^2}{9(q-1)^2(q+1)},
\;
\kappa_3=-\frac{8-9(q-1)^2}{27(q-1)^2(1+3q)}.
\label{eq:kappas}
\end{equation}

For completeness, the intermediate differentials are as follows. Set $a_i=r_iEr_i^T$, $v_i=-a_i/6$, and $\eta=-2/[3(q^2-1)]$. Then
\[
\delta\mu_i/\sqrt3=v_i,\;
\delta H_{ij}=-\frac{r_iEr_j^T}{3}+(H_0)_{ij}\frac{a_i+a_j}{6},
\]
and $\delta K_{ij}=\eta\delta H_{ij}$ off the diagonal, with zero diagonal. Also,
\[
\delta P=\frac{\delta K+K_0\diag v}{\lambda}
-\frac{\diag(\delta K\one+K_0v)K_0}{\lambda^2},
\;
\delta Y=\frac14R^T\delta P R+\frac h2E.
\]
Substitution into \eqref{eq:shapemap}, evaluated on $\diag(1,-1,0)$ and on the symmetric matrix with entries $E_{23}=E_{32}=1$ and all others zero, gives respectively
\[
1-\frac{16}{9(q-1)^2(q+1)},\;
\frac{q+3}{3(1+3q)}-\frac{32}{27(q-1)^2(1+3q)}.
\]
Using $3q^3-9q-2=0$ reduces these to \eqref{eq:kappas}; coordinate permutations give the remaining basis directions and establish \eqref{eq:differential}.

The cubic is strictly increasing on $(1,2)$ and changes sign between $9/5$ and $19/10$. Hence $8-9(q-1)^2>0$, so $\kappa_2>0$ and $\kappa_3<0$. The two tangent subspaces have dimensions $2$ and $3$, respectively, and the differential is invertible on their direct sum. The inverse function theorem now implies that $\Phi(G)=I_3$ has only the solution $G=I_3$ in a neighborhood of $I_3$.
\end{proof}

\section{Summary of the proved cases}
\label{sec:summary}

The table lists what is known about a tetrahedron $ABCD$ with a regular Morley tetrahedron under various additional hypotheses.

\begin{center}\small
\begin{tabular}{p{.43\textwidth}p{.47\textwidth}}\toprule
Additional hypothesis & Conclusion\\\midrule
$ABCD$ isosceles & regular or $T_2$ (Theorems~\ref{thm:disphenoid}, \ref{t2:thm:classification})\\
$t_1=t_2=t_3=t_4$ & isosceles, hence regular or $T_2$ (Theorem~\ref{thm:equaldistances})\\
two pairs of equal opposite edges & four equal cross edges (Theorem~\ref{thm:twopairs})\\
one reflection symmetry & a second reflection (Theorem~\ref{thm:reflection})\\
four equal cross edges & regular, $T_1$ or $T_2$ (Theorem~\ref{sym:thm:classification})\\
any nontrivial symmetry & regular, $T_1$ or $T_2$ (Corollary~\ref{cor:symmetric})\\
shape close to the regular one & regular (Theorem~\ref{thm:localregular})\\
shape close to $T_1$ or $T_2$ & equal to it (Section~\ref{sec:intervals})\\
$AB=CD$ only & open; equivalent to $\theta_{AB}=\theta_{CD}$ (Proposition~\ref{prop:cotangent})\\
none & open (Conjecture~\ref{conj:four})\\\bottomrule
\end{tabular}
\end{center}

What remains open is the case of a tetrahedron with trivial symmetry group that is not close to one of the three known solutions.

\section{Possible approaches}
\label{tet:directions}

By Corollaries~\ref{sym:cor:equivalence} and~\ref{cor:symmetric}, everything comes down to one step: to show that a regular Morley tetrahedron forces some symmetry of the original tetrahedron. We describe several ways one might attempt this.

\subsection{Forcing a first reflection}

By Proposition~\ref{prop:defect}, a reflection symmetry corresponds to equality $\varepsilon_{ij}=0$ in a nonnegative quantity. One could try to combine the six defects with the positive kernel condition for $H$ and the identity \eqref{eq:regularity}, and to show that at least one defect vanishes; Theorem~\ref{thm:reflection} would then give the second reflection. It would suffice to find an identity or inequality that rules out $\varepsilon_{ij}>0$ for all six pairs simultaneously. The bounds of Proposition~\ref{prop:distancebounds} restrict the variables, but they do not come close to this.

\subsection{One pair of equal opposite edges}

The case $AB=CD$ is a natural test. Here one has to prove \eqref{eq:missing}, that is, $q_{12}=q_{34}$; Theorem~\ref{thm:twopairs} then finishes the argument. One can try to combine the edge condition \eqref{eq:edge} with some of the entries of \eqref{eq:regularity}. Any such elimination has to keep track of the positivity of the face areas, since polynomial relations alone have roots that do not come from tetrahedra.

\subsection{The case of acute dihedral angles}

The exceptional examples both have obtuse dihedral angles: $T_1$ has one and $T_2$ has two. It is conceivable that a tetrahedron with six acute dihedral angles and a regular Morley tetrahedron must be regular. In the matrix formulation acuteness means $\ph(q_{ij})<0$ for all $i\ne j$, and this sign condition might allow estimates that are not available in general.

\subsection{A global computation}

The system $F=0$ of \eqref{eq:tenequations} consists of ten polynomial equations of degree at most four in the ten unknowns $q_{ij}$ and $d_i$. It is invariant under the action of $S_4$ and under $d\mapsto-d$, and its admissible solutions lie in a bounded region: $1<q_{ij}<2$, $0<d_i\le1/\sqrt2$ by Proposition~\ref{prop:defect}, and $\max_id_i/\min_id_i<2/\sqrt3$ by Proposition~\ref{prop:distancebounds}. This suggests two computational approaches.

First, one can cover the closure of this region by boxes and exclude zeros by rational interval arithmetic, as in Section~\ref{sec:intervals}, except in small neighbourhoods of the three known solutions. The difficulty is the boundary. When some $q_{ij}$ tends to $1$ or $2$ the tetrahedron degenerates, and one needs to know which variables can approach their limits and whether the equations can hold along such a sequence. This requires estimates that remain valid near degeneration.

Second, homotopy continuation or monodromy methods could be used to compute all isolated complex solutions of $F=0$, keeping the real solutions that satisfy the conditions of Proposition~\ref{prop:regularity}. The B\'ezout bound $4^{10}$ is large, but the symmetries help, and by Corollary~\ref{cor:symmetric} any new solution would appear in $24$ labelled copies. Such a computation would give strong evidence; a proof would require certified path tracking or a separate argument for completeness.

\subsection{The sector construction}

For the generalized sector construction our methods do not apply directly. The trisection case relies on the cubic relation $\ph(q)=(3q-q^3)/2$ between $K$ and $H$. For other sector parameters one would first have to find the corresponding distance ratios and see which parts of the argument survive. We have not studied Conjecture~\ref{conj:psector} beyond the trisection case.

\section{Open problems}
\label{sec:openproblems}

We end with a list of questions.
\begin{enumerate}[label=\arabic*.]
\item Prove or disprove Conjecture~\ref{conj:four}, or one of its equivalent forms: the classification Conjecture~\ref{conj:classification}, the symmetry Conjecture~\ref{conj:symmetry}, or the matrix form Conjecture~\ref{conj:positivity}.
\item Prove Conjecture~\ref{conj:onepair}, that is, the implication \eqref{eq:missing}.
\item Determine all tetrahedra whose Morley tetrahedron is isosceles. By Theorem~\ref{thm:curve} they include a curve of nonisosceles tetrahedra through $T_1$. Does every such tetrahedron have a symmetry?
\item Must a tetrahedron with six acute dihedral angles and regular Morley tetrahedron be regular?
\item Find the analogue of the Gram formulation for the sector construction of \cite[Theorem 6]{TranActa}, and decide Conjecture~\ref{conj:psector}.
\item Is there a geometric reason why $T_2$ is expressible by square roots while $T_1$ is defined by a sextic with Galois group $S_6$?
\item Study the same problems for simplices of higher dimension.
\end{enumerate}

\begin{appendices}

\section{A polynomial inequality for two opposite pairs}
\label{app:bernstein}

This appendix proves the incompatibility used in Theorem~\ref{thm:twopairs}. Set
\[
x=P,\; t=R/P,\;
z=\frac{wP(P+3)}{R(R+3)}.
\]
By \eqref{eq:wbound}, all three variables lie in $(0,1)$. Write
\[
u=x(1+t),\; v=tx^2,\; D_0=x+3,
\; W=zt(tx+3),\; w=W/D_0.
\]
Define the polynomials
\begin{align*}
N={}&2u(2u+3)D_0^2-6uWD_0-v(W^2+4WD_0+11D_0^2),\\
C_0={}&v(u+5)D_0+v(u+1)W+2Wu(u+3),\\
E={}&\frac{NC_0-6(u+1)(v+2u+4)v(D_0-W)(D_0+W)^2}{x^2},\\
B_0={}&(u^2+3u-6)D_0^2-6WD_0+2v(W^2+WD_0-D_0^2).
\end{align*}
The numerator defining $E$ is divisible by $x^2$. Equation \eqref{eq:Ezero} becomes $E=0$, while \eqref{eq:Bpositive} becomes $B_0>0$.

\begin{lemma}
\label{lem:bernstein}
On $(0,1)^3$, $B_0\ge0$ implies $E<0$.
\end{lemma}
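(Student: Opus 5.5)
The plan is a computer-assisted certificate in exact rational arithmetic, using Bernstein expansions on subdivided boxes, in the spirit of Section~\ref{sec:intervals}. Both $E$ and $B_0$ are explicit polynomials in $(x,t,z)$ with rational coefficients; the paper records that the division by $x^2$ in the definition of $E$ is exact. First I would expand $E$ and $B_0$ exactly and record their degrees in each variable, so that tensor-product Bernstein bases on $[0,1]^3$ are available. The statement to certify is that no point of $(0,1)^3$ satisfies both $B_0\ge0$ and $E\ge0$.

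I would organise this as a recursive subdivision of $[0,1]^3$ into dyadic boxes. On each box $\beta$, I compute the Bernstein coefficients of $E$ and $B_0$ after affine reparametrisation to $[0,1]^3$. The box is discharged in one of two ways:
\begin{enumerate}
\item all Bernstein coefficients of $B_0$ are negative, so $B_0<0$ on $\beta$ and the hypothesis fails there;
\item all Bernstein coefficients of $E$ are negative, so $E<0$ on $\beta$.
\end{enumerate}
A box meeting neither test is bisected along its longest edge. Where the zero sets of $B_0$ and $E$ come close, neither test alone suffices. For such boxes I would use a Positivstellensatz-type certificate instead: find a rational $\lambda\ge0$ with $E+\lambda B_0<0$ on the box, checked again by Bernstein coefficients. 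This suffices, since $B_0\ge0$ then forces $E<-\lambda B_0\le0$.

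The main obstacle is the boundary of the cube, where strict inequalities cannot be certified by negativity of coefficients. Before any subdivision, I would compute the restrictions of $E$ and $B_0$ to the faces $x=0$, $t=0$, $z=0$ and $x=1$, $t=1$, $z=1$, together with the relevant lower edges. The face $z=0$ is $w=0$, the face $t=1$ is $R=P$, and the face $x=0$ is the degenerate limit. From these restrictions I would read off the sign and order of vanishing. If $E$ vanishes on a face $F$, I would factor out the corresponding monomial and certify the strict inequality $E<0$ for the cofactor, which keeps the same sign as $E$ on the open cube. A concrete case is the face $z=0$, where $W=0$. There $N=2u(2u+3)D_0^2-11vD_0^2$ and $C_0=v(u+5)D_0$, so I would check that the combination with the last term has a definite sign. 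The face $x=0$ is the delicate one, because after dividing by $x^2$ a nonzero limit survives, and I would check that this limit is negative where $B_0\ge0$.

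The remaining task is to arrange that all the closed cofactor boxes pass the tests with strict inequalities. Once that holds, a finite list of rational comparisons proves the lemma, and I would implement it in a script analogous to \texttt{check\_T1.py}.
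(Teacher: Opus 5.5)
Your subdivision cannot terminate as designed, and the boundary device you propose does not fix it. The degenerate point is not on the face $x=0$: there $B_0=-54(1+zt)<0$, so your first test disposes of it. It is the corner $(x,t,z)=(1,1,1)$, i.e.\ $P=R=1$ with $w$ at its upper bound, where $E$ and $B_0$ vanish together. Directly from the definitions:
\begin{itemize}
\item on the edge $x=t=1$ one gets $B_0=32(1-z)^2$ and $E=-64(1-z)(139z^2-74z+43)$;
\item on the edge $t=z=1$ one has $W=D_0$, hence $N=0$ and $D_0-W=0$, so $E\equiv0$ there, while $B_0=6(x-1)(x+2)(x+3)^2$.
\end{itemize}
So on every closed box containing $(1,1,1)$ the vertex Bernstein coefficient of $E$, of $B_0$, and of every $E+\lambda B_0$ is exactly $0$. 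None of your three strict tests can ever pass there, however far you subdivide. No monomial can be divided out either, since neither $E$ nor $B_0$ vanishes identically on any of the faces $x=1$, $t=1$, $z=1$ (e.g.\ $E(0,t,1)=324\,t(1-t)(1+t)^2$). Your closing claim, that every closed box can be made to pass a strict test, is therefore false.

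Near this corner the fix is to drop strictness. Put $\alpha=1-x$, $\beta=1-t$, $\gamma=1-z$. The linear part of $E+\lambda B_0$ at $(1,1,1)$ is $-288\lambda\alpha-(8640+144\lambda)\beta-6912\gamma$. This is negative on the closed orthant minus the origin for every $\lambda>0$, but not for $\lambda=0$, because $E\equiv0$ along $t=z=1$. Hence on a small corner box every Bernstein coefficient except the vertex one is negative. That gives $E+\lambda B_0<0$ off the corner, and the corner is not in the open cube.

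The paper avoids subdivision altogether and builds this tolerance in. It exhibits one multiplier $Q$ of degree $(2,4,1)$ with nonnegative Bernstein coefficients. It then checks that $R_0=-E-B_0Q$ has all $280$ coefficients of degree $(6,9,3)$ nonnegative, with $252$ of them positive. Each basis function is positive on the open cube, so $R_0>0$ there, and $-E=B_0Q+R_0>0$ whenever $B_0\ge0$. The $28$ zero coefficients, among them $R_0(1,1,1)=0$, are how such boundary degeneracies are absorbed. Your piecewise-constant $\lambda$ is a local stand-in for $Q$, and the plan is repairable along these lines, but as written it does not go through.
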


\begin{proof}
Let $b_{n,i}(X)=\binom niX^i(1-X)^{n-i}$, and define
\[
Q(x,t,z)=\sum_{i=0}^2\sum_{j=0}^4\sum_{k=0}^1
q_{ijk}b_{2,i}(x)b_{4,j}(t)b_{1,k}(z).
\]
The nonzero coefficients are the following positive rational numbers; all other coefficients are zero.

\begin{center}
\begin{tabular}{cc}
\toprule
$(i,j,k)$ & $q_{ijk}$\\
\midrule
$(0,1,1)$ & $30902484965806161/7009635898549648$\\
$(0,2,1)$ & $16399650927464661/7009635898549648$\\
$(0,3,1)$ & $129949208779311819/14019271797099296$\\
$(0,4,0)$ & $48/7$\\
$(0,4,1)$ & $24/7$\\
$(1,1,1)$ & $2483086725481191/7009635898549648$\\
$(1,3,1)$ & $382341243899593387/56077087188397184$\\
$(1,4,0)$ & $869/56$\\
$(2,0,1)$ & $386384/44499$\\
$(2,1,1)$ & $512549/44499$\\
$(2,2,1)$ & $2202979/38142$\\
$(2,4,0)$ & $73/6$\\
\bottomrule
\end{tabular}
\end{center}

Put $R_0=-E-B_0Q$. In the tensor Bernstein basis of degree $(6,9,3)$, all $280$ coefficients of $R_0$ are nonnegative, and $252$ are strictly positive. These signs follow by substituting the displayed coefficients into the following formula. Namely, if $R_0=\sum_\nu a_\nu X^\nu$, the coefficient at multi-index $\iota$ is
\[
r_\iota=\sum_{\nu\le\iota}a_\nu
\prod_{j=1}^3\frac{\binom{\iota_j}{\nu_j}}{\binom{n_j}{\nu_j}},
\; n=(6,9,3).
\]
Thus each sign follows from a finite sum of the rational numbers given above.

Every Bernstein basis function is positive in the open cube. Hence $R_0>0$ there, while $Q\ge0$. The identity $-E=B_0Q+R_0$ proves the lemma and completes the polynomial step of Theorem~\ref{thm:twopairs}.
\end{proof}

\section{Polynomial elimination for one reflection}
\label{app:reflection}

We use the notation from the proof of Theorem~\ref{thm:reflection}. Put
\[
e_1=(\sqrt2,0,1),\; e_2=(-\sqrt2,0,1),\;
e_3=(0,\sqrt2,-1),\; e_4=(0,-\sqrt2,-1).
\]
Choose $M_i=Le_i/(2\sqrt2)$. The normals satisfy $n_i=\sum_jq_{ij}d_je_j$, giving
\begin{align*}
n_1&=(\sqrt2a(1-p),\sqrt2(bq-cr),a(1+p)-bq-cr),\\
n_3&=(0,\sqrt2(b-cs),2aq-b-cs),\\
n_4&=(0,\sqrt2(bs-c),2ar-bs-c).
\end{align*}
The difference $n_1-n_2$ gives $a^2=(p+2)/8$.

Assume $b\ne c$, and relabel so that $b>c$. Define
\[
R=\frac{b+c}{a},\; U=\frac{b-c}{a},\; X=R^2,\; Y=U^2.
\]
Thus $X>Y>0$. The vectors $n_3+n_4$ and $n_3-n_4$ lie in the $yz$ plane, are perpendicular, and have squared lengths $(s+1)^2(2-s)$ and $(s-1)^2(s+2)$. Comparison of their coordinates yields a signed parameter $T$ such that
\[
T^2=\frac{2-s}{2(s+2)},\;
s=\frac{2(1-2T^2)}{1+2T^2},\; 0<|T|<1/\sqrt6,
\]
and
\[
q+r=\alpha R,\; q-r=\beta U,\;
\alpha=\frac{(s+1)(1+2T)}2,\;
\beta=\frac{(s-1)(T^{-1}-1)}2.
\]
The squared lengths also give
\begin{equation}
(p+2)\left(\frac{X}{s+2}+\frac{Y}{2-s}\right)=4.
\label{eq:reflectionlength}
\end{equation}

The sign of $T$ is forced by the positive kernel. Set
\[
D_* = p+1-\frac{\alpha^2X}{s+1}+\frac{\beta^2Y}{s-1}.
\]
Block elimination gives $\det K=(1-p)(1-s^2)D_*$, so $D_*<0$ by Lemma~\ref{lem:negative}. Rescale the positive area vector so that
\[
KS=(1,1,a/b,a/c)^T.
\]
Solving this system and using \eqref{eq:reflectionlength}, its first coordinate is
\[
S_1=S_2=-\frac{8T(s+2)}{(p+2)(X-Y)D_*}.
\]
Since $S_1>0$, we must have $T>0$. The physical parameter interval is therefore $(0,1/\sqrt6)$.

\subsection{The necessary rational system}

Put $\gamma=(s+1)T$ and $\delta=(s-1)/(2T)$. Equation \eqref{eq:reflectionlength} is the case $i=0$ of the following system. The cases $i=1,2$ are obtained from the equations $n_1\cdot(n_3+n_4)=\ph(q)+\ph(r)$ and $n_1\cdot(n_3-n_4)=\ph(q)-\ph(r)$, using $|n_1|=1$:
\begin{equation}
A_iX+B_iY+C_i=0\;(i=0,1,2),
\label{eq:reflectionlinear}
\end{equation}
where
\[
A_0=(p+2)/(s+2),\; B_0=(p+2)/(2-s),\; C_0=-4,
\]
\begin{align*}
A_1&=-(p+2)\gamma\alpha+\alpha^3,\\
B_1&=(p+2)((\alpha+\beta)(s+1)-\gamma\beta)+3\alpha\beta^2,\\
C_1&=2\gamma(p+2)(p+1)-12\alpha,\\
A_2&=(p+2)(-(\alpha+\beta)(s-1)-\delta\alpha)+3\beta\alpha^2,\\
B_2&=-(p+2)\delta\beta+\beta^3,\\
C_2&=2\delta(p+2)(p+1)-12\beta.
\end{align*}
The last equation uses $U\ne0$. The unit length of $n_1$ gives
\begin{equation}
(p+2)\left[2(p-1)^2+\frac{(\alpha+\beta)^2XY}{2}
+\left(p+1-\frac{\alpha X+\beta Y}{2}\right)^2\right]=8.
\label{eq:reflectionunit}
\end{equation}

For $j=1,2$, set
\[
D_j=A_0B_j-A_jB_0,\;
X_j=B_0C_j-B_jC_0,\;
Y_j=A_jC_0-A_0C_j.
\]
Every solution of \eqref{eq:reflectionlinear} satisfies $X_j=D_jX$ and $Y_j=D_jY$, including when $D_j=0$. It therefore satisfies $F=G_1=G_2=0$, where
\[
F=D_1C_2+A_2X_1+B_2Y_1
\]
and
\begin{align*}
G_j={}&(p+2)\bigl[8(p-1)^2D_j^2+2(\alpha+\beta)^2X_jY_j\\
&\hspace{2em}+(2(p+1)D_j-\alpha X_j-\beta Y_j)^2\bigr]-32D_j^2.
\end{align*}
No division by $D_j$ has been made, so this necessary system loses no singular case.

\subsection{The two resultants}

We describe the polynomial calculation explicitly. For a polynomial in $p$ with coefficients in $\mathbb Q(T)$, let $\pp$ mean the following operation: clear the denominators, divide the coefficients by their polynomial greatest common divisor in $\mathbb Q[T]$, and multiply by a rational constant so that the resulting integer coefficients have greatest common divisor $1$. The result is unique up to sign. Put
\[
\widetilde F=\pp(F),\; \widetilde G_j=\pp(G_j)\;(j=1,2).
\]
To divide without introducing new denominators, start with $V=\widetilde G_j$ and, while $\deg_p V\ge3$, replace $V$ by
\[
\operatorname{lc}_p(\widetilde F)V
-\operatorname{lc}_p(V)p^{\deg_p V-3}\widetilde F.
\]
Let $H_j=\pp(V)$ at the end of this procedure. Substitution of the preceding formulas gives $\deg_p\widetilde F=3$ and $\deg_pH_j=2$.

The denominators, polynomial factors removed by these operations, and the leading coefficient of $\widetilde F$ have only the following possible nonconstant factors:
\[
T,\; 1+2T^2,\; 2T^2-3,\; 6T^2-1,\; 6T^2-4T-3.
\]
None vanishes for $0<T<1/\sqrt6$. Consequently every solution under consideration gives a common zero of $\widetilde F,H_1,H_2$.

For a cubic $a_3p^3+a_2p^2+a_1p+a_0$ and a quadratic $b_2p^2+b_1p+b_0$, define
\[
\begin{aligned}
\mathcal A&=a_1b_2^2-a_3b_0b_2-a_2b_1b_2+a_3b_1^2,\\
\mathcal B&=a_0b_2^2-a_2b_0b_2+a_3b_1b_0.
\end{aligned}
\]
Their resultant is the polynomial quotient
\[
\frac{b_0\mathcal A^2-b_1\mathcal A\mathcal B+b_2\mathcal B^2}{b_2^2}.
\]
Use this formula to define $R_j(T)=\Res_p(\widetilde F,H_j)$ for $j=1,2$. Thus both $R_j$ are determined by the rational functions already given. Up to a nonzero constant, the factorization of $R_1$ is
\begin{align*}
R_1={}&(T-1)^8(2T+1)^{12}(2T^2+1)^{12}(2T^2-3)^2(6T^2-1)^2\\
&\cdot(6T^2-4T-3)^{10}(2T^2+8T-1)^4\mathcal P(T)\mathcal E(T)^2.
\end{align*}
Here $\mathcal P$ and $\mathcal E$ are specified by their coefficients in ascending order. The coefficients of $\mathcal P$, from $T^0$ through $T^{18}$, are
\[
\begin{gathered}
(-243,2349,-6561,5229,140586,-485388,-1094108,\\
3265740,5130696,-10136384,-10261392,13062960,\\
8752864,-7766208,-4498752,334656,839808,601344,124416).
\end{gathered}
\]
The coefficients of $\mathcal E$, from $T^0$ through $T^{19}$, are
\[
\begin{gathered}
(27,-2430,8190,60228,26352,-164448,-3342112,-8861120,\\
22756256,54892992,-100341696,-121096320,264964864,\\
20152832,-294750720,214062080,-35962112,-18583040,\\
4288000,1664000).
\end{gathered}
\]
The Euclidean algorithm gives $\gcd(\mathcal E,R_2)=1$. All the other factors of $R_1$, apart from $2T^2+8T-1$ and $\mathcal P$, have no zero in $(0,1/\sqrt6)$. Thus a solution must satisfy one of these last two equations.

At the positive root of $2T^2+8T-1$, the polynomial $\widetilde F$ is proportional to
\[
(p-1)(p-9/7)(p-17/3).
\]
Only $p=9/7$ lies in $(1,2)$. It fails $G_1=0$: substitution in the rational expression for $G_1$ gives a numerator with a nonzero linear remainder modulo $2T^2+8T-1$, and a denominator that does not vanish. Since the positive root has degree two over $\mathbb Q$, this excludes the first possibility.

For the second possibility, form the Sturm sequence beginning with $\mathcal P,\mathcal P'$ and continuing by taking the negative remainder at each Euclidean division. Multiplication of a term by a positive constant does not change its signs. The variation counts at the following rational points are
\begin{center}
\begin{tabular}{crrrrrr}
\toprule
$T$ & $0$ & $0.143$ & $0.144$ & $0.295$ & $0.296$ & $0.409$\\
\midrule
Variations & $9$ & $9$ & $8$ & $8$ & $7$ & $7$\\
\bottomrule
\end{tabular}
\end{center}
Since $0.296<1/\sqrt6<0.409$, precisely two roots remain. For the pair $\widetilde F,H_1$, the Euclidean algorithm also gives
\[
\gcd(\mathcal P,b_2)=\gcd(\mathcal P,\mathcal A)=1.
\]
At either root, therefore, the last linear remainder yields $p=-\mathcal B/\mathcal A$. Substitution into $D_1$ gives a nonzero value at both roots, so $X=X_1/D_1$ and $Y=Y_1/D_1$.

It remains to apply positive face areas. Write
\[
h=\ph(p),\; u=\ph(q),\; v=\ph(r),\; w=\ph(s),
\; k=(1+h)/2.
\]
The vector $N=(n_1+n_2)/2$ has squared length $k$, and $N\cdot n_3=u$, $N\cdot n_4=v$. The equilibrium equation
\[
2S_1N+S_3n_3+S_4n_4=0
\]
gives the necessary inequality
\begin{equation}
\mathcal C:=uv-wk=\frac{(1-w^2)S_3S_4}{4S_1^2}>0.
\label{eq:reflectionpositive}
\end{equation}
Set
\[
Z=\frac{\alpha^2X-\beta^2Y}{4}=qr,\;
W=\frac{\alpha^2X+\beta^2Y}{2}=q^2+r^2.
\]
Then
\[
4\mathcal C=Z(9-3W+Z^2)-\ph(s)(2+3p-p^3),
\]
a rational expression in $T,p,X,Y$. Start with the two intervals for $T$ in the preceding table and bisect, retaining the unique root counted by the Sturm sequence, until the width is less than $10^{-70}$. Substitution in the formulas for $p,X,Y$ and $4\mathcal C$, using rational intervals, gives
\begin{center}
\begin{tabular}{ccc}
\toprule
Root interval for $T$ & Interval for $p$ & Interval for $4\mathcal C$\\
\midrule
$(0.143,0.144)$ & $(1.69702,1.69703)$ & $(-0.704,-0.703)$\\
$(0.295,0.296)$ & $(1.25214,1.25215)$ & $(-0.168,-0.167)$\\
\bottomrule
\end{tabular}
\end{center}
Both possibilities contradict \eqref{eq:reflectionpositive}. Thus $b\ne c$ is impossible, completing the algebraic step of Theorem~\ref{thm:reflection}.
\section{Verification of the computations}\label{sec:reproducibility}

The computer-assisted steps of the paper are checked by the following Python scripts, which are available as ancillary files with the arXiv version. They require only the Python standard library, and all proof-critical comparisons use exact rational arithmetic. Decimal calculations in \texttt{check\_T1.py} are used only to print approximations. Each script names the statements it checks, in the order in which they appear in the text. Polynomial identities in one or two variables are checked by evaluation at sufficiently many rational points, using an a priori bound for the degrees.

\begin{center}\small
\begin{tabular}{p{.33\textwidth}p{.59\textwidth}}\toprule
File&Content\\\midrule
\texttt{check\_section6.py}&the bounds \eqref{eq:contractionbounds} and \eqref{eq:intervalgeometry} for the three boxes, the localization \eqref{eq:sharpbox}, the edge bounds and the bound for $\partial f/\partial b$ (Section~\ref{sec:intervals})\\
\texttt{check\_T2.py}&the exact identities in the proof of Theorem~\ref{thm:T2} and the resultant \eqref{t2:eq:resultant}\\
\texttt{check\_T1.py}&the identities, the resultant \eqref{sym:eq:resultant}, the linear subresultant, the bounds of the lemma, the admissibility intervals and the decimal values in Theorem~\ref{sym:thm:classification}\\
\texttt{check\_galois\_T1.py}&the factorizations used in Proposition~\ref{prop:T1galois}\\
\texttt{check\_appendixA.py}&the $280$ Bernstein coefficients of Appendix~\ref{app:bernstein}\\
\texttt{check\_appendixB.py}&the elimination, the factorization of $R_1$, the Sturm counts and the final intervals of Appendix~\ref{app:reflection}\\
\texttt{morley\_tools.py}&polynomial arithmetic, rational intervals, determinants, Sturm sequences, factorization modulo a prime\\\bottomrule
\end{tabular}
\end{center}

The three contraction constants in \eqref{eq:contractionbounds} are less than $.061689$, $.059243$ and $.065397$. Among the $280$ Bernstein coefficients in Appendix~\ref{app:bernstein}, $28$ vanish and $252$ are positive.

\end{appendices}

\section*{Acknowledgements}
\phantomsection
\addcontentsline{toc}{section}{Acknowledgements}

The author thanks Tran Manh Dung and Nguyen Xuan Tho for their help with the computations. The AI assistants ChatGPT and Claude were used for exploratory computations, for checking algebra and in preparing the manuscript; the author takes full responsibility for the content.

\phantomsection

\begin{flushright}
Quang Hung Tran,\\
High School for Gifted Students,\\
Vietnam National University, Hanoi, Vietnam,\\
Email: \href{mailto:tranquanghung@hus.edu.vn}{tranquanghung@hus.edu.vn}\\
ORCID: \href{https://orcid.org/0000-0003-2468-4972}{0000-0003-2468-4972}
\end{flushright}

\end{document}